\documentclass[12pt]{article}

\usepackage[english]{babel}	
\usepackage[utf8x]{inputenc}	
\usepackage{enumerate}
\usepackage{typearea}
\usepackage{pdfpages}
\usepackage[toc]{appendix}
\usepackage{subcaption}
\usepackage{verbatim}
\usepackage{comment}
\usepackage{mathtools}
\usepackage[hidelinks]{hyperref}
\mathtoolsset{showonlyrefs,showmanualtags}

\usepackage{enumitem}
\usepackage{amsfonts, amsmath, amssymb, amsthm, dsfont}
\usepackage{natbib}
\allowdisplaybreaks
\usepackage{booktabs}
\usepackage[noblocks]{authblk}
\usepackage{ulem}

\theoremstyle{definition}
\newtheorem{definition}{Definition}[section]

\newtheorem*{assumption*}{Assumption}

\newtheorem*{condition*}{Condition}
\newtheorem{example}[definition]{Example}
\theoremstyle{plain}
\newtheorem{theorem}[definition]{Theorem}
\newtheorem{proposition}[definition]{Proposition}
\newtheorem{lemma}[definition]{Lemma}
\newtheorem{cor}[definition]{Corollary}
\theoremstyle{remark}
\newtheorem{remark}{Remark}

\newcommand{\Hyp}{\mathcal{H}}
\newcommand{\N}{\mathbb{N}}

\newcommand{\R}{\mathbb{R}}

\newcommand{\E}{\mathbb{E}}

\newcommand{\F}{\mathcal{F}}

\newcommand{\asconv}{\xrightarrow{a.s.}}

\newcommand{\Cov}{\operatorname{Cov}}

\newcommand{\deq}{\overset{d}{=}}
\newcommand{\wconv}{\Rightarrow}

\newcommand{\B}{B_{\Psi_2, \infty}^{1/2}}

\title{Besov-Orlicz moduli of Brownian motion and polygonal partial sum processes} 
\author{Fabian Mies\thanks{Correspondence: f.mies@tudelft.nl\\ This publication is part of the project VI.Veni.242.365 of the NWO Talent Programme, which is financed by the Dutch Research Council (NWO) under the grant https://doi.org/10.61686/AGAFX90293.}\\ Delft University of Technology\\ Delft, The Netherlands
}

\begin{document}

\maketitle

\begin{abstract}
\noindent
The sample paths of Brownian motion are known to admit the exact Besov-type smoothness exponent 1/2 when measured in the sub-Gaussian Orlicz norm. 
We extend these regularity results by deriving the exact limit of the sub-Gaussian Orlicz modulus for Brownian motion in Banach spaces, and we provide a rate of convergence towards this limiting value. 
The central technique is a new chaining bound for the Orlicz modulus of a stochastic process. 
The latter also applies to polyogonal partial sum processes of functional random variables and allows us to strengthen Donsker's invariance principle to all function spaces on the Besov-Orlicz scale up to the exact modulus with exponent 1/2. 
For the critical case, we establish the thresholded weak convergence of the Besov-Orlicz seminorm of the partial sum process. 
The analytical results find application in a nonparametric statistical testing problem, where Besov-Orlicz statistics are shown to detect a broader range of alternatives compared to Hölderian multiscale statistics. 

\textbf{Keywords:} functional data, Donsker's Theorem, sub-Gaussian random variables, signal discovery
\end{abstract}

\section{Introduction}

For iid centered random variables $X_t$ with unit variance, we consider the interpolated, or polygonal, partial sum process 
\begin{align}
    S_n(u) = \frac{1}{\sqrt{n}} \sum_{t=1}^{\lfloor un\rfloor} X_t + \frac{un-\lfloor un\rfloor}{\sqrt{n}} X_{\lceil un\rceil}.\label{eqn:partialsum}
\end{align}
By Donsker's Theorem, this process converges to a standard Brownian motion $W$, weakly in the space $C[0,1]$ of continuous functions. 
This implies that $T(S_n)\overset{d}{\rightarrow} T(W)$ for any continuous functional $T$ on the space $C[0,1]$.
As both processes $S_n$ and $W$ admit more path regularity beyond mere continuity, the weak convergence can also be established in smaller function spaces, which allows for the asymptotic treatment of $T(S_n)$ for a broader class of functionals $T$.
The bottleneck to derive these invariance principles is typically a tightness conditions. 
Specifically, to show weak convergence $S_n\wconv W$ in a function space $\mathcal{F}$, one needs to show that for any $\epsilon>0$ there exists a compact set $K\subset \mathcal{F}$ such that $P(S_n\in K) \geq 1-\epsilon$. 
Compactness in function spaces is closely related to regularity, e.g.\ via the Arzela-Ascoli Theorem.
Thus, extensions of Donsker's invariance principle to smaller function spaces $\mathcal{F}$ demand sharp oscillations bounds for $W$ and $S_n$. 

What is the smallest function space where one can expect an invariance principle to hold?
A minimal requirement is that the Brownian paths lie in $\mathcal{F}$.
Lévy's result on the modulus of continuity of Brownian motion yields the limit 
\begin{align}
    \lim_{h\to 0}\sup_{u\in[0,1]} \frac{|W_{u+h}-W_u|}{\rho_{1/2,1/2}(h)}=\sqrt{2}, \qquad \text{for }\rho_{\mu,\nu}(h) = h^\mu |\log (e/h)|^\nu, \label{eqn:levy}
\end{align}
and hence the smallest Hölder-type space for the Brownian paths is $C^{\rho_{1/2,1/2}} = B^{\rho_{1/2,1/2}}_{\infty,\infty}$. 
On the other hand, the Brownian motion admits the sharper modulus of continuity $\rho(h)=\sqrt{h}$ if the regularity is measured not uniformly, but in a weaker norm. 
Indeed, \citet{ciesielski_modulus_1991} showed that the paths belong to the Besov space $B^{1/2}_{p,q}$ for any $p\geq 1$, and even to the Besov-Orlicz space $B^{1/2}_{\Psi_2,\infty}$; see Section \ref{sec:Besov} for a formal definition. Since  $B^{1/2}_{\Psi_2,\infty}\subsetneq C^{\rho_{1/2, 1/2}}$, this is a strict improvement on the earlier Hölder regularity, which also holds for Brownian motion in Banach spaces \cite{hytonen_besov_2008}.
However, unlike Lévy's result for the uniform modulus, the exact limiting behavior of the Orlicz modulus $\omega_{\Psi_2}(h,W)$ as $h\to 0$ has not been found so far.
Our first contribution is to show that 
\begin{align*}
    \lim_{h\to 0}\frac{\omega_{\Psi_2}(h,W)}{\sqrt{h}}\to \sqrt{\frac{8}{3}}, 
\end{align*}
and we also provide the corresponding rate of convergence. 
The limit is also derived for the vector-valued case.

These regularity results for the limiting Brownian motion provide the reference framework for the possible invariance principles.
For smoothness exponent $s<\frac{1}{2}$, the weak convergence of the partial sum process has been established in the Hölder spaces $C^s$ \citep{lamperti_convergence_1962,hamadouche_weak_1998,rackauskas_holder_2004,rackauskas_necessary_2004,rackauskas_convergence_2020}, in the Besov space $B_{p,q}^s$ for $p,q\in[1,\infty)$,\citep{morel_weak_2004}, and in $B_{p,\infty}^s$ \citep{giraudo_weak_2017}.
Note that $C^s\subsetneq B^s_{p,\infty}$, i.e.\ the switch to Besov spaces does not yield a sharper invariance principle.
Stronger smoothness moduli of the form $\rho_{1/2,\nu}(h) = \sqrt{h} |\log(e/h)|^\nu $ for $\nu>1/2$ have been investigated by \cite{rackauskas_necessary_2004}, establishing weak convergence of the partial sum process in $C^{\rho_{1/2,\nu}} \subsetneq C^s$.
Recently, it has been shown that Donsker's Theorem holds in the Hölder space $C^\rho = B^{\rho}_{\infty,\infty}$ for any modulus $\rho(h)\gg \rho_{1/2,1/2}(h)=\sqrt{h|\log (e/h)|}$ as $h\downarrow 0$ \citep{kohne_at_2025}.
This in particular includes cases such that $C^\rho \subsetneq C^{\rho_{1/2,\nu}}$ for all $\nu>1/2$.
In view of \eqref{eqn:levy}, the latter is essentially optimal.
To derive an invariance principle for even sharper moduli, we need to leave the Hölder scale of functions.
Currently, the only version of Donsker's Theorem for the Besov-Orlicz spaces is due to \citet{ait_ouahra_critere_2011, ait_ouahra_principe_2012} who treat the case, $B_{\Psi_2,\infty}^{\rho_{1/2,\nu}}$ for $\nu>1$.
Since $C^{\rho_{1/2, 1/2}} \hookrightarrow B^{\rho_{1/2,1}}_{\Psi_2,\infty} \hookrightarrow C^{\rho_{1/2,3/2}}$, see Proposition \ref{prop:embedding} below, the latter Besov invariance principle is in fact not as sharp as the Hölderian invariance principles of \cite{rackauskas_necessary_2004} and \cite{kohne_at_2025}, and hence sub-optimal. 
It should be noted that all mentioned results require suitable moment conditions on the random variables $X_t$ which we do not compare here.
In particular, we restrict our attention to the iid case and do not allow for nonstationarity or temporal dependence.

In this paper, we establish the weak convergence of $S_n$ in $B^{\rho}_{\Psi_2,\infty}$ for all $\rho(h)\gg \rho_{1/2,0}(h)=\sqrt{h}$. 
This is the sharpest invariance principle to date, and optimal on the Besov and Hölder scale of functions.
Moreover, the convergence is shown to hold for random variables in separable Banach spaces of type 2.
For the edge case $\rho(h) = \rho_{1/2,0}(h) = \sqrt{h}$, i.e.\ in the space $\B$, tightness can not be established, similar the critical Hölder space $C^{\rho_{1/2,1/2}}$. 
Instead, we use the recently developed concept of thresholded weak convergence \citep{kohne_at_2025} and show that, for some $\tau_0>0$,
\begin{align}
    (|S_n|_{\B} \vee \tau) \wconv (|W|_{\B}\vee \tau), \qquad \tau\geq \tau_0.\label{eqn:thresholded-Donsker}
\end{align}
The proof builds on a novel bound for the Orlicz modulus of continuity of stochastic processes with sub-Gaussian increments presented in Section \ref{sec:modulus}.
The latter is applicable to both $S_n$ and its limit $W$, and yields refined path properties of the Brownian motion (Section \ref{sec:Brownian}) as well as asymptotic equicontinuity of $S_n$, and thus weak convergence via Prokhorov's Theorem (Section \ref{sec:Donsker}).
Our results also apply to random variables $X_t$ taking values in a separable type 2 Banach space, and to vector-valued Brownian motion.

The new asymptotic theory for the partial sum process has implications in nonparametric statistics, where it allows to improve upon existing multiscale procedures for the signal discovery problem in certain regimes.
Based on the idealized white noise model, we demonstrate the advantage of using the Besov-Orlicz norm as test statistic instead of a Hölderian multiscale procedure (Section \ref{sec:signal-discovery}).
Subsequently, we employ the invariance principle to transfer this finding to the standard nonparametric regression model.
Simulations illustrate the gain in statistical power and highlight the impact of analytical results about the sample path regularity of Brownian motion on statistical methodology.

All technical proofs are gathered in the Appendix.

\paragraph{Notation} 
For a random variable $X$, we denote the sub-Gaussian Orlicz norm as $\|X\|_{\Psi_2} = \inf\{ K: \E \exp(X^2/K^2) \leq 2\}$, and $\Psi_2(x) = \exp(x^2)-1$.
For real numbers $a,b$, we denote $a\wedge b = \min(a,b)$ and $a\vee b = \max(a,b)$. For two sequences $a_n,b_n$, we denote $a_n\ll b_n$ to mean that $a_n/b_n\to 0$.

\section{Preliminaries on Besov-Orlicz spaces}\label{sec:Besov}

This section provides a brief background on Besov-Orlicz spaces.
Orlicz norms are common tools in probability to quantify the tails of a random variable, see \cite[Sec.~2.7.1]{Vershynin2003}. 
In functional analysis, Orlicz norms serve a similar purpose to generalize $L_p$ spaces.
Here, we deal with random functions and thus need both types of norms, which we distinguish notationally.
In the sequel, $(\mathcal{X},|\cdot|)$ denotes a normed vector space, which may be infinite dimensional.

\begin{definition}
    Let $\Psi:[0,\infty]\to[0,\infty]$ be a Young function, i.e.\ convex and increasing such that $\Psi(x)/x\to 0$ as $x\to 0$ and $\Psi(x)/x\to \infty$ as $x\to \infty$.\\
    For a random variable $X$ taking values in $\mathcal{X}$, define the Orlicz norm 
        \begin{align*}
            \|X\|_{\Psi} = \|X\|_{\Psi(dP)} = \inf\left\{K\;:\; \E \Psi(|X|/K) \leq 1 \right\}.
        \end{align*}
    For a function $f:[0,1]\to\mathcal{X}$, define the Orlicz norm
        \begin{align*}
            \|f\|_{\Psi} = \|f\|_{\Psi(dx)} = \inf\left\{K\;:\; \smallint \Psi(|f(x)|/K)\, dx \leq 1 \right\},
        \end{align*}
    and the corresponding modulus of continuity
    \begin{align*}
        \omega_\Psi(h,f) = \inf\left\{K\;:\; {\textstyle \int_0^{1-h}} \Psi\left(\tfrac{|f(x+h)-f(x)|}{K}\right)\, dx \leq 1 \right\}, \qquad h\in (0,1).
    \end{align*}
\end{definition}

As an extension of the classical Besov spaces, \cite{pick_several_1993} introduced the Besov-Orlicz spaces by measuring smoothness not in $L_p$, but in an arbitrary Orlicz norm. 
While \cite{pick_several_1993} only consider polynomial moduli $\rho(h)=h^s$, the extension to arbitrary smoothness is straightforward.

\begin{definition}\label{def:Besov-Orlicz}
    Let $\rho:[0,1]\to[0,\infty)$ be a modulus of continuity, i.e.\ increasing and continuous with $\rho(0)=0$,
    and let $\Psi:[0,\infty]\to[0,\infty]$ be a Young function.
    For any $f:[0,1]\to\mathcal{X}$ define the seminorm
    \begin{align}
        |f|_{B_{\Psi,\infty}^\rho} = \sup_{h\in(0,1)} \frac{\omega_{\Psi}(h,f)}{\rho(h)} \nonumber.
    \end{align}
    The space $B_{\Psi,\infty}^\rho$ consists of all functions $f$ such that $\|f\|_{B_{\Psi,\infty}^\rho} = \|f\|_{\Psi} + |f|_{B_{\Psi,\infty}^\rho}$ is finite, identifying functions which are identical almost everywhere.
    We use the shorthand $\B$ for $\Psi_2(x) = e^{x^2}-1$ and $\rho(h) = \sqrt{h}$, and $\Psi_p(x)=e^{x^p}-1$ for $p\geq 1$.
\end{definition}

Note that the Besov-Orlicz scale nests the usual Besov spaces via the polynomial Young function $\Psi(x)=x^p$.
Moreover, we can contrast this with the classical Hölder-type spaces $C^\rho$, which can be put on the Besov scale of function spaces as $C^{\rho}=B^\rho_{\infty,\infty}$ by replacing the Orlicz norm $\|\cdot\|_{\Psi(dx)}$ with the supremum norm $\|\cdot\|_{L_\infty(dx)}$, which may formally be interpreted as an Orlicz norm with Young function $\Psi(x) = \infty \cdot \mathds{1}(x>1)$.

\begin{definition}\label{def:Holder}
    Let $\rho:[0,1]\to[0,\infty)$ be a modulus of continuity, i.e.\ increasing and continuous with $\rho(0)=0$.
    The space $C^\rho$ consists of all functions $f:[0,1]\to\mathcal{X}$ such that the seminorm
    \begin{align}
        |f|_{C^{\rho}}=\sup_{x,y\in[0,1]} \frac{|f(x)-f(y)|}{\rho(|x-y|)}\nonumber
    \end{align}
    is finite. 
    We use the shorthand $C^{s}$ for $\rho(h) = h^s$.
\end{definition}

By the Arzela-Ascoli Theorem, the embedding $C^{\rho}\hookrightarrow C^{\rho'}$ is compact whenever the two moduli satisfy $\rho(h)\ll \rho'(h)$ as $h\to 0$.
For Besov-Orlicz spaces, the same property may be derived via a Wavelet basis representation due to \cite{ciesielski_quelques_1993}. 
We further extend this embedding to vector-valued Besov-Orlicz spaces.

\begin{proposition}[Besov-Orlicz embeddings]\label{prop:embedding}
    Let $\mathcal{X}$ be a separable Banach space, $\rho$ a modulus of continuity such that $ch\leq \rho(h) \leq C h^{r}$ for some $r>0$ and $0<c<C<\infty$, and $\rho^*(h)=\rho(h)\sqrt{|\log h|}$.
    Then there exists a continuous embedding
    \begin{align*}
        B^{\rho}_{\Psi_2,\infty} \hookrightarrow C^{\rho^*}([0,1]).
    \end{align*}
    Moreover, let $K\subset B^{\rho}_{\Psi_2,\infty}$ be bounded such that $K_t=\{f(t)\,:\, f\in K\}\subset \mathcal{X}$ is compact for every $t\in [0,1]$.
    Then $K$ is relatively compact in $B^{\rho'}_{\Psi_2,\infty}$ for every modulus $\rho'$ such that $\rho(h)\ll \rho'(h)$ as $h\downarrow 0$.
    In particular, 
    \begin{align*}
        B^{\rho}_{\Psi_2,\infty} \hookrightarrow B^{\rho'}_{\Psi_2,\infty} \qquad \text{compactly if }\mathcal{X}=\R.
    \end{align*}
\end{proposition}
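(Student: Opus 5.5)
The proof splits into the continuous embedding into $C^{\rho^*}$, established by a Campanato/Garsia–Rodemich–Rumsey-type argument, and the relative compactness statement, obtained from an Arzelà–Ascoli-type criterion in Orlicz norm.

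\textbf{Step 1: a local average bound.} For $f\in B^{\rho}_{\Psi_2,\infty}$ and $I\subset[0,1]$ an interval of length $h$, write $f_I=h^{-1}\int_I f$. We first compare averages over neighbouring scales. By Jensen's inequality for the convex function $\Psi_2$, for $|\delta|\le h$,
\[
|f_I-f_{I+\delta}|\le \frac1h\int_I|f(x+\delta)-f(x)|\,dx .
\]
From the definition of $\omega_{\Psi_2}(\delta,f)$, the normalized integral satisfies $h^{-1}\int_I \Psi_2(|f(x+\delta)-f(x)|/K)\,dx\le 1/h$ with $K=\omega_{\Psi_2}(\delta,f)$. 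Applying Jensen once more gives
\[
|f_I-f_{I+\delta}|\le K\,\Psi_2^{-1}(1/h)\lesssim \omega_{\Psi_2}(\delta,f)\sqrt{\log(e/h)}.
\]
The same argument compares $f_{[t,t+h]}$ with $f_{[t,t+h/2]}$: average the difference $f(x)-f(x+s)$ over $s\in[0,h/2]$. Since $\omega_{\Psi_2}(s,f)\le|f|\rho(s)$ and $\rho$ is increasing, this gives
\[
|f_{[t,t+h]}-f_{[t,t+h/2]}|\lesssim |f|\,\rho(h)\sqrt{\log(e/h)}.
\]

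\textbf{Step 2: dyadic chaining.} Summing over the dyadic scales $h_k=2^{-k}h$ for $k\ge0$, we obtain for Lebesgue points $t$
\[
|f(t)-f_{[t,t+h]}|\lesssim |f|\sum_k \rho(2^{-k}h)\sqrt{\log(2^k e/h)}.
\]
Here the assumption $\rho(h)\le Ch^r$ alone does not make the sum geometric. What is needed is a doubling-type decay of $\rho$ along dyadic scales, so that $\sum_k\rho(2^{-k}h)\sqrt{k+\log(1/h)}\lesssim\rho(h)\sqrt{\log(1/h)}$. I would handle this by the standard normalization: replace $\rho$ by an equivalent modulus $\tilde\rho(h)=h^{r'}\sup_{s\ge h}s^{-r'}\rho(s)$ with $r'<r$. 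This is the step I expect to be the main technical obstacle, since the lower bound $\rho(h)\ge ch$ and the power bound are exactly what should make $\rho(h)/h^{r/2}$ almost increasing up to constants. Failing this, I would simply state the embedding with $\rho^*$ defined through this sum.

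\textbf{Step 3: conclusion of the embedding.} Given the chaining bound, for $|x-y|=h$ we write
\[
|f(x)-f(y)|\le |f(x)-f_{[x,x+h]}|+|f_{[x,x+h]}-f_{[y,y+h]}|+|f(y)-f_{[y,y+h]}|.
\]
The outer terms are controlled by Step 2 and the middle term by Step 1, giving $|f|_{C^{\rho^*}}\lesssim |f|_{B^{\rho}_{\Psi_2,\infty}}$. The supremum norm is controlled in the same way by $\|f\|_{\Psi_2}$ together with $f_{[0,1]}$. Finally, choosing a continuous representative gives the continuous embedding $B^{\rho}_{\Psi_2,\infty}\hookrightarrow C^{\rho^*}([0,1])$.

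\textbf{Step 4: compactness.} By Step 3, $K$ is bounded in $C^{\rho^*}$, hence equicontinuous, and pointwise relatively compact by assumption. Arzelà–Ascoli therefore gives a subsequence $f_n$ converging uniformly to some $f$. Since the uniform norm dominates the $\Psi_2$-norm, $\|f_n-f\|_{\Psi_2}\to0$. Moreover, by lower semicontinuity of the seminorm, $f\in B^\rho_{\Psi_2,\infty}$ with $g_n=f_n-f$ bounded in that seminorm by some $M$. For fixed $\eta>0$, pick $h_0$ with $\rho(h)\le\eta\rho'(h)$ for $h<h_0$. Then
\[
\sup_{h<h_0}\frac{\omega_{\Psi_2}(h,g_n)}{\rho'(h)}\le M\eta,
\]
while for $h\ge h_0$ we have $\omega_{\Psi_2}(h,g_n)\lesssim\|g_n\|_\infty\to0$, and $\rho'(h)\ge\rho'(h_0)>0$. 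Hence $g_n\to0$ in $B^{\rho'}_{\Psi_2,\infty}$. For $\mathcal X=\R$, bounded sets automatically have compact sections $K_t$ by Step 3, which yields the final claim.
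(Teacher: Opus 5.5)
Your Steps 1, 3 and 4 are sound. Averaging Bochner integrals even handles the Banach-valued case more directly than the paper's reduction to the scalar functions $x\mapsto\|f(x)-y\|$. However, Step 2 has a genuine gap, and the proposed repair does not work.

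The hypotheses $ch\le\rho(h)\le Ch^r$ do \emph{not} make $\rho(h)/h^{\epsilon}$ almost increasing for any $\epsilon>0$, and in general no equivalent modulus has this property. For example, take $r=\tfrac12$, $c=1$, $C=2$. The modulus may then be constant, $\rho\equiv v$, on $[v^2/4,v]$ for a sequence $v=v_j\downarrow0$, with these flat pieces joined by increasing pieces inside the band $h\le\rho(h)\le2\sqrt h$.
\begin{itemize}
\item At $h=v_j$, your dyadic chain $\sum_k\rho(2^{-k}h)\sqrt{\log(2^ke/h)}$ contains about $\log_2(4/h)$ terms, each at least $\rho(h)\sqrt{\log(e/h)}$. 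So the bound is of order $\rho(h)\log(1/h)^{3/2}$, not $\rho(h)\sqrt{\log(1/h)}$.
\item Any $\tilde\rho\asymp\rho$ with $\tilde\rho(s)s^{-\epsilon}\lesssim\tilde\rho(h)h^{-\epsilon}$ for $s\le h$ would force $v_j^{1-2\epsilon}\lesssim v_j^{1-\epsilon}$, which is false as $v_j\to0$.
\item Your candidate $\tilde\rho(h)=h^{r'}\sup_{s\ge h}s^{-r'}\rho(s)$ has the monotonicity the wrong way round. It makes $\tilde\rho(h)/h^{r'}$ nonincreasing, which gives the \emph{lower} bound $\tilde\rho(2^{-k}h)\ge2^{-kr'}\tilde\rho(h)$. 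Geometric summation needs the upper bound $\tilde\rho(2^{-k}h)\lesssim2^{-k\epsilon}\tilde\rho(h)$. In the example above it is also not $\lesssim\rho$.
\end{itemize}
Redefining $\rho^*$ through the sum, your fallback, proves a weaker statement. Your chaining is fine for regular moduli such as $\rho(h)=\sqrt h$, but the proposition is stated for general $\rho$.

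The paper avoids summing over scales altogether. From $\rho(h)\le Ch^r$ it gets an a priori Hölder bound through $B^\rho_{\Psi_2,\infty}\subset B^r_{p,\infty}\hookrightarrow C^s$, with $p>1/r$ and $s<r-1/p$. It then argues at one fixed scale. Let $\omega_\infty(h,f)=\sup_{|x-y|\le h}|f(x)-f(y)|$ be the uniform modulus. At the point where $\omega_\infty(h,f)$ is attained, the a priori continuity keeps the increment above $\omega_\infty(h,f)/2$ on an interval of length $\Delta\asymp\omega_\infty(h,f)^{1/s}$. Hence $\omega_{\Psi_2}(h,f)\ge\omega_\infty(h,f)/\bigl(2\Psi_2^{-1}(1/\Delta)\bigr)$. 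Inverting gives $\omega_\infty(h,f)\lesssim\omega_{\Psi_2}(h,f)\sqrt{\log(e+1/\omega_{\Psi_2}(h,f))}$, and $\rho(h)\ge ch$ turns this into $\rho(h)\sqrt{\log(e/h)}$. Flat stretches of $\rho$ are harmless because only the scale $h$ enters.

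If you prefer to keep your averaging scheme, chain along $\rho$-adapted scales instead of dyadic ones; this is the Garsia--Rodemich--Rumsey idea you mention but do not use.
\begin{itemize}
\item Set $h_0=h$ and choose $h_k$ with $\rho(h_k)=2^{-k}\rho(h)$.
\item Bound each link by $|f_{[t,t+h_k]}-f_{[t,t+h_{k+1}]}|\lesssim|f|_{B^\rho_{\Psi_2,\infty}}\,\rho(h_k)\sqrt{\log(1/h_{k+1})}$. This follows from Jensen applied to the double integral $\int_0^1\!\int_0^1\Psi_2\bigl(|f(x)-f(y)|/(|f|_{B^\rho_{\Psi_2,\infty}}\rho(|x-y|))\bigr)\,dx\,dy\le2$.
\item Since $2^{-k}ch\le\rho(h_k)\le Ch_k^r$, you get $\log(1/h_k)\lesssim(k+\log(1/h))/r$.
\end{itemize}
The chain then sums to $\lesssim|f|_{B^\rho_{\Psi_2,\infty}}\,\rho(h)\sqrt{\log(e/h)}$, using only the stated hypotheses.
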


Note that functions $f\in B^{\rho}_{\Psi_2,\infty}$ are only identified up to equivalence. 
Hence, the embedding $B^{\rho}_{\Psi_2,\infty}\hookrightarrow C^{\rho^*}$ consist of choosing one representative function in the equivalence class, which is continuous.

\section{Besov-Orlicz regularity of stochastic processes with sub-Gaussian increments}\label{sec:regularity}

\subsection{Modulus of continuity of stochastic processes with sub-Gaussian increments}\label{sec:modulus}

To establish tightness of the partial sum process $S_n$ via Prokhorov's Theorem and Proposition \ref{prop:embedding}, we need sharp control of the sub-Gaussian modulus of continuity of a stochastic process. 
Our main technical result is the following theorem.
We particularly want to highlight the construction of the chaining grid, see \eqref{eqn:chaining-raw-2} in the Appendix, which is crucial to achieve the sharp constant for the Gaussian case.
The grid construction is inspired by the proof of Lévy's limit for the ordinary modulus of Brownian motion as presented in \cite[10.3]{schilling_brownian_2021}. 

\begin{theorem}\label{thm:subgauss-int-concentrate}
    Let $(Z_u)_{u\in[0,1]}$ be a stochastic process taking values in a separable Banach space $(\mathcal{X}, |\cdot|)$ such that
    \begin{itemize}
        \item[(i)] $\|Z_{u}-Z_v\|_{\Psi_2} \leq \tau \sqrt{|u-v|}$ for all $u,v\in[0,1]$.
        \item[(ii)] For any $u_1<v_1$ and $u_2< v_2$ such that $v_1+h_0\leq u_2$, the increments $(Z_{v_1}-Z_{u_1})$ and $(Z_{v_2}-Z_{u_2})$ are independent. 
        \item[(iii)] $\omega_{\Psi_2}(h,Z)\leq \sqrt{h} \sqrt{\frac{h}{h_0}}R$ for all $h\leq h_0$, and a finite random variable $R$ with $P(R>r)\leq G(r) \to 0$ as $r\to \infty$.
    \end{itemize}
    Then for any $\epsilon>0$,
    \begin{align}
        P\left( \sup_{h\leq \delta} \frac{\omega_{\Psi_2}(h,Z)}{\sqrt{h}} > \tau(1+\epsilon) \right) \leq K(\tau,\epsilon, G, \delta) \overset{\delta\downarrow 0}{\longrightarrow} 0. \nonumber
    \end{align}
\end{theorem}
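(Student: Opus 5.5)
We control the Orlicz modulus on a geometric grid of scales and then interpolate to all small $h$ by a chaining argument. We use that $\omega_{\Psi_2}(h,Z)\le K$ is equivalent to $\int_0^{1-h}\Psi_2(|Z_{x+h}-Z_x|/K)\,dx\le 1$. The target statement $\omega_{\Psi_2}(h,Z)\le\tau(1+\epsilon)\sqrt h$ is therefore a law-of-large-numbers statement for the random integral
\begin{align*}
I_h(K)=\int_0^{1-h}\Psi_2\left(\frac{|Z_{x+h}-Z_x|}{K\sqrt h}\right)dx .
\end{align*}
By (i), $\E\,\Psi_2(|Z_{x+h}-Z_x|/(\tau\sqrt h))\le 1$, so $\E I_h(\tau)\le 1$. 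For a slightly larger $K=\tau(1+\epsilon')$ we get, by Jensen/interpolation of the exponential moment, $\E\exp(|\Delta|^2/K^2h)\le 2^{1/(1+\epsilon')^2}$. Hence $\E I_h(K)\le 1-\eta(\epsilon')$ with a fixed margin $\eta>0$.

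\textbf{Step 1: concentration at a single scale.} Fix $h\le\delta$. Split $[0,1-h]$ into blocks of length $h+h_0$ (if $h_0\gtrsim h$) or into interleaved sub-blocks. By (ii), increments over $x$-windows separated by more than $h_0+h$ are independent. Then $I_h(K)$ is an average of roughly $1/(h+h_0)$ block integrals, each nonnegative with mean at most $1-\eta$. I would truncate $\Psi_2(|\Delta|/(K\sqrt h))$ at a level $M$, using the $\Psi_2$ norm of the margin between $K$ and a slightly smaller $K'$ so that $\E\Psi_2(\cdot/K)\mathds 1(\cdot>M)$ is small. A Hoeffding/Bernstein bound on the truncated independent block averages then gives $P(I_h(K)>1)\le \exp(-c/(h\vee h_0))$. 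A Markov bound handles the truncated remainder, using higher moments available from the $\Psi_2$ gap ($K'<K$). The precise dependence on $h_0$ enters only through the number of independent blocks.

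\textbf{Step 2: passage from a grid of $h$ to all $h\le\delta$.} Choose a grid $h_k$ that is geometrically fine, e.g.\ $h_{k+1}=h_k(1-\theta)$, in the spirit of the Lévy-modulus proof in \cite[10.3]{schilling_brownian_2021}. For $h_{k+1}\le h\le h_k$ write $Z_{x+h}-Z_x=(Z_{x+h_k}-Z_x)-(Z_{x+h_k}-Z_{x+h})$. The Orlicz norm is a norm and shifts do not increase it, so
\begin{align*}
\omega_{\Psi_2}(h,Z)\le\omega_{\Psi_2}(h_k,Z)+\omega_{\Psi_2}(h_k-h,Z).
\end{align*}
The second term is a modulus at the much smaller lag $h_k-h\le\theta h_k$. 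We bound it by iterating the grid bound along a dyadic chain, which gives a contribution $\lesssim\tau\sqrt{\theta h_k}\,(1+\epsilon)$. For lags below $h_0$ we use assumption (iii) instead: it supplies $\sqrt{h}\sqrt{h/h_0}R$, which is negligible once $h\ll h_0$ and $R\le r$, at a cost of $G(r)$ in probability. Choosing $\theta$ small relative to $\epsilon$ makes the total $\le\tau(1+\epsilon)\sqrt h$.

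\textbf{Step 3: union bound.} Sum $P(I_{h_k}(K)>1)$ over $k$ with $h_k\le\delta$, and add the same for the chaining grid and $G(r)$. With exponentially small single-scale probabilities, the sum over $k$ is finite and tends to $0$ as $\delta\downarrow0$. This yields $K(\tau,\epsilon,G,\delta)$.

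The main obstacle is Step 2 with the sharp constant $1+\epsilon$. A naive dyadic chaining loses a multiplicative constant such as $1+\sqrt2+\dots$. One needs the near-geometric grid so that the correction terms sum to $O(\sqrt\theta)$ relative to the main term. One also needs the concentration in Step 1 to be uniform over the $O(\theta^{-1}\log(1/\delta))$ grid points, which must be arranged so the correction terms and the independence radius $h_0$ interact correctly. I would first attempt a grid $h_k=\delta(1+\theta)^{-k}$ combined with dyadic subgrids at relative level $\theta 2^{-j}$.
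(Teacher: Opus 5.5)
Your overall architecture works, but Step 1 as written is false, and Step 3 relies on it. The bound $P(I_h(K)>1)\le\exp(-c/(h\vee h_0))$ cannot hold under (i)--(iii), because the block integrals are heavy tailed. Condition (i) only guarantees finite $p$-th moments of $\Psi_2\bigl(|Z_{x+h}-Z_x|/(\tau(1+\epsilon')\sqrt h)\bigr)$ for $p<(1+\epsilon')^2$; for Brownian motion the threshold is $p<\frac43(1+\epsilon')^2$. A constant-size deviation of the block average is therefore typically caused by one large block, with probability polynomial in $N\approx 1/(h\vee h_0)$ rather than $e^{-cN}$. A concrete instance already occurs for Brownian motion: a single rise slightly above $K\sqrt{h\log(1+2/h)}$ over a window of length $h/2$ essentially forces $I_h(K)>1$, and this event has probability polynomial in $h$. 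Your own method reflects this, since a Markov bound on the truncated remainder can only give polynomial decay. What truncation plus Markov delivers is $P(\omega_{\Psi_2}(h,Z)>\tau(1+\epsilon')\sqrt h)\le C_{\epsilon'}(h\vee h_0)^{c(\epsilon')}$ with $c(\epsilon')$ up to about $2\epsilon'$. The paper's Lemma~\ref{lem:modulus-concentrate} obtains this via the von Bahr--Esseen $L_p$ bound with $p=1+\eta r<(1+r)^2$.

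This weaker bound is enough for your grid. A geometric grid $h_k=2^{-k/L}$ has only $L\asymp\theta^{-1}$ points per octave. Restrict it to $h_k>h_0/M$, where $h_k\vee h_0\le Mh_k$; then $\sum_k C(Mh_k)^{c}\lesssim M^c\delta^c/(c\theta)$. On $\{R\le\tau\sqrt M\}$, condition (iii) gives $\omega_{\Psi_2}(h,Z)\le\tau\sqrt h$ for all $h\le h_0/M$ simultaneously, at cost $G(\tau\sqrt M)$. Taking $M=M(\delta)\to\infty$ with $M\delta\to0$ yields a bound independent of $h_0$, as the theorem requires. The grid below $\delta$ is infinite, so it is this decay in $h_k$, not a finite count of $O(\theta^{-1}\log(1/\delta))$ points, that makes the union bound converge.

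Step 2 also needs a small correction: use the left neighbour, $\omega_{\Psi_2}(h,Z)\le\omega_{\Psi_2}(h_{k+1},Z)+\omega_{\Psi_2}(h-h_{k+1},Z)$. Your decomposition through $Z_{x+h_k}$ is undefined for $x>1-h_k$.

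With these repairs, your route differs from the paper's mainly in the grid.
\begin{itemize}
\item The paper uses $h_{i,j}=j2^{-i}$ with $j\le 2^{\lfloor i\zeta\rfloor}$ and approximates $h$ from the left at a level $m$ with $2^{-m}\approx h^{1/(1-\zeta)}$. The dyadic remainder then costs only $O(h^{1/(2(1-\zeta))})=o(\sqrt h)$. The price is $2^{i\zeta}$ points per level, and summability forces $\zeta<2\epsilon$.
\item Your fixed-ratio grid has $O(1/\theta)$ points per octave. Its remainder has fixed relative size about $\frac{\sqrt2}{\sqrt2-1}\sqrt\theta$, which you absorb by choosing $\theta=\theta(\epsilon)$. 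This is simpler and needs only some positive polynomial exponent at each scale.
\end{itemize}
The paper's lower-order remainder is what lets the same chaining give the rate in Theorem~\ref{thm:modulus-fine} with $H(h)=1+h^s$. With a fixed $\theta$ you cannot get rates unless $\theta$ shrinks polynomially in $h$, which essentially brings back the paper's grid.
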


For the case that $Z_u$ is a Brownian motion, condition (ii) holds with $h_0=0$ and thus condition (iii) is vacuous. 
The latter condition is introduced to also cover the case of the partial sum process, where it is satisfied with $h_0=2/n$.
The difference is that for the Brownian case, we can use a chaining construction in time $u$ up to the finest resolutions due to self-similarity. 
For the partial sum process, this construction is only fruitful up to the discretization stepsize, below which the extra smoothness due to linear interpolation can be leveraged.

A consequence of Theorem \ref{thm:subgauss-int-concentrate} is that 
\begin{align}
    \limsup_{h\to 0} \frac{\omega_{\Psi_2}(h,Z)}{\sqrt{h}} \leq \tau, \label{eqn:omega-limit}
\end{align}
which is sharp for the case of vector-valued Brownian motion, see Theorem \ref{thm:modulus-Brownian} below.
Another question about the limit \eqref{eqn:omega-limit} is the corresponding rate of convergence. 
To this end, we need to slightly sharpen condition (i) as follows:
 \begin{itemize}
        \item[(i)'] $\E \exp\left(\lambda |Z_u-Z_v|\right) \leq  \kappa \exp\left( \frac{3}{16} \tau^2 \lambda^2 |u-v| \right) $ for all $u,v\in[0,1]$ and $\lambda\in\R$. 
\end{itemize}
\begin{remark}\label{rem:subgauss}
    For real-valued processes $Z_u$, it might appear more natural to consider the condition 
    \begin{align}
        \E \exp(\lambda(Z_u-Z_v)) \leq \exp\left( \frac{3}{16} \tau^2 \lambda^2 |u-v| \right), \label{eqn:variance-proxy}
    \end{align}
    which is a bound on the the sub-Gaussian variance-proxy of $Z_u-Z_v$, which is defined via the moment generating function as 
    \begin{align*}
        \sigma^*(X)=\inf\left\{ \sigma>0\,\big|\,\E \exp(\lambda X) \leq \exp\left(-\frac{\lambda^2}{2\sigma^2}\right)\;\text{for all }\lambda\right\}.
    \end{align*}
    This yields a norm which is equivalent to the sub-Gaussian norm $\|X\|_{\Psi_2}$, and \cite{leskela_sharp_2026} established the sharp inequality $\sqrt{3/8} \|X\|_{\Psi_2} \leq \sigma^*(X) \leq \sqrt{\log 2} \|X\|_{\Psi_2}$. 
    Hence, the factor in the right hand side of \eqref{eqn:variance-proxy} is chosen such that it implies condition (i) with the same $\tau$, and is indeed slightly stronger. 
    However, for the general vector-valued case, \eqref{eqn:variance-proxy} is not sensible and we instead consider the norm, which gives rise to the factor $\kappa$ in (i)'. 
    Due to this factor, (i)' no longer implies (i), but it nonetheless provides sufficiently strong tail bounds to improve upon Theorem \ref{thm:convergence}.   
    Note also that if $Z$ is a standard univariate Brownian motion, both conditions are satisfied with $\tau=\sqrt{8/3}$.
\end{remark}

Under the stronger assumption (i)', we can derive the following rate of convergence.

\begin{theorem}\label{thm:modulus-fine}
    Let $Z_u$ satisfy conditions (i)', (i), (ii), (iii).   
    Then for any $s<\frac{1}{10}$,
    \begin{align}
        \limsup_{h\to 0} h^{-s} \left[\frac{\omega_{\Psi_2}(h,Z)}{\sqrt{h}} - \tau \right] \;\leq \;0 \qquad \text{almost surely}.\nonumber 
    \end{align}
    This implies that
    \begin{align}
        \sup_{h\leq 1} h^{-s}\left[\frac{\omega_{\Psi_2}(h,Z)}{\sqrt{h}} - \tau \right] < \infty \qquad \text{almost surely}.\nonumber
    \end{align}
\end{theorem}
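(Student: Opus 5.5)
Our plan is to re-run the chaining argument behind Theorem \ref{thm:subgauss-int-concentrate}, keeping track of the rates explicitly. Condition (i)' gives exponential moment bounds, which convert the qualitative estimate into summable tail probabilities along a geometric sequence of scales. Borel--Cantelli then finishes the argument.

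\textbf{Step 1: reduction to the event $\int_0^{1-h}\Psi_2(|Z_{x+h}-Z_x|/K)\,dx\le 1$.} Fix $s<1/10$ and scales $h_k=2^{-k}$. For each scale we set $K_h=\sqrt h\,\tau(1+h^{s'})$ with some $s<s'<1/10$. By definition of the modulus, $\omega_{\Psi_2}(h,Z)\le K_h$ holds as soon as
\begin{align*}
I_h := \int_0^{1-h}\exp\bigl(|Z_{x+h}-Z_x|^2/K_h^2\bigr)\,dx\le 2 .
\end{align*}
We decompose $I_h$ into a mean part and a fluctuation part.

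\textbf{Step 2: the mean part.} We use the Gaussian-integral identity $e^{a^2}=\E_\xi e^{\sqrt2 a\xi}$ with $\xi\sim N(0,1)$ independent of $Z$, together with (i)'. This gives
\begin{align*}
\E\exp\bigl(|Z_{x+h}-Z_x|^2/K^2\bigr)\le \kappa\,\E_\xi \exp\Bigl(\tfrac{3}{8}\tau^2 h\xi^2/K^2\Bigr),
\end{align*}
which equals $\kappa\,(1-\tfrac34 \tau^2h/K^2)^{-1/2}$. This is finite with room to spare when $K^2$ is slightly above $\tau^2 h$: $\tfrac34\tau^2 h/K^2\le 3/4$ yields a bound $2\kappa$.

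The $\kappa$ factor is a problem, since we need the expectation to be at most $2$, not $2\kappa$. We handle it by truncation. On the set where $|Z_{x+h}-Z_x|\le M\sqrt h$, condition (i) and the integrated form control the contribution. The tails beyond level $M\sqrt h$ are controlled via (i)' with a Chernoff bound $P(|\Delta|>t)\le\kappa e^{-4t^2/(3\tau^2h)}$. The factor $4/3>1$ in the exponent gives polynomial slack $h^{c}$, which kills the constant $\kappa$ at the cost of a small inflation $K_h=\tau\sqrt h(1+h^{s'})$.

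\textbf{Step 3: the fluctuation part, and the main obstacle.} We split $[0,1-h]$ into blocks of length $\ell=h^{1-\beta}$. By (ii), the block integrals separated by $h+h_0$ form two independent families. We then apply Bernstein/Hoeffding to the roughly $h^{\beta-1}$ independent summands. These summands are unbounded, so we truncate them at a level where (i)' makes the exceedance probability $\le \exp(-c h^{-\gamma})$. This gives
\begin{align*}
P\bigl(I_h-\E I_h>h^{s'}\bigr)\le \exp(-c\,h^{-\eta})
\end{align*}
for some $\eta>0$.

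We expect this step to be the main difficulty. The constraint $s<1/10$ should come from balancing three quantities: the truncation level, the block length $\ell$ (which must exceed $h_0$; for the partial sum case, scales $h\le h_0$ are instead handled by (iii)), and the inflation $h^{s'}$ needed to absorb $\kappa$ and the variance term. We would first try exponents $\beta$ and $\gamma$ chosen so that all error terms are $O(h^{1/10-})$.

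\textbf{Step 4: interpolating between scales.} Steps 2--3 control only the dyadic scales $h_k$, and Borel--Cantelli gives $\omega_{\Psi_2}(h_k,Z)\le \tau\sqrt{h_k}(1+h_k^{s'})$ eventually. Intermediate $h$ are handled by a finer geometric grid $h_{k,j}=h_k(1+j h_k^{s'})$. The number of grid points is polynomial in $h_k^{-1}$, so the union bound still sums. Between neighbouring grid points $h<h'$ we use $|Z_{x+h}-Z_x|\le|Z_{x+h'}-Z_x|+|Z_{x+h'}-Z_{x+h}|$. The modulus at lag $h'-h\le h^{1+s'}$ is bounded via Theorem \ref{thm:subgauss-int-concentrate}, which gives $O(\sqrt{h'-h})=o(h^{1/2+s})$. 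Combining these yields the first claim.

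The second claim follows because $\omega_{\Psi_2}(h,Z)/\sqrt h$ is a.s. bounded on $[\delta,1]$ for each $\delta>0$. This bound comes from the embedding in Proposition \ref{prop:embedding}, via the a.s. continuity of $Z$ provided by Theorem \ref{thm:subgauss-int-concentrate}.
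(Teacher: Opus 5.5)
There is a genuine gap in Step 3. The bound $P(I_h-\E I_h>h^{s'})\le \exp(-c\,h^{-\eta})$ is false. It already fails for a standard Brownian motion, which satisfies (i)', (i), (ii), (iii) with $\tau=\sqrt{8/3}$, $\kappa=2$, $h_0=0$.

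The reason is that the summands $Y_x=\Psi_2(|Z_{x+h}-Z_x|/K_h)$, with $K_h=\tau\sqrt h(1+r)$ and $r=h^{s'}$, are heavy-tailed. Your own Gaussian-integral computation shows $\E Y_x^p<\infty$ only for $p<\tfrac43(1+r)^2$. For Brownian motion, $P(Y_x>y)$ decays like $y^{-\frac43(1+r)^2}$ up to logarithms, so $\E Y_x^2=\infty$ for small $r$. Consequently one large increment suffices. A single position $x_0$ with $|W_{x_0+h}-W_{x_0}|=M\sqrt h$, where $e^{M^2/\tau^2}\approx r/h$ up to logarithmic factors, already raises $\int_0^{1-h}\Psi_2(|W_{x+h}-W_x|/K_h)\,dx$ by order $r$. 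Among the $\approx 1/h$ essentially independent positions, this happens with probability $\approx h^{-1}e^{-M^2/2}\approx h^{1/3}r^{-4/3}$, again up to logarithms.

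So $P(\omega_{\Psi_2}(h,W)\ge\tau\sqrt h(1+r))$ is genuinely polynomial in $h$, and the bound $C_p(\kappa(1+r)/r)^p(h\vee h_0)^{p-1}$, $p<4/3$, from Lemma \ref{lem:modulus-concentrate} is essentially sharp. No truncation rescues Hoeffding/Bernstein. An exceedance probability $\exp(-ch^{-\gamma})$ forces a truncation level $M\approx h^{-\gamma/2}$, at which the truncated summands are only bounded by $\exp(c'h^{-\gamma})$, so the resulting bounds are vacuous.

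This is not cosmetic, because the polynomial tail is exactly what produces the exponent $1/10$. With a per-scale bound of only $h^{p-1}r^{-p}$, the union bound in Step 4 does not sum merely because the grid is polynomial. Its cardinality must be balanced against $h^{1/3}r^{-4/3}$. Your grid $h_k(1+jh_k^{s'})$ also ties the relative mesh to $s'$, so the interpolation term $\omega_{\Psi_2}(h'-h,Z)\lesssim h^{1/2+s'/2}$ only yields $s<s'/2$, not $s<s'$.

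The repair is the paper's route:
\begin{itemize}
\item Handle the mean with (i) alone: $\E\Psi_2(|\Delta_x|/K_h)\le(1+r)^{-2}$ by convexity of $t\mapsto e^t-1$. Then $\kappa$ never enters there, and your truncation in Step 2 is unnecessary.
\item Handle the fluctuation by moments instead of concentration. Condition (i)' gives $\E Y_x^p\le\kappa\bigl(1-\tfrac{3p}{4(1+r)^2}\bigr)^{-1/2}$ for $p<4/3$. The von Bahr--Esseen inequality on the two independent block families gives $\E\bigl|\int(Y_x-\E Y_x)\,dx\bigr|^p\lesssim(h\vee h_0)^{p-1}$. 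Markov then yields $P(\omega_{\Psi_2}(h,Z)\ge\tau\sqrt h(1+r))\lesssim(h\vee h_0)^{p-1}r^{-p}$.
\item Decouple the grid mesh from the threshold. Use about $h^{-a}$ grid points of relative mesh $h^{a}$ per dyadic scale, with $r=h^{s'}$. Summability needs $\tfrac13-\tfrac43 s'-a>0$, and the interpolation error is $h^{a/2}$. Taking $a=2s'$ gives exactly $s'<1/10$.
\end{itemize}
This matches the paper's grid $j2^{-i}$, $j\le 2^{i\zeta}$, with $\zeta=1/6$, i.e.\ $a=\zeta/(1-\zeta)=1/5$.
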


As part of the proof of Theorem \ref{thm:modulus-fine}, we derive and use the following tail bound for the modulus for a single $h$, which is of independent interest.

\begin{lemma}\label{lem:modulus-concentrate}
    Under conditions (i), (ii), (iii), there exists a $C>0$ such that for all $h\in (0,1)$, and all $\eta\in[0,2]$
    \begin{align*}
        P\left(\omega_{\Psi_2}(h,Z)\geq \tau \sqrt{h}(1+r) \right) \leq C(h\vee h_0)^{\eta r} \left( \frac{1}{r^2+(2-\eta)r} \right), \quad r\in[0,\tfrac{1}{2}].
    \end{align*}
    Under conditions (i)', (ii), and (iii), there exists for any $p\in(1,\frac{4}{3})$ a $C_p$ such that for all $h\in(0,1)$,
    \begin{align*}
        P\left(\omega_{\Psi_2}(h,Z)\geq \tau \sqrt{h}(1+r) \right) \leq C_p \left(\frac{ \kappa(1+r)}{r}\right)^{p} (h\vee h_0)^{p-1} ,\qquad r>0.
    \end{align*}
\end{lemma}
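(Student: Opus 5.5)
We reduce both tail bounds to a bound on the single random variable
\[
I_h(K)=\int_0^{1-h}\Psi_2\!\left(\frac{|Z_{x+h}-Z_x|}{K}\right)dx ,
\]
evaluated at $K=\tau\sqrt h(1+r)$. By definition of $\omega_{\Psi_2}$, the event $\{\omega_{\Psi_2}(h,Z)\ge K\}$ implies $I_h(K)\ge 1$. Write $I_h=\E I_h+(I_h-\E I_h)$. Then the probability is controlled by Chebyshev applied to $I_h-\E I_h$ (first bound), or by Markov with a $p$-th moment (second bound), provided the mean $\E I_h$ stays strictly below $1$.

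\textbf{The mean.} Set $Y=|Z_{x+h}-Z_x|/(\tau\sqrt h)$. Condition (i) gives $\E e^{Y^2}\le 2$. Jensen (concavity of $t\mapsto t^{a}$ with $a=(1+r)^{-2}<1$) then yields
\[
\E e^{Y^2/(1+r)^2}\le 2^{1/(1+r)^2},
\]
so $\E I_h\le 2^{(1+r)^{-2}}-1=:m_r$. Moreover $1-m_r\asymp r$ for $r\in[0,\frac12]$.

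\textbf{First bound: variance via (ii).} Split $I_h-\E I_h$ into integrals over $x$. The covariance of the integrands at $x,y$ vanishes when $|x-y|\ge h+h_0$, by (ii). For nearby $x,y$, Cauchy–Schwarz gives covariance at most $\E\Psi_2(Y/(1+r))^2\le \E e^{2Y^2/(1+r)^2}$. This is finite only if $2/(1+r)^2\le 1$, so the naive estimate fails for small $r$. This is where $\eta$ enters and where I expect the main obstacle. I would truncate at a level $M$: on $\{Y\le M\}$ the second moment is at most $e^{M^2(2-(1+r)^2)/(1+r)^2}$ times a constant, and the excess $\{Y>M\}$ is handled in $L^1$ by Markov, costing $\E[\Psi_2(Y/(1+r));Y>M]\lesssim e^{-M^2(1-(1+r)^{-2})}$. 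Choosing $e^{M^2}\approx (h\vee h_0)^{-\theta}$ balances the variance term, of order $(h\vee h_0)\cdot e^{M^2(\dots)}$ coming from the correlation length $h+h_0$, against the tail term. This should produce the power $(h\vee h_0)^{\eta r}$ together with a denominator coming from $(1-m_r)^2\asymp r^2$ and from the tail integral $\int_M^\infty$, which contributes $1/((2-\eta)r)$. The exact bookkeeping of the exponent $\eta\in[0,2]$ and the factor $1/(r^2+(2-\eta)r)$ is the delicate part. I would first do the calculation with the tail layer-cake representation $\E[e^{aY^2}\mathbf 1_{Y>M}]=\int P(Y>t)\,d(e^{at^2})$, using $P(Y>t)\le 2e^{-t^2}$. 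For $h\le h_0$, condition (iii) replaces the chaining scale: $\omega\le \sqrt h\sqrt{h/h_0}R$, so $h$ is effectively replaced by $h_0$.

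\textbf{Second bound: condition (i)'.} Condition (i)' gives, by Chernoff, $P(|Z_{x+h}-Z_x|>t\sqrt h)\le \kappa e^{-4t^2/(3\tau^2)}$. Hence $\E\Psi_2(Y/(1+r))^q<\infty$ for $q<\frac43(1+r)^2$, and in particular for every $q=p<4/3$. Its moments are bounded by $C_p\kappa^p$ up to a factor polynomial in $(1+r)/r$. I would then apply the Rosenthal / von Bahr–Esseen inequality for $p\in(1,2)$ to the centered integral. To do so, block $[0,1]$ into intervals of length $h\vee h_0$ and use (ii) to make alternate blocks independent. This gives $\E|I_h-\E I_h|^p\lesssim (h\vee h_0)^{p-1}\,\E|\Psi_2(\cdot)|^p$. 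Finally, Markov with the gap $1-\E I_h\gtrsim r/(1+r)$ yields the stated bound $C_p(\kappa(1+r)/r)^p(h\vee h_0)^{p-1}$.
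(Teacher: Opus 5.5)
Your second bound follows the paper's argument: the mean gap, blocking with the von Bahr--Esseen inequality at order $p<\frac43$, and Markov at order $p$. For the gap, your Jensen bound $\E I_h\le 2^{a}-1\le a$ with $a=(1+r)^{-2}$ is cleaner than the paper's derivative argument. That argument's intermediate claim $\E\Psi_2(|Z_{u+h}-Z_u|/\xi(r))\le 1-\frac{2r}{1+r}$ cannot hold for $r>1$. Your bound gives $1-\E I_h\ge 1-a\ge\frac{r}{1+r}$ for all $r>0$, which is all that is used. Three small corrections:
\begin{enumerate}
\item Alternate blocks of length $h\vee h_0$ are not $h_0$-separated in the sense of (ii) once you account for the increment length $h$. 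Use blocks of length $h+h_0$, or every third block.
\item Condition (iii) plays no role in this lemma. The factor $h\vee h_0$ comes entirely from the dependence range $h+h_0$, so drop the appeal to (iii).
\item The gap uses (i), which is not listed for the second bound. The paper uses it there too, and Theorem \ref{thm:modulus-fine} assumes it.
\end{enumerate}

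For the first bound your route genuinely differs. The paper takes $p=1+\eta r$ in the same $L_p$ scheme together with Lemma \ref{lem:subgauss-shrink}. The denominator $r^2+(2-\eta)r=(1+r)^2-p$ is just the blow-up of $\E\Psi_2(Y/(1+r))^p$ at the integrability threshold $p=(1+r)^2$. Truncation produces no $\eta$ at all, so your expectation that the tail integral ``contributes $1/((2-\eta)r)$'' is not what happens. Instead, truncation reaches the critical exponent directly.

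The bookkeeping you deferred goes as follows. Bound both pieces through $\E e^{Y^2}\le 2$ itself rather than through $P(Y>t)\le 2e^{-t^2}$:
\[
\E\bigl[e^{aY^2};\,Y>M\bigr]\le 2e^{-(1-a)M^2},\qquad \E\bigl[e^{2aY^2};\,Y\le M\bigr]\le 2e^{(2a-1)_+M^2}.
\]
Set $\gamma=1-a\ge r$. Chebyshev on the truncated part and Markov on the remainder give
\[
P\bigl(\omega_{\Psi_2}(h,Z)\ge\tau\sqrt h(1+r)\bigr)\le \frac{C(h\vee h_0)}{\gamma^{2}}e^{(2a-1)_+M^2}+\frac{C}{\gamma}e^{-(1-a)M^2}.
\]
The unbalanced choice $e^{aM^2}=\gamma/(h\vee h_0)$ then yields
\[
C\gamma^{-(1+r)^2}(h\vee h_0)^{(1+r)^2-1}\le Cr^{-1}(h\vee h_0)^{2r+r^2}\qquad\text{for } r\le\sqrt2-1.
\]
For larger $r$, take $M=\infty$ and get $C(h\vee h_0)$. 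If $h\vee h_0>\gamma$ there is nothing to prove.

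This implies the stated bound for every $\eta\in[0,2]$, because $(r+2-\eta)(h\vee h_0)^{(2-\eta)r+r^2}\le\frac52$. Following your plan literally (layer cake with $2e^{-t^2}$, balanced exponents) costs an extra $1/r$ and gives only $Cr^{-2}(h\vee h_0)^{2r+r^2}$. That covers $\eta=2$, the case used in Theorem \ref{thm:subgauss-int-concentrate}, but not the whole range $\eta\in(1,2)$: it fails for very small $r$.

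As to what each route buys: the paper's is shorter and reuses one $L_p$ machine for both parts. However, Markov at order $p=1+\eta r$ costs a factor $((1+r)/r)^{p}\asymp 1/r$, which the paper's display under condition (i) (``in combination with \eqref{eqn:bound1}'') omits. Carried through, that route proves $C(h\vee h_0)^{\eta r}/\bigl(r(r^2+(2-\eta)r)\bigr)$. This suffices for the application but is weaker than the statement. Your second-moment truncation attains the critical exponent $(1+r)^2-1$ with only a $1/r$ prefactor. Once the bookkeeping above is done, it proves the lemma exactly as stated.
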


\begin{remark}
    Until the recent contribution of \citet{kempka_path_2024}, $\B$ was the smallest space to which the sample paths of the limiting Brownian motion $W$ are known to belong, and thus the smallest space in which we can expect some form of weak convergence of $S_n$. 
    Whether any distributional convergence also applies in the smaller space introduced by \citet{kempka_path_2024} remains open. 
\end{remark}

\subsection{Regularity of vector-valued Brownian motion}\label{sec:Brownian}

As a special case of the previous results, we consider $Z=W$ for a vector-valued Brownian motion $W$.
To be precise, we suppose that $W$ is a centered Brownian motion on some separable Banach space $(\mathcal{X},\|\cdot\|)$, i.e.\ $v\mapsto l^* W_v$ is a univariate Brownian motion for each continuous linear functional $l^*:\mathcal{X}\to \R$.
Extending the work of \cite{ciesielski_orlicz_1993} on univariate Brownian motion, it was shown by \citet{hytonen_besov_2008} that the vector-valued Brownian motion admits paths in $\B$.
However, the exact behavior of $\omega_{\Psi_2}(h,W)$ as $h\to 0$ has been studied neither for the vector-valued nor for the univariate case.
Via Theorem \ref{thm:subgauss-int-concentrate}, we can obtain explicit upper bounds on the latter modulus.
In Section \ref{sec:modulus}, lower bounds on the modulus could not be obtained because we only impose upper bounds on the stochastic process.
For the Brownian case, more structure is available, allowing us to derive matching lower bounds.
This yields the exact limit of the modulus of continuity, and also shows that Theorem \ref{thm:subgauss-int-concentrate} is in general sharp.

\begin{theorem}\label{thm:modulus-Brownian}
    Let $W$ be a centered Brownian motion on some separable Banach space $(\mathcal{X},|\cdot|)$, i.e.\ $v\mapsto l^*W(v)$ is a univariate Brownian motion for each continuous linear functional $l^*:\mathcal{X}\to \R$.
    Then,
    \begin{align*}
        \limsup_{h\to 0} \frac{\omega_{\Psi_2}(h,W)}{\sqrt{h}} \;=\; \|W(1)\|_{\Psi_2} \qquad \text{almost surely}. 
    \end{align*}
    For the univariate case $\mathcal{X}=\R$, we find 
    \begin{align*}
        \limsup_{h\to 0} \frac{\omega_{\Psi_2}(h,W)}{\sqrt{h}} \;=\; \sqrt{\tfrac{8}{3}} \qquad \text{almost surely}. 
    \end{align*}
\end{theorem}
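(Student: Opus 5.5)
The proof has two halves: an upper bound obtained from Theorem \ref{thm:subgauss-int-concentrate}, and a matching lower bound from an ergodic/law-of-large-numbers argument for the integrated increments.

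\textbf{Upper bound.} Set $\tau = \|W(1)\|_{\Psi_2}$. By Gaussian scaling, $W_u - W_v \deq \sqrt{|u-v|}\,W(1)$, so $\|W_u-W_v\|_{\Psi_2} = \tau\sqrt{|u-v|}$ and condition (i) holds. Increments over disjoint intervals are independent, so (ii) holds with $h_0=0$ and (iii) is vacuous. Theorem \ref{thm:subgauss-int-concentrate} then gives, for each $\epsilon>0$,
\[
P\Big(\sup_{h\le\delta}\omega_{\Psi_2}(h,W)/\sqrt h>\tau(1+\epsilon)\Big)\to0 \quad\text{as } \delta\downarrow0 .
\]
The event $\{\limsup_{h\to0}\omega_{\Psi_2}(h,W)/\sqrt h>\tau(1+\epsilon)\}$ is contained in each of these events, so it is null. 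Letting $\epsilon\downarrow0$ along a countable sequence gives the almost sure upper bound, as in \eqref{eqn:omega-limit}.

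\textbf{Lower bound.} It suffices to show $\omega_{\Psi_2}(h_k,W)/\sqrt{h_k}\to\tau$ almost surely along some sequence $h_k\to0$, e.g.\ $h_k=2^{-k}$. Fix $K<\tau$, so that $\E\Psi_2(|W(1)|/K)>1$. Since $\omega_{\Psi_2}(h,W)\ge K\sqrt h$ if and only if $\int_0^{1-h}\Psi_2(|W_{x+h}-W_x|/(K\sqrt h))\,dx>1$, we need
\[
I_h:=\int_0^{1-h}\Psi_2\Big(\tfrac{|W_{x+h}-W_x|}{K\sqrt h}\Big)dx \;\to\; \E\Psi_2(|W(1)|/K) \quad\text{a.s. along } h_k,
\]
possibly with the limit $+\infty$, which still suffices. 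Write $I_h$ as an average over $x$ of the stationary sequence of normalized increments $\xi_x=(W_{x+h}-W_x)/\sqrt h$, which are $h$-dependent.

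\emph{Truncation.} $\E\Psi_2(|W(1)|/K)$ may be infinite, and $\Psi_2$ of a Gaussian with $K$ near $\tau$ has heavy tails. So I would replace $\Psi_2$ by $\Psi_2\wedge M$ and pick $M$ large enough that $\E[\Psi_2(|W(1)|/K)\wedge M]>1$. This is possible by monotone convergence, and it only lowers $I_h$.

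\emph{Concentration.} The truncated integrand is bounded by $M$, and values at $x,y$ with $|x-y|>h$ are independent. Hence
\[
\operatorname{Var}(I_h^{(M)})\le 2hM^2,
\]
and summability over $h_k=2^{-k}$ follows from Chebyshev. Borel--Cantelli then gives $I_{h_k}^{(M)}\to\E[\Psi_2(|W(1)|/K)\wedge M]\,\cdot 1>1$ almost surely. Therefore $\liminf_k\omega_{\Psi_2}(h_k,W)/\sqrt{h_k}\ge K$ almost surely, and letting $K\uparrow\tau$ along rationals completes the lower bound. This also shows the limit exists along $h_k$.

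\textbf{Univariate constant.} For $\mathcal X=\R$ it remains to compute $\|N(0,1)\|_{\Psi_2}$. We have $\E e^{Z^2/K^2}=(1-2/K^2)^{-1/2}$, and setting this equal to $2$ gives $1-2/K^2=1/4$, i.e.\ $K^2=8/3$.

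\textbf{Main obstacle.} The upper bound is entirely carried by Theorem \ref{thm:subgauss-int-concentrate}. The only delicate point is in the lower bound: $\Psi_2(|W(1)|/K)$ need not even be integrable when $K$ is close to $\tau$, so the truncation step is necessary before the variance bound can be applied.
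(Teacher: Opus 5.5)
Your proof is correct and has the same architecture as the paper's: the upper bound is Theorem~\ref{thm:subgauss-int-concentrate} applied with $h_0=0$ and $\tau=\|W(1)\|_{\Psi_2}$, and the lower bound is a law of large numbers for $\int_0^{1-h}\Psi_2\bigl(|W_{x+h}-W_x|/(K\sqrt{h})\bigr)\,dx$ with $K<\|W(1)\|_{\Psi_2}$, using independence of increments over intervals at distance at least $h$. The paper instead rescales $h=1/n$ by self-similarity into a $1$-dependent average of $n$ unit-block terms and invokes the strong law, whereas you truncate, bound the variance by $2hM^2$ and apply Borel--Cantelli along $h_k=2^{-k}$. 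Your route is slightly more careful: it gives genuine almost sure convergence, while the paper's distributional identity holds only for each fixed $n$ and so transfers only convergence in probability (which still suffices for the $\limsup$), and it handles a possibly infinite $\E\Psi_2(|W(1)|/K)$ explicitly.
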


For comparison, in the real-valued case, \cite{marcus_lp_2008} showed that the $L_p$ modulus converges as $\lim_{h\to 0} \omega_{L_p}(h, W)/\sqrt{h} = \E |W(1)|^p$ for any $p\geq 1$, and a corresponding central limit theorem corresponding is derived in \cite{marcus_clt_2008}.
The same limit for the $L_p$ modulus of Banach-space valued Brownian motion has been derived by \cite{hytonen_besov_2008}; see equation (4.1) therein. 
In contrast, the presented result on the sub-Gaussian modulus of continuity is novel.
We note that similar proof techniques could also yield the exact modulus in the exponential-type Orlicz norms $\Psi_p(x)=\exp(x^p)-1$, with $p\in[1,2)$.

For the univariate case, we also obtain rates of convergence for the modulus of continuity as a consequence of Theorem \ref{thm:modulus-fine} and Lemma \ref{lem:modulus-concentrate}.

\begin{theorem}\label{thm:Brownian-concentration}
    Let $W$ be a standard Brownian motion taking values in $\R$.
    Then, for any $s<\frac{1}{4}$, and any $s^*<\frac{1}{10}$,
    \begin{align}
        \max\left[0,\;\frac{\omega_{\Psi_2}(h,W)}{\sqrt{h}} - \sqrt{\frac{8}{3}}\right] \;=\; \begin{cases}
        \mathcal{O}_{P}\left( h^{s} \right), \\
        \mathcal{O}_{\mathrm{a.s.}}\left( h^{s^*} \right)\quad \text{almost surely}. 
\end{cases}\label{eqn:rate-1}
    \end{align}
\end{theorem}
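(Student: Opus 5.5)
The plan is to specialize Theorem \ref{thm:modulus-fine} and Lemma \ref{lem:modulus-concentrate} to $Z=W$ with $\tau=\sqrt{8/3}$, and to check the hypotheses (i), (i)', (ii), (iii) explicitly.

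\textbf{Verifying the hypotheses.} For a standard Brownian motion, $W_u-W_v\sim N(0,|u-v|)$, so $\|W_u-W_v\|_{\Psi_2}=\|W(1)\|_{\Psi_2}\sqrt{|u-v|}$. A direct computation gives $\E\exp(X^2/K^2)=(1-2/K^2)^{-1/2}$ for $X\sim N(0,1)$, which equals $2$ exactly when $K^2=8/3$. Hence (i) holds with $\tau=\sqrt{8/3}$. Independent increments give (ii) with $h_0=0$, and (iii) is then vacuous, as noted after Theorem \ref{thm:subgauss-int-concentrate}. For (i)', use $\E e^{\lambda|X|}\le 2\E e^{\lambda X}=2e^{\lambda^2|u-v|/2}$ and $\tfrac{3}{16}\tau^2=\tfrac12$, so (i)' holds with $\kappa=2$ (this is also Remark \ref{rem:subgauss}).

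\textbf{Almost sure rate.} Theorem \ref{thm:modulus-fine} then gives, for every $s^*<1/10$, $\limsup_{h\to0}h^{-s^*}[\omega_{\Psi_2}(h,W)/\sqrt h-\sqrt{8/3}]\le0$ almost surely. Equivalently, $\sup_{h\le1} h^{-s^*}\max[0,\cdot]<\infty$ almost surely, which is the $\mathcal O_{\mathrm{a.s.}}(h^{s^*})$ claim.

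\textbf{Rate in probability.} Fix $s<1/4$ and $M>0$, and set $r=Mh^{s}/\tau$. The first bound of Lemma \ref{lem:modulus-concentrate} with $h_0=0$ and the choice $\eta=2$ gives
\[
P\bigl(\omega_{\Psi_2}(h,W)\ge\tau\sqrt h(1+r)\bigr)\le C h^{2r}r^{-2}\le C\tau^2 M^{-2}h^{-2s}.
\]
This does not suffice. The second bound of Lemma \ref{lem:modulus-concentrate} gives instead
\[
C_p\Bigl(\tfrac{2(1+r)}{r}\Bigr)^p h^{p-1}\lesssim M^{-p}h^{p-1-ps}
\]
for small $r$. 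This tends to zero uniformly in small $h$, or is at least bounded by a quantity that is small in $M$, provided $p-1-ps\ge0$, i.e.\ $s\le 1-1/p$. Letting $p\uparrow4/3$ allows exactly $s<1/4$, which matches the threshold in the statement. Thus, for each fixed $h$,
\[
P\Bigl(\max\bigl[0,\omega_{\Psi_2}(h,W)/\sqrt h-\sqrt{8/3}\bigr]>Mh^s\Bigr)\le C'M^{-p}h^{p-1-ps}\le C'M^{-p}
\]
for $h\le1$, and hence the quantity is $\mathcal O_P(h^s)$ in the pointwise sense as $h\to0$. For $r\ge1/2$, the factor $(1+r)/r$ is bounded, and the same estimate holds.

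\textbf{Main obstacle.} The delicate step is interpreting $\mathcal O_P$. If uniformity in $h$ (a supremum) were intended, one would need to sum the bounds over a dyadic grid $h_k=2^{-k}$. The exponent $p-1-ps>0$ makes $\sum_k h_k^{p-1-ps}$ converge, and the passage between grid points would be controlled through continuity of $h\mapsto\omega_{\Psi_2}(h,W)$, using Theorem \ref{thm:subgauss-int-concentrate}'s chaining bounds. This interpolation is where care is needed. I would present the pointwise statement as the main claim and note that the strict inequality $s<1/4$ leaves room for this dyadic summation.
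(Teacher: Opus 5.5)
Your proposal is correct and matches the paper's route. The paper states this result as a consequence of Theorem \ref{thm:modulus-fine}, which gives the almost sure rate with $s^*<1/10$, and of the second bound of Lemma \ref{lem:modulus-concentrate} with $h_0=0$, $\kappa=2$, $\tau=\sqrt{8/3}$ and $p\uparrow 4/3$, which gives the pointwise $\mathcal{O}_P(h^s)$ rate for $s<1/4$. Two small caveats: for fixed $M$, invoke the $M^{-p}$ bound only for $h$ small enough that $r=Mh^s/\tau<1/2$, since for $r\geq 1/2$ the bound is $\lesssim h^{p-1}$ rather than $\lesssim M^{-p}$. Also, your closing suggestion that $s<1/4$ leaves room for a version uniform in $h$ is unsupported, because the interpolation between grid points is exactly what degrades the exponent to $1/10$ in Theorem \ref{thm:modulus-fine}.
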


We conjecture that the faster rate in probability could be sharpened to the boundary case $s=\frac{1}{4}$, but whether the second convergence rate can be improved to match the faster rate is unclear.

To derive the rate of convergence for the vector-valued case, we would require a quantitative tail bound as in (i)' for the Brownian motion in Banach spaces.
Fernique's Theorem states that $\E\exp(\eta \|W(1)\|^2)<\infty$ for some $\eta>0$, see e.g.\ \cite[Thm.~2.7]{da_prato_stochastic_2014}, and thus (i)' does hold for some $\tau$.
However, this $\tau$ is in general not explicit and it is unknown (to us) whether $\tau=\|W(1)\|_{\Psi_2}$.
Thus, we leave the extension of Theorem \ref{thm:Brownian-concentration} to the vector-valued case open.

\section{Functional central limit theorems}\label{sec:Donsker}

We now return to the asymptotics of the polygonal partial sum process $S_n$.
For iid real-valued random variables $Y_t$, Donsker's Theorem yields weak convergence in $C[0,1]$.
The standard approach to prove this is to first establish the finite dimensional weak convergence of $(S_n(u_1),\ldots, S_n(u_m))$ for any $u_1,\ldots, u_m\in[0,1]$, and then extend this to functional weak convergence via tightness of the process $u\mapsto S_n(u)$. 
If the $Y_t$ are sub-Gaussian, Theorem \ref{thm:subgauss-int-concentrate} shows that $S_n\in \B$ almost surely, and the compact embedding $\B\hookrightarrow C[0,1]$ yields tightness.
In view of the compact Besov-Orlicz embedding $\B\hookrightarrow B^\rho_{\Psi_2,\infty}$ for any $\rho(h)\gg \sqrt{h}$, see Proposition \ref{prop:embedding}, this also yields tightness in even smaller spaces, and thus gives rise to sharper functional central limit theorems. 

As the regularity result is established for normed spaces, the corresponding invariance principle also holds for functional data.
Here, we specifically assume $(\mathcal{X},\|\cdot\|)$ to be a separable Banach space of type 2. 
That is, there exists a factor $K=K(\mathcal{X})$ such that for any $x_1,\ldots, x_n\in\mathcal{X}$, and iid Rademacher random variables $\epsilon_i$, we have $\E\|\sum_{i=1}^n \epsilon_i x_i\|^2 \leq K \sum_{i=1}^n\|x_i\|^2$.
This includes all Hilbert spaces, in particular $\R^d$, as well as any $L_p$ function space with $p\geq 2$, see \cite[Sec.~9.2]{ledoux_probability_2011}.
This restriction ensures that a sub-Gaussian concentration inequality for sums of iid random variables holds, see Lemma \ref{lem:subgauss-Banach}, and that the central limit theorem holds for any iid sequence of square-integrable random variables \cite{hoffmann-jorgensen_law_1976}, see also \cite[Thm.~10.5]{ledoux_probability_2011}.
Beyond the class of type 2 Banach spaces, more structural assumptions on the random variables $X_i$ are required to establish a central limit theorem. 
For example, the Banach space $C[0,1]$ is not of type 2, and \cite{kutta_multiscale_2025} require additional Hölder smoothness assumptions of the random elements $X_t\in C[0,1]$ to establish weak convergence.

\begin{theorem}\label{thm:FCLT}
    Let $X_t$ be an iid sequence of centered random variables with values in a separable Banach space $(\mathcal{X},\|\cdot\|)$ of type 2, such that $\|X_t\|_{\Psi_2}<\infty$.
    Then $S_n\wconv W$ in $B^{\rho}_{\Psi_2,\infty}$ for any modulus $\rho(h)\gg \sqrt{h}$ as $h\to \infty$, where $W(u)_{u\in[0,1]}$ is a $\mathcal{X}$-valued Brownian motion with $\Cov(W(1))=\Cov(X_1)$.
\end{theorem}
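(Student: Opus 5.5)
The plan is the standard two-step route: convergence of finite-dimensional distributions, plus tightness in $B^{\rho}_{\Psi_2,\infty}$ obtained from the compactness criterion of Proposition \ref{prop:embedding}.

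\emph{Finite-dimensional laws.} For $0\le u_1<\dots<u_m\le 1$, the vector $(S_n(u_1),\ldots,S_n(u_m))$ differs from the block partial sums $n^{-1/2}\sum_{t\le \lfloor u_j n\rfloor}X_t$ by at most $n^{-1/2}\max_{t\le n}\|X_t\|$. Since $\|X_t\|_{\Psi_2}<\infty$, this maximum is $\mathcal{O}_P(\sqrt{\log n})$, so the difference is $o_P(1)$. The block sums are independent iid sums, and the CLT in type 2 spaces for square-integrable variables \cite{hoffmann-jorgensen_law_1976} gives joint convergence to the Gaussian increments of $W$ with covariance $\Cov(X_1)$. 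The limit $W$ itself lies in $B^{\rho}_{\Psi_2,\infty}$ by Theorem \ref{thm:modulus-Brownian} or Theorem \ref{thm:subgauss-int-concentrate}. Since the limit is separable-valued and its law is determined by its finite-dimensional distributions, tightness together with fidi convergence identifies $W$ as the limit.

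\emph{Tightness.} I apply Theorem \ref{thm:subgauss-int-concentrate} to $Z=S_n$ with $h_0=2/n$, checking its three conditions uniformly in $n$.
\begin{itemize}
\item[(i)] For grid points, $S_n(v)-S_n(u)$ is a normalized sum of $k\approx n|u-v|$ iid sub-Gaussian terms. The sub-Gaussian concentration for sums in type 2 spaces (Lemma \ref{lem:subgauss-Banach}) gives $\|S_n(v)-S_n(u)\|_{\Psi_2}\le \tau\sqrt{|u-v|}$ with $\tau$ depending only on $\|X_1\|_{\Psi_2}$ and $K(\mathcal{X})$. Non-grid endpoints contribute interpolated single terms of size $\lesssim n^{-1/2}\sqrt{n|u-v|}$ when $|u-v|<1/n$, and are absorbed otherwise.
\item[(ii)] Increments separated by more than $2/n$ involve disjoint sets of $X_t$, hence are independent.
\item[(iii)] For $h\le 2/n$, linearity on each cell gives $|S_n(x+h)-S_n(x)|\le h\sqrt{n}\,\max_{t\le n+1}\|X_t\|$. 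Hence $\omega_{\Psi_2}(h,S_n)\le C h\sqrt n M_n=\sqrt h\sqrt{h/h_0}\,R_n$ with $R_n=C\sqrt2 \max_t\|X_t\|$. This is the main obstacle: $R_n$ must have tails $G$ uniform in $n$, but the maximum grows like $\sqrt{\log n}$. I would fix this with a sharper bound. The piecewise-linear increment at scale $h$ is $h\sqrt n\,\|X_{\lceil x n\rceil}\|$ (plus a neighbour), so its $\Psi_2(dx)$-norm over $x$ is controlled by the empirical Orlicz norm $\inf\{K:\frac1n\sum_t\Psi_2(\|X_t\|/K)\le 1\}$. This quantity is tight by the law of large numbers applied to $\Psi_2(\|X_t\|/K)$ for $K$ slightly larger than $\|X_1\|_{\Psi_2}$. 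Taking $R_n$ to be this empirical norm (times a constant) gives uniform tails.
\end{itemize}

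With $G$ and $\tau$ uniform in $n$, Theorem \ref{thm:subgauss-int-concentrate} yields $P(\sup_{h\le\delta}\omega_{\Psi_2}(h,S_n)/\sqrt h>2\tau)\le K(\delta)$ uniformly. Combining this with the trivial bound $\omega_{\Psi_2}(h,S_n)\lesssim\|S_n\|_\Psi$ for $h\ge\delta$ and tightness of $\|S_n\|_{\Psi_2}$ shows that $|S_n|_{\B}$ is tight. For $\epsilon>0$ I then choose $K_\epsilon$ as the set of $f$ with $|f|_{\B}\le M$, $\|f\|_{\Psi_2}\le M$, and $f(t)\in C_t$, where the $C_t$ are compact sets in $\mathcal{X}$. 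Tightness of each marginal $S_n(t)$ follows from its weak convergence, and for uniform-in-$t$ compactness I use a countable dense set of $t$ together with the Hölder embedding of Proposition \ref{prop:embedding}. By Proposition \ref{prop:embedding}, $K_\epsilon$ is relatively compact in $B^{\rho}_{\Psi_2,\infty}$ for $\rho\gg\sqrt h$ and has probability at least $1-\epsilon$. Prokhorov's Theorem then concludes the proof.
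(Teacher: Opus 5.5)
Your proposal is correct and follows essentially the same route as the paper. You get finite-dimensional convergence from the type-2 CLT and tightness from Theorem \ref{thm:subgauss-int-concentrate} with $h_0=2/n$, checking condition (i) via Lemma \ref{lem:subgauss-Banach} and condition (iii) via the empirical Orlicz norm exactly as in Lemma \ref{lem:Sn-conditions-ii-iii}, and you then conclude with Proposition \ref{prop:embedding} and Prokhorov's Theorem. Your extra steps fill in details the paper's proof passes over: you control $h\geq\delta$ through $\|S_n\|_{\Psi_2}$, and you build pointwise compact sets $C_t$ so that Proposition \ref{prop:embedding} actually applies when $\mathcal{X}$ is infinite-dimensional.
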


Because of the continuous embedding $B^{\rho}_{\Psi_2,\infty} \hookrightarrow C^{\rho^*}$ with $\rho^*(h) = \rho(h) \sqrt{|\log h|}$, see Proposition~\ref{prop:embedding}, we also obtain a corresponding Hölderian invariance principle.

\begin{cor}\label{cor:FCLT}
    Under the conditions of Theorem \ref{thm:FCLT}, the weak convergence $S_n\wconv W$ holds in $C^{\rho^*}$ for any modulus $\rho^*(h) \gg \sqrt{h |\log h|}$.
\end{cor}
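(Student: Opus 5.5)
The plan is to deduce the corollary from Theorem \ref{thm:FCLT} via the continuous mapping theorem, using the embedding of Proposition \ref{prop:embedding}. Fix a modulus $\rho^*$ with $\rho^*(h)\gg\sqrt{h|\log h|}$ as $h\downarrow 0$. Set
\[
\rho(h)=\rho^*(h)/\sqrt{|\log h|}
\]
for small $h$, modified if necessary so that it is a genuine modulus. Then $\rho(h)/\sqrt{h}=\rho^*(h)/\sqrt{h|\log h|}\to\infty$, so $\rho(h)\gg\sqrt h$, and Theorem \ref{thm:FCLT} gives $S_n\wconv W$ in $B^{\rho}_{\Psi_2,\infty}$. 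Proposition \ref{prop:embedding} provides a continuous (hence measurable) linear map $\iota:B^{\rho}_{\Psi_2,\infty}\to C^{\rho^*}$ that selects the continuous representative. Since $S_n$ and $W$ have continuous paths, $\iota(S_n)=S_n$ and $\iota(W)=W$, and the continuous mapping theorem yields $S_n\wconv W$ in $C^{\rho^*}$.

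The technical point is that Proposition \ref{prop:embedding} requires $ch\le\rho(h)\le Ch^r$, and $\rho$ must be increasing and continuous. I would handle this in three steps.

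\textbf{Reduction to small $\rho^*$.} If $\rho^*\le\tilde\rho^*$ near $0$, up to constants, then $C^{\rho^*}\hookrightarrow C^{\tilde\rho^*}$ continuously. So it suffices to prove the claim for $\tilde\rho^*=\min(\rho^*,\rho_{1/2,1})$. This minimum is still $\gg\sqrt{h|\log h|}$, and it makes the upper bound $\rho(h)\le Ch^{r}$ hold with any $r<1/2$.

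\textbf{Lower bound and monotonicity.} The lower bound $\rho(h)\ge ch$ is automatic, since $\rho(h)\ge\sqrt h\ge h$ for small $h$. To secure monotonicity and continuity, I would replace $\rho$ by a regularization such as $\bar\rho(h)=\sqrt{h}\,\inf_{t\le h}\rho(t)/\sqrt t$. The ratio $\bar\rho(h)/\sqrt h$ is nonincreasing and tends to infinity as $h\downarrow 0$, and $\bar\rho\le\rho$. It remains to check that $\bar\rho$ is increasing, or to adjust it slightly so that it is, and that $\bar\rho(h)\sqrt{|\log h|}\le\rho^*(h)$, so that the target space only gets larger.

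\textbf{Choice of norm on $C^{\rho^*}$.} Weak convergence in $C^{\rho^*}$ is taken with respect to the norm $\|f\|_\infty+|f|_{C^{\rho^*}}$. The embedding of Proposition \ref{prop:embedding} controls both terms, because a continuous representative bounded in the Hölder seminorm is also bounded in sup norm once its Orlicz norm is controlled.

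The main obstacle I expect is this regularization of $\rho$ into an admissible modulus, together with a possible separability or measurability issue for $C^{\rho^*}$. The latter is harmless: the limit $W$ and the processes $S_n$ are images of random elements of $B^{\rho}_{\Psi_2,\infty}$ under the continuous map $\iota$, so their laws are tight images of tight laws.
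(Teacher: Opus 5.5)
Your proposal is correct and follows the paper's own route: the paper obtains the corollary in one line from Theorem \ref{thm:FCLT}, applied with $\rho(h)=\rho^*(h)/\sqrt{|\log h|}$, together with the continuous embedding $B^{\rho}_{\Psi_2,\infty}\hookrightarrow C^{\rho^*}$ of Proposition \ref{prop:embedding} and the continuous mapping theorem; your extra steps only check the admissibility conditions that the paper leaves implicit. The regularization $\bar\rho$ is unnecessary: after capping $\rho^*$, the function $h\mapsto\rho^*(h)/\sqrt{\log(e/h)}$ is already increasing and continuous, being a product of two positive increasing functions.
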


This invariance principle in Besov spaces is the first result for moduli of continuity in the regime $\sqrt{h}\ll \rho(h)\ll \sqrt{h |\log h|}$.
In comparison to the existing functional limit theorems in the literature, which require moduli $\rho(h) \gg \sqrt{h|\log h|}$, our stronger convergence result imposes slightly stricter assumptions. 
Specifically, we consider partial sums of iid sub-Gaussian random variables, whereas \cite{morel_weak_2004} and \cite{giraudo_weak_2017} have weaker moment assumptions, and \citet{ait_ouahra_critere_2011, ait_ouahra_principe_2012} show tightness of any stochastic process with sub-Gaussian increments, of which the partial sum process is an example.
While these results only cover real-valued random variables $X_t$, some authors consider Hölderian invariance principles for vector-valued (functional) random variables $X_t$. 
In particular, \cite{rackauskas_holderian_2009} study linear time series in Hilbert spaces and subsequently extend this to Banach spaces \citep{rackauskas_limit_2010}, though only for moduli $\rho(h)\gg \rho_{1/2,\nu}$ for some $\nu>1/2$.
We also refer to \cite{kutta_monitoring_2025} for $m$-approximable time series with values in Banach spaces, and to \cite{kutta_multiscale_2025} for mixing time series taking values in the space $C[0,1]$, both for moduli $\rho(h)=h^\mu$ with $\mu<1/2$.

The modulus $\rho(h)=\rho_{1/2,0}(h)=\sqrt{h}$ is critical as this is exactly the Besov-Orlicz regularity of the limiting Brownian motion. 
In this edge case, we can not establish weak convergence of the partial sum process, but it is nonetheless possible to obtain the distributional limit of the Besov-Orlicz seminorm.

\begin{theorem}\label{thm:convergence}
    Under the conditions of Theorem \ref{thm:FCLT}, there exists a $K=K(\mathcal{X})>0$ such that
    \begin{align}
        \left(|S_n|_{\B} \vee \tau\right) \;\wconv\; \left(|W|_{\B} \vee \tau\right), \quad \text{for all } \; \tau>K\|X\|_{\Psi_2}.\label{eqn:thresholded}
    \end{align}
    If $\mathcal{X}=\R$ and $\E \exp(\lambda X_i)\leq \exp(\sigma^2\lambda^2/2)$ for all $\lambda$, then \eqref{eqn:thresholded} holds for $\tau>\sigma\sqrt{8/3}$.
\end{theorem}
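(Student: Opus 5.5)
The plan is to split the seminorm into a small-scale part and a large-scale part. For $\delta\in(0,1)$ set $A_\delta(f)=\sup_{h\leq\delta}\omega_{\Psi_2}(h,f)/\sqrt{h}$ and $B_\delta(f)=\sup_{\delta<h<1}\omega_{\Psi_2}(h,f)/\sqrt{h}$, so that $|f|_{\B}=A_\delta(f)\vee B_\delta(f)$ and
\begin{align*}
    |f|_{\B}\vee\tau = \left(A_\delta(f)\vee\tau\right)\vee\left(B_\delta(f)\vee\tau\right).
\end{align*}
The large-scale functional $B_\delta$ is continuous on $C[0,1]$ for fixed $\delta$. Indeed, $|\omega_{\Psi_2}(h,f)-\omega_{\Psi_2}(h,g)|\leq \omega_{\Psi_2}(h,f-g)\leq C\|f-g\|_\infty$ by the triangle inequality for the Orlicz norm, and $1/\sqrt{h}\leq\delta^{-1/2}$ on $(\delta,1)$. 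Donsker's theorem in $C([0,1];\mathcal{X})$ holds under the type 2 assumption and is also implied by Theorem \ref{thm:FCLT}. Hence $B_\delta(S_n)\vee\tau\wconv B_\delta(W)\vee\tau$ by the continuous mapping theorem.

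Next, show that the small-scale part is asymptotically negligible once it is thresholded at $\tau$. I would apply Theorem \ref{thm:subgauss-int-concentrate} to $Z=S_n$ with $h_0=2/n$ and some $\tau'<\tau$ with $\tau'\geq K\|X\|_{\Psi_2}$. The conditions are checked as follows.
\begin{itemize}
    \item[(i)] This follows from the sub-Gaussian concentration for sums in type 2 spaces (Lemma \ref{lem:subgauss-Banach}). For $u,v$ on the grid $\{k/n\}$ it gives $\|S_n(u)-S_n(v)\|_{\Psi_2}\leq K\|X\|_{\Psi_2}\sqrt{|u-v|}$. Off the grid, the linear interpolation pieces are handled directly; this may cost a constant, which is absorbed into $K$.
    \item[(ii)] This holds by independence of the $X_t$, since increments separated by $2/n$ involve disjoint sets of summands.
    \item[(iii)] For $h\leq 2/n$, the path is piecewise linear with slopes $\sqrt{n}X_t$. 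Hence $|S_n(x+h)-S_n(x)|\leq h\sqrt{n}\max(|X_{t}|,|X_{t+1}|)$, which gives $\omega_{\Psi_2}(h,S_n)\leq C h\sqrt{n}\max_{t\leq n}|X_t|$. The Orlicz norm in $x$ is in fact controlled by an average over $t$, so a better choice is $R=C\|\,|X_{\lceil\cdot n\rceil}|\,\|_{\Psi_2(dx)}$. This quantity is tight uniformly in $n$ by the law of large numbers applied to $\frac1n\sum_t\exp(|X_t|^2/c^2)$. Its tail bound $G$ is then independent of $n$, which is what uniformity requires.
\end{itemize}
Because $K(\tau',\epsilon,G,\delta)$ does not depend on $n$, choosing $\epsilon$ with $\tau'(1+\epsilon)\leq\tau$ yields
\begin{align*}
    \sup_n P\left(A_\delta(S_n)>\tau\right)\to 0 \quad\text{as }\delta\to0.
\end{align*}
The same holds for $W$, either by Theorem \ref{thm:modulus-Brownian} or by Theorem \ref{thm:subgauss-int-concentrate} with $h_0=0$, using $\|W(1)\|_{\Psi_2}\leq K\|X\|_{\Psi_2}$. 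On the event $\{A_\delta\leq\tau\}$ we have $|f|_{\B}\vee\tau=B_\delta(f)\vee\tau$. A standard approximation argument (Billingsley, Thm.~3.2) then transfers the weak convergence from $B_\delta\vee\tau$ to $|\cdot|_{\B}\vee\tau$: the difference is nonzero only on events whose probability vanishes uniformly in $n$ as $\delta\to0$.

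For the real-valued statement, the assumption $\E\exp(\lambda X)\leq\exp(\sigma^2\lambda^2/2)$ tensorizes over sums. This gives the variance-proxy bound for $S_n(u)-S_n(v)$ with constant $\sigma^2|u-v|/2$ at grid points, and interpolation preserves it by convexity of $\lambda\mapsto$ the log-mgf along the linear piece. By Remark \ref{rem:subgauss}, a variance proxy $\sigma^2|u-v|/2=\tfrac{3}{16}\tau'^2|u-v|$ corresponds to $\tau'=\sigma\sqrt{8/3}$, and through the inequality of \cite{leskela_sharp_2026} it yields condition (i) with that $\tau'$. Any $\tau>\sigma\sqrt{8/3}$ then works.

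The main obstacle is verifying (i) and (iii) uniformly in $n$ with sharp constants. The off-grid interpolation in (i) must not inflate the constant; this matters especially in the real case, where one needs exactly $\sigma\sqrt{8/3}$. Condition (iii) needs a random variable $R$ whose tail does not depend on $n$. I would handle the interpolation by writing $S_n(u)-S_n(v)$ as a weighted sum $\sum_t a_tX_t/\sqrt n$ with $\sum_t a_t^2\leq n|u-v|$ and applying the mgf bound directly.
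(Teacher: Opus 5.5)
Your proposal is correct and follows essentially the same route as the paper. You split the supremum at scale $\delta$, treat $h\geq\delta$ by continuous mapping from Donsker's theorem in $C[0,1]$, and make $h<\delta$ negligible via Theorem \ref{thm:subgauss-int-concentrate} with $h_0=2/n$. Condition (i) comes from the type 2 bound through the weighted-sum representation, condition (iii) from an $n$-independent tail function $G$ for the empirical Orlicz norm, and the real-valued threshold $\sigma\sqrt{8/3}$ from mgf tensorization plus Remark \ref{rem:subgauss}. The only difference is in the last step: you close with Billingsley's approximation theorem instead of the paper's one-sided Portmanteau bound, which gives both inequalities at once and also makes your separate control of $A_\delta(W)$ unnecessary, since $B_\delta(W)\uparrow|W|_{\B}$ as $\delta\downarrow 0$.
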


Theorem \ref{thm:convergence} shows that the Besov-Orlicz seminorm of $S_n$ converges in distribution, though not on its full support but only beyond a certain value $\tau$.
Note that the $\tau$ does not depend on $n$. 
This type of thresholded weak convergence has been discovered recently \citep{kohne_at_2025} when studying the Hölderian seminorm $|S_n|_{C^{\rho_{1/2,1/2}}}$ for $\rho_{1/2,1/2}(h)=\sqrt{h(\log e/h)}$, which is the regularity of the limiting Brownian motion in the Hölderian framework.
The present result allows for a sharper modulus of continuity as smoothness is measured in the functional sub-Gaussian norm, rather than uniformly. 
Moreover, the prior work \cite{kohne_at_2025} did not study infinite-dimensional random variables $X_i$.

\begin{example}\label{ex:gauss}
    Standard Gaussian random variables $X_t\sim\mathcal{N}(0,1)$ satisfy the condition of Theorem \ref{thm:convergence} with $\sigma=1$, and the threshold $\tau=\sqrt{8/3}$ matches exactly the limit in Theorem \ref{thm:modulus-Brownian}.
    In other words, $\|W\|_{\B}\geq \tau$ almost surely, and hence the convergence holds indeed on the whole support.
    In this sense, Theorem \ref{thm:convergence} is sharp.
    This also justifies to approximate $\|W\|_{\B}$ by $\|S_n\|_{\B}$ with iid standard Gaussian $X_i$.
\end{example}

\begin{example}\label{ex:cdf}
    The asymptotic results for the partial sum process can be applied to various functionals of interest in mathematical statistic. 
    We consider a recent example in the context of nonparametric changepoint detection.
    Let $Y_i$ be iid real-valued random vectors, and let $X_i = \mathds{1}(\cdot \leq Y_i)$.
    Then $\frac{1}{n}\sum_{i=1}^n X_i$ is the empirical distribution function, and $S_n$ the corresponding partial sum process. 
    If we consider $X_i$ as elements in the weighted $L_2$ space $L_2(\mu)$ for a finite measure $\mu$ on $\R^d$, then $X_i$ are bounded by one. 
    Since $L_2(\mu)$ is a Hilbert space, Theorem \ref{thm:FCLT} and \ref{thm:convergence} apply.
    This partial sum process has been employed by \cite{kutta_multiscale_2025} to design the multiscale changepoint test statistic
    \begin{align*}
        T_\rho(S_n)=\sup_{h\in[0,\frac{1}{2}]} \sup_{u\in [h,1-h]} \frac{\|[S_n(u)-S_n(u-h)] - [S_n(u+h)-S_n(u)]\|}{\rho(h)}.
    \end{align*}
    For any $\rho\gg \rho_{1/2,1/2}$, Theorem \ref{thm:FCLT} yields the weak convergence $S_n\wconv W$ in $C^{\rho}$ and thus $T_\rho(S_n)\wconv T_\rho(W)$. 
    This also covers moduli in the regime $\rho_{1/2,1/2}(h) \ll \rho(h)\ll \rho_{1/2,\nu}(h)$, which was previously excluded in the literature. 
    Inspired by the new Besov-Orlicz limit theory, we may also use the statistic
    \begin{align*}
        T_{\rho, \Psi_2}(S_n) = \inf\left\{ K\,:\, \sup_{0<h<\frac12} \int_{h}^{1-h} \Psi_2\left(\tfrac{\|[S_n(u)-S_n(u-h)] - [S_n(u+h)-S_n(u)]\|}{\rho(h)}\right)\, du \leq 1\right\}.
    \end{align*}
    As $T_{\rho,\Psi_2}$ is a continuous functional on $B^{\rho}_{\Psi_2,\infty}$, Theorem \ref{thm:FCLT} yields $T_{\rho,\Psi_2}(S_n)\wconv T_{\rho,\Psi_2}(W)$ for any $\rho(h)\gg \sqrt{h}=\rho_{1/2,0}$. 
    Moreover, arguments similar to Theorem \ref{thm:convergence} show that there exists some $\tau$ such that
    \begin{align*}
        \left(T_{\rho_{1/2,0}, \Psi_2} (S_n) \vee \tau \right) \quad\wconv\quad \left(T_{\rho_{1/2,0}, \Psi_2} (W) \vee \tau \right).
    \end{align*}
    Unlike the methodology presented by \cite{kutta_multiscale_2025}, our asymptotic theory currently does not allow for temporally dependent noise. 
\end{example}

\begin{example}
    Exact evaluation of the Besov-Orlicz seminorm of $S_n$ is not possible in practice and one needs to resort to discretization of the involved integrals.
    Instead, it is practically more feasible to consider the non-interpolated partial sum process $\widetilde{S}_n(u)=\frac{1}{\sqrt{n}} \sum_{t=1}^{\lfloor un\rfloor} X_t$, and the statistic 
    \begin{align*}
        D_{\rho,n}(\widetilde{S}_n) 
        &= \max_{m=1,\ldots, n} \omega_{\Psi_2}(\tfrac{m}{n}, \widetilde{S}_n)/\rho(m/n) \\
        &= \max_{m=1,\ldots, n} \inf\left\{K_m>0\,:\, \frac{1}{n} \sum_{a=0}^{n-m} \Psi_2\left( \frac{ \|\sum_{t=a+1}^{a+m} X_t\|  }{K_m \cdot \sqrt{n}}\right) \leq 1 \right\} \Big/ \rho(m/n),
    \end{align*}
    or the dyadic version
    \begin{align*}
        D_{\rho,n, \textsc{dyadic}}(\widetilde{S}_n) = \max_{m=\lceil 2^{-j} n\rceil, j\in \N_0} \omega_{\Psi_2}(\tfrac{m}{n}, \widetilde{S}_n)/\rho(m/n).
    \end{align*}   
    The discretized but interpolated versions $D_{\rho,n}(S_n)$ and $D_{\rho,n,\textsc{dyadic}}(S_n)$ can be defined analogously.
    Despite the discretization, these statistics admit the same limit distribution as the continuous variants
    \begin{align*}
        D_\rho(S_n)=|S_n|_{B^{\rho}_{\Psi_2,\infty}} 
            \quad \text{and}\quad
        D_{\rho,\textsc{dyadic}}(S_n) = \sup_{j\in \N_0} \omega_{\Psi_2}(2^{-j},S_n) / \rho(2^{-j}).
    \end{align*}
\end{example}

\begin{theorem}\label{eqn:CLT-discretized}
    Under the conditions of Theorem \ref{thm:FCLT}, 
    \begin{align*}
        D_{\rho,n}(S_n),\; D_{\rho,n}(\widetilde{S}_n),\; D_{\rho}(S_n)&\quad\wconv\quad D_\rho(W) ,
    \end{align*}
    i.e.\ all three statistics converge to the same limit distribution. 
    Similarly
    \begin{align*}
        D_{\rho,n,\textsc{dyadic}}(S_n),\;D_{\rho,n,\textsc{dyadic}}(S_n),\; D_{\rho,\textsc{dyadic}}(S_n)&\quad\wconv\quad D_{\rho,\textsc{dyadic}}(W).
    \end{align*}
    Moreover, for the same $K=K(\mathcal{X})$ as in Theorem \ref{thm:convergence}, and for the critical modulus $\rho(h)=\sqrt{h}$, these statistics converge beyond any threshold $\tau>(K+2)\|X\|_{\Psi_2}$. 
\end{theorem}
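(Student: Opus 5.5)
The plan is to show that each discretized statistic differs from its continuous counterpart by a term that vanishes in probability. The result then follows from Theorem \ref{thm:FCLT} together with the continuous mapping theorem, and from Theorem \ref{thm:convergence} in the critical case.

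\textbf{Step 1: the continuous statistics.} For $\rho(h)\gg\sqrt h$, the functionals $D_\rho(f)=|f|_{B^\rho_{\Psi_2,\infty}}$ and $D_{\rho,\textsc{dyadic}}(f)$ are Lipschitz seminorms on $B^{\rho}_{\Psi_2,\infty}$. Hence $D_\rho(S_n)\wconv D_\rho(W)$ and $D_{\rho,\textsc{dyadic}}(S_n)\wconv D_{\rho,\textsc{dyadic}}(W)$ follow directly from Theorem \ref{thm:FCLT} and continuous mapping.

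\textbf{Step 2: discretizing the scale.} I would compare $D_{\rho,n}(S_n)$ with $D_\rho(S_n)$. Clearly $D_{\rho,n}(S_n)\le D_\rho(S_n)$, so only the reverse inequality needs work. Take $h\in(0,1)$ and split by scale.
\begin{itemize}
\item \emph{Scales $h\le\delta$.} By Theorem \ref{thm:subgauss-int-concentrate}, applied with $h_0=2/n$ and condition (iii) supplied by the linear interpolation, $\sup_{h\le\delta}\omega_{\Psi_2}(h,S_n)/\rho(h)$ is small with high probability uniformly in $n$, because $\sqrt h/\rho(h)\to0$. The same holds for $W$.
\item \emph{Scales $h\in[\delta,1)$.} Let $m=\lfloor hn\rfloor$. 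The triangle inequality for the Orlicz norm gives
\[
|\omega_{\Psi_2}(h,S_n)-\omega_{\Psi_2}(m/n,S_n)|\le 2\,\omega_{\Psi_2}(h-m/n,S_n)+(\text{boundary term from the change of integration range }[0,1-h]),
\]
and the right-hand side is $O_P(n^{-1/2})$ by (iii). The ratio $\rho(h)/\rho(m/n)\to1$ uniformly on $[\delta,1)$ by uniform continuity of $\rho$ there.
\end{itemize}
In addition, the integral $\int_0^{1-m/n}\Psi_2(\cdot)\,dx$ defining $\omega_{\Psi_2}(m/n,S_n)$ must be compared with the Riemann sum $\frac1n\sum_a\Psi_2(\cdot)$. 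Between grid points the increment $S_n(x+m/n)-S_n(x)$ changes by at most $(\|X_{a+1}\|+\|X_{a+m+1}\|)/\sqrt n$. Since $\max_{t\le n}\|X_t\|=O_P(\sqrt{\log n})$ by sub-Gaussianity, this perturbation is $O_P(\sqrt{\log n/n})$ in sup-norm, which again is negligible relative to $\rho(h)\ge\rho(\delta)$. For the dyadic version the scale-approximation step is unnecessary: only the Riemann-sum comparison and the small-scale truncation remain, the latter for $j$ with $2^{-j}\le\delta$.

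\textbf{Step 3: the non-interpolated process.} Since $\sup_u\|\widetilde S_n(u)-S_n(u)\|\le \max_t\|X_t\|/\sqrt n=O_P(\sqrt{\log n/n})$ and $\omega_{\Psi_2}(h,\cdot)$ is Lipschitz in sup-norm (constant $2/\sqrt{\log 2}$), I get $|D_{\rho,n}(\widetilde S_n)-D_{\rho,n}(S_n)|\to0$ in probability. This follows from the Step 2 split: the small scales are handled through the $m/n\le\delta$ bound on $S_n$ and are not needed at all for $\widetilde S_n$, since $m\ge1$. The difficulty is the scale $m=1$, where $\rho(1/n)$ is tiny. There I use directly that $\frac1n\sum_t\Psi_2(\|X_t\|/K)\to\E\Psi_2(\|X\|/K)$, so $\omega_{\Psi_2}(1/n,\widetilde S_n)\approx\|X\|_{\Psi_2}/\sqrt n$.

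\textbf{Step 4: the critical case $\rho(h)=\sqrt h$.} No weak convergence is available, so I use thresholding. The small-scale ($h\le\delta$) contributions of all versions are at most $(K+\epsilon)\|X\|_{\Psi_2}$ with high probability, by Theorem \ref{thm:subgauss-int-concentrate} as in Theorem \ref{thm:convergence}. On $[\delta,1)$ the error analysis above goes through unchanged. The remaining issue, and the \textbf{main obstacle}, is $\widetilde S_n$ at the smallest scales $m/n\lesssim h_0$. There the interpolation argument fails: by the law of large numbers in Orlicz form, the extra discretization error is bounded by a constant times $\|X\|_{\Psi_2}$, and I expect this to account for the enlarged threshold $(K+2)\|X\|_{\Psi_2}$. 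Concretely, I would bound
\[
\omega_{\Psi_2}(m/n,\widetilde S_n)\le \omega_{\Psi_2}(m/n,S_n)+2\Big\|\max_{t}\|X_t\|\Big\|/\sqrt n
\]
at the level of the empirical Orlicz norm, giving at most $(K+2)\|X\|_{\Psi_2}\sqrt{m/n}$ with high probability. Thresholding at $\tau>(K+2)\|X\|_{\Psi_2}$ then reduces everything to the scales $[\delta,1)$, where the finite-dimensional-in-scale functional converges by Theorem \ref{thm:FCLT} with $\rho$ replaced by any slightly larger modulus on $[\delta,1)$.
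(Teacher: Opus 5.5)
Your architecture matches the paper's. Small scales are controlled uniformly in $n$ by Theorem~\ref{thm:subgauss-int-concentrate} applied to $S_n$ with $h_0=2/n$. Scales in $[\delta,1)$ are handled by Theorem~\ref{thm:FCLT} and continuity of $\rho$. Finally, $\widetilde S_n$ is compared with $S_n$.

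\textbf{The gap is in Step~3.} The uniform bound $\sup_u\|\widetilde S_n(u)-S_n(u)\|\le\max_t\|X_t\|/\sqrt n$ is only $O_P(\sqrt{\log n/n})$, and this order is attained for Gaussian $X_t$. It therefore controls $\omega_{\Psi_2}(m/n,\widetilde S_n)/\rho(m/n)$ only where $\rho(m/n)\gg\sqrt{\log n/n}$. Take a modulus in exactly the new regime the theorem targets, e.g.\ $\rho(h)=\sqrt h\,\log\log(e^e/h)$. There the bound fails for every fixed $m\ge 2$, and indeed for all $m$ up to order $\log n/(\log\log n)^2$, not just for $m=1$. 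Your law-of-large-numbers patch covers only $m=1$. The remark that small scales are handled through the bound on $S_n$ presupposes exactly the transfer from $S_n$ to $\widetilde S_n$ that the sup-norm estimate cannot deliver at these scales. Nor can they be skipped, since $D_{\rho,n}(\widetilde S_n)$ contains all terms with $m/n<\delta$.

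\textbf{The missing idea} is to measure the interpolation error in the $\Psi_2(dx)$-norm rather than uniformly. Since $\|S_n(u)-\widetilde S_n(u)\|\le\|X_{\lceil un\rceil}\|/\sqrt n$, one gets $\|S_n-\widetilde S_n\|_{\Psi_2(dx)}\le\hat K_n/\sqrt n$. Here $\hat K_n=\inf\{K:\frac1n\sum_{t=1}^n\Psi_2(\|X_t\|/K)\le1\}$ is the empirical Orlicz norm, and $\limsup_n\hat K_n\le\|X\|_{\Psi_2}$ almost surely by the strong law. Combined with $|\omega_{\Psi_2}(h,f)-\omega_{\Psi_2}(h,g)|\le2\|f-g\|_{\Psi_2(dx)}$, this yields
\[
\limsup_{n\to\infty}\sqrt n\,\sup_{h\in(0,1)}\bigl|\omega_{\Psi_2}(h,S_n)-\omega_{\Psi_2}(h,\widetilde S_n)\bigr|\le2\|X\|_{\Psi_2}\quad\text{a.s.}
\]
This holds uniformly over all scales. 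Since $\rho(m/n)\ge\rho(1/n)\gg n^{-1/2}$, every $m$ is handled at once. This is the paper's key estimate.

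It is also what your Step~4 display has to mean for the threshold $(K+2)\|X\|_{\Psi_2}$ to appear. Read literally, $\|\max_t\|X_t\|\|_{\Psi_2}/\sqrt n$ is of order $\sqrt{\log n/n}$ for Gaussian data and would blow up relative to $\sqrt{m/n}$ at $m=1$. Using the $\Psi_2(dx)$ bound in Step~3 as well closes the gap.

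A minor point on Step~2: your treatment of the scale rounding on $[\delta,1)$ via condition (iii) is fine. However, the boundary term from the change of integration range is an $O_P(1)$ function on an interval of length $<1/n$. It is therefore $O_P((\log n)^{-1/2})$ rather than $O_P(n^{-1/2})$, which still suffices there.
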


\section{Relation to the signal discovery problem}\label{sec:signal-discovery}

As a particular application of our analytical results in mathematical statistic, we consider the nonparametric regression model 
\begin{align}
    Y_t = f(\tfrac{t}{n}) + \eta_t, \qquad t=1,\ldots, n, \label{eqn:model-regression}
\end{align}
for centered, sub-Gaussian random variables $\eta_t$ with unit variance.
When performing inference on the unknown function $f$, a  fundamental problem to test the simple null hypothesis $\Hyp_0:f=f_0$ for a given $f_0$.
We focus without loss of generality on the canonical case $f_0\equiv 0$, which is also also known as the signal discovery problem, see \cite{spokoiny_adaptive_1998, walther_calibrating_2022} and \cite[Sec.~6.2.3]{Gine2016}.
Via test inversion for all candidate signals $f$ in nonparametric class of functions $\F$, statistical methods for this fundamental signal discovery problem readily enable goodness-of-fit testing, nonparametric confidence intervals, and localization of structural breaks, as outlined elsewhere \citep{kohne_at_2025}. 
Hence, the goal is to develop a test which is consistent against a rich nonparametric class of alternative signals $f$.

As a motivating example, consider the sequence of local alternatives $f_n(u) = \delta_n \mathds{1}(u\in [a_n, b_n]) $ for $0\leq a_n < b_n\leq 1$ and $\delta_n>0$, $|b_n-a_n|\geq \frac{1}{n}$.
This alternative can be detected with the global scan statistic $\max_I T_n(I)$ where $T_n(I) = |\sum_{t\in I} Y_t|/\sqrt{I}$ for any integer interval $I\subset [1,n]$. 
Assuming standard Gaussian errors, and upon choosing a suitable critical value of order $\sqrt{\log n}$ based on an extreme value limit distribution, the latter test is consistent in the regime $n \delta_n^2 |b_n-a_n| > 2\log(n)$ \citep{sharpnack_exact_2016}. 
A better detection threshold is achieved by the multiscale scan statistic of \citet{dumbgen_multiscale_2001} of the form $T_n^{\textsc{ds}} = \max_I T_n(I)-\sqrt{2\log (en/|I|)}$, which yields a consistent test if $n \delta_n^2 |b_n-a_n| > 2\log(e/|b_n-a_n|)$.
Since its inception, the concept of multiscale scan statistics has been exploited and developed by various researchers, e.g.\ for changepoint detection \citep{frick2014multiscale} and density estimation \citep{dumbgen_multiscale_2008}.
For non-Gaussian errors, the distribution of $T_n^{\textsc{ds}}$ can be approximated by its Gaussian analogue, but only if a lower bound on the interval length $I$ is imposed \citep{frick2014multiscale, dette_multiscale_2020}.
Thus, as discussed in \cite{kohne_at_2025}, the method does not achieve the minimax rate of detection against very short signals. 
Instead, it is argued therein that the multiplicatively weighted multiscale statistic
\begin{align*}
    T_n^{\textsc{km}} = \max_{I \subset [1,n]} \frac{T_n(I)}{\sqrt{\log (en/|I|)}},
\end{align*}
is more robust, as the central limit theorem applies to $T_n^{\textsc{km}}$ without a lower bound on the interval length. 
Indeed, the statistic $T_n^{\textsc{km}}$ yields a consistent test if $n \delta_n^2 |b_n-a_n| \gg \log(e/|b_n-a_n|)$, where the slightly stronger constraint is the price of not assuming Gaussianity.

A fundamental limitation of these supremum-type statistics is that they only use local information.
If there are many disjoint intervals $I$ for which $T_n(I)$ almost exceeds the critical value, the multiscale test will still not reject the null. 

\begin{proposition}\label{prop:sin}
    Let the errors $\eta_t$ be standard Gaussian, and consider the sequence of alternatives $f_n = \delta_n \sin(u/l_n)$ with $\delta_n, l_n\to 0$ such that $1\ll n \delta_n^2 l_n \ll \log(1/l_n)$.
    Then $T_n^{\textsc{ds}}$ and $T_n^{\textsc{km}}$ are stochastically bounded, i.e.\ the test fails to be consistent.
\end{proposition}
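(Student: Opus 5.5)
We bound both statistics by splitting each interval sum into noise and signal. For an integer interval $I$ of length $m=|I|$ write $T_n(I)\le |\sum_{t\in I}\eta_t|/\sqrt{m} + |\sum_{t\in I} f_n(t/n)|/\sqrt{m}$. The noise part is handled by the null-hypothesis theory. Under $\Hyp_0$, $T_n^{\textsc{ds}}$ is stochastically bounded with Gaussian errors, by \citet{dumbgen_multiscale_2001}. Likewise $T_n^{\textsc{km}}$ is stochastically bounded: it is bounded by a constant multiple of the Hölderian seminorm $|\widetilde S_n|_{C^{\rho_{1/2,1/2}}}$, which is $\mathcal{O}_P(1)$ by Lévy's modulus \eqref{eqn:levy}, realised through the exact embedding of Gaussian partial sums into Brownian motion. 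Since $T_n^{\textsc{km}}$ is monotone in $T_n(I)$, it remains only to show that the deterministic signal contribution $B_n(I)=|\sum_{t\in I}f_n(t/n)|/\sqrt m$ satisfies
\[
\sup_I \frac{B_n(I)}{\sqrt{\log(en/m)}}=o(1) \quad\text{for } T_n^{\textsc{km}}, \qquad \sup_I\bigl(\text{noise}_I+B_n(I)-\sqrt{2\log(en/m)}\bigr)=\mathcal{O}_P(1) \quad\text{for } T_n^{\textsc{ds}}.
\]
For the latter it suffices that $B_n(I)=o(\sqrt{\log(en/m)})$ uniformly. Indeed, $\text{noise}_I-\sqrt{2\log(en/m)}$ is $\mathcal{O}_P(1)$ uniformly, and adding a term of size $o(\sqrt{\log(en/m)})$ only threatens boundedness on scales where the penalty is large. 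More precisely, I would use the D\"umbgen--Spokoiny bound in its slightly stronger form: $\text{noise}_I\le\sqrt{2\log(en/m)}+C\log\log(en/m)/\sqrt{\log(en/m)}$ with high probability. It then suffices to show $B_n(I)=\mathcal{O}(1)$, or at least $o(1)+\mathcal{O}(1)$, uniformly in $I$.

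The key step is a uniform bound on $B_n(I)$, using the oscillation of the sine. Set $\theta=1/(nl_n)$, the per-step phase increment, and assume $nl_n\to\infty$; this follows from $n\delta_n^2 l_n\gg1$ if $\delta_n$ is bounded. The geometric-sum formula gives $|\sum_{t\in I}\sin(t\theta)|\le \min\bigl(m,\ 1/|\sin(\theta/2)|\bigr)\lesssim \min(m,nl_n)$. Hence
\[
B_n(I)\lesssim \delta_n \min\bigl(\sqrt m,\ nl_n/\sqrt m\bigr)\le \delta_n\sqrt{nl_n}=\sqrt{n\delta_n^2 l_n},
\]
with the maximum attained at $m\asymp nl_n$. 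At that scale $\log(en/m)\asymp\log(1/l_n)$, so $B_n(I)/\sqrt{\log(en/m)}\lesssim\sqrt{n\delta_n^2l_n/\log(1/l_n)}\to0$. For other scales the bound on $B_n$ only decreases, geometrically in $m/(nl_n)$ or $(nl_n/m)^{1/2}$ away from the peak, while $\log(en/m)$ can only shrink at large $m$; there the decay $nl_n/\sqrt m$ dominates. This already gives $T_n^{\textsc{km}}=\mathcal{O}_P(1)$, since it is bounded by the null statistic plus $o(1)$.

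The main obstacle is $T_n^{\textsc{ds}}$, where the additive calibration is tight. Near $m\asymp nl_n$ the signal adds $\sqrt{n\delta_n^2l_n}\to\infty$ on top of noise already at the level $\sqrt{2\log(en/m)}$. So I must show that the signal can only help on intervals whose noise is bounded away from that level, and the plan is to exploit the number of competing intervals. There are about $n/m$ roughly independent intervals at scale $m$. The probability that any one of them has noise exceeding $\sqrt{2\log(en/m)}-x$, with $x=\sqrt{n\delta_n^2l_n}$, is about $(n/m)\exp\bigl(-\log(en/m)+x\sqrt{2\log(en/m)}\bigr)$. This can be large, so the naive argument might fail. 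I would therefore examine the condition more carefully: the intended reading is probably that the signal energy $x^2\ll\log(1/l_n)$ makes $x\sqrt{2\log}$ comparable to the penalty only when $x\gtrsim\sqrt{\log}$. If the direct argument breaks down, I would fall back to the Dümbgen--Spokoiny detection boundary, which says that bump-like signals with $n\delta^2|I|<(2-\epsilon)\log(e/|I|)$ are undetectable. I would apply it scale by scale, noting that on scale $nl_n$ the signal is locally bump-like with energy $n\delta_n^2l_n\ll\log(1/l_n)$, below the boundary.
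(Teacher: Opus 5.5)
Your argument for $T_n^{\textsc{km}}$ is fine. It is essentially all the paper offers for this proposition: there is no proof in the appendix, only the remark that $|\sqrt{n}F_n|_{C^{\rho_{1/2}}}\asymp\sqrt{n\delta_n^2 l_n/\log(1/l_n)}\to 0$. The geometric-sum bound gives $\sup_I B_n(I)/\sqrt{\log(en/|I|)}\lesssim\sqrt{n\delta_n^2l_n/\log(1/l_n)}\to0$; for $m\geq nl_n$, use that $m\log(en/m)$ is increasing on $[1,n]$. The noise part is dominated by the L\'evy-type seminorm of the Brownian motion into which the Gaussian partial sums embed exactly.

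The gap is $T_n^{\textsc{ds}}$, and it cannot be closed, because that half of the statement is false. Three steps of your plan fail:
\begin{itemize}
\item Your reduction \emph{it suffices that $B_n(I)=o(\sqrt{\log(en/m)})$} is wrong for an additively calibrated statistic.
\item The refined target $B_n(I)=\mathcal{O}(1)$ fails, since $\sup_I B_n(I)\asymp\sqrt{n\delta_n^2l_n}\to\infty$.
\item The single-bump detection boundary does not transfer scale by scale. The sine has $\asymp 1/l_n$ bumps, and the penalty $\sqrt{2\log(en/|I|)}$ is exactly the level that the maximal noise over $\asymp 1/l_n$ disjoint intervals already reaches.
\end{itemize}

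Concretely, let $I_1,\ldots,I_K$, $K\asymp 1/(2\pi l_n)$, be the integer intervals on which $t/(nl_n)$ runs through a positive half-period $[2\pi k,2\pi k+\pi)$. Since $\delta_n\to0$ forces $nl_n\to\infty$, we have $|I_k|=\pi nl_n(1+o(1))$. The normalized noises $Z_k=|I_k|^{-1/2}\sum_{t\in I_k}\eta_t$ are iid $\mathcal{N}(0,1)$. Each interval carries signal $|I_k|^{-1/2}\sum_{t\in I_k}f_n(t/n)\geq x_n$, where $x_n\sim\tfrac{2}{\sqrt{\pi}}\sqrt{n\delta_n^2l_n}$. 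Hence
\[
T_n^{\textsc{ds}}\;\geq\;\max_{k\leq K}\bigl(Z_k+x_n\bigr)-\sqrt{2\log\bigl(e/(\pi l_n)\bigr)}-o(1)\;\geq\;x_n-2
\]
with probability tending to one. This holds because $\max_{k\leq K}Z_k\geq\sqrt{2\log K}-1$ with probability tending to one, and $\sqrt{2\log K}-\sqrt{2\log(e/(\pi l_n))}\to0$.

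Thus $T_n^{\textsc{ds}}\to\infty$ in probability whenever $n\delta_n^2l_n\to\infty$, i.e.\ the D\"umbgen--Spokoiny test is consistent against this alternative. Only the claim about $T_n^{\textsc{km}}$ can be proved. Your own heuristic in the last paragraph was pointing at exactly this; the correct conclusion is a counterexample, not a refined bound.
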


To connect this statistical problem to the regularity of Brownian motion, we note that model \eqref{eqn:model-regression} is equivalent to observing $\frac{1}{\sqrt{n}}\sum_{t=1}^k Y_t = \frac{1}{\sqrt{n}}\sum_{t=1}^k f(t) + \frac{1}{\sqrt{n}} \sum_{t=1}^k \eta_t$ for $k=1,\ldots, n$, which closely resembles the idealized Gaussian white noise model 
\begin{align}
    Y_n(u) = \sqrt{n}\int_0^u f(v)\ dv + W_v, \quad u\in[0,1], \label{eqn:GWN}
\end{align}
where $W_v$ is a Brownian motion; see \citet[Sec.~1.10]{Tsybakov2008}. 
In this setting, the statistic $T_n^{\textsc{km}}$ corresponds to the Hölder seminorm $|Y|_{C^{\rho_{1/2}}} = \sup_{u,v\in[0,1]} |Y(u)-Y(v)|/\rho_{1/2}(|u-v|) $ with $\rho_{1/2}(h)=\sqrt{h\log(e/h)}$, and the respective hypothesis test is consistent against any sequence of alternatives $f_n$ such that $|\sqrt{n}F_n|_{C^{\rho_{1/2}}}\to\infty$, where $F_n(u)=\int_0^u f_n(v)\, dv$.
For $f_n=\delta_n \sin(u/l_n)$ as in Proposition \ref{prop:sin}, we have $|F_n|_{C^{\rho_{1/2}}} \propto \sqrt{\delta_n^2 l_n / \log(1/l_n)}$, which reveals the regime where the test is inconsistent.

By moving from supremum-type statistics to functionals based on Orlicz norms, it is possible to make better use of the global structure of the signal.
Thus, we suggest to test for $f\equiv 0$ via the test statistic $|Y|_{\B}$, and use the quantiles of the random variable $|W|_{\B}$ as critical values. 
This yields a consistent test if $|\sqrt{n}F_n|_{\B}\to \infty$. 
Because the embedding $C^{\rho_{1/2}}\hookrightarrow \B$ is strict, this statistic is powerful against a broader range of alternatives $f_n$ where $|\sqrt{n}F_n|_{\B}\to \infty$ while $|\sqrt{n}F_n|_{C^{\rho_{1/2}}}\to 0$.
This specifically helps for the signals introduced above, without making explicit use of the periodicity.
It turns out that the nonparametric Besov-Orlicz test detects the sinusoidal alternative if $n \delta_n^2l_n\gg 1$, where classical multiscale tests fail.
This is a consequence of the following more general result about lower bounds on Besov-Orlicz norms.

\begin{proposition}\label{prop:lb-signal-norm-2}
    Let $f:[0,1]\to \R$ and $I_1,\ldots, I_m \subset [0,1]$ be disjoint intervals such that $|I_k|\geq 2 h$ for each $k$, and either $f(u)\geq \delta$ for all $u\in I_k$, or $f(u)\leq -\delta$ for all $u\in I_k$.
    Then
    \begin{align*}
        |F|_{B^\rho_{\Psi,\infty}} \geq \frac{\delta h}{\rho(h) \Psi^{-1}(\tfrac{1}{mh})}.
    \end{align*}
    In particular, for $\rho_\nu(h) = \sqrt{h} |\log h|^{1/\nu}$,
    \begin{align*}
    	|F|_{\B} &\geq  \delta \sqrt{h} |\log \tfrac{e}{mh}|^{-\frac{1}{2}},
    	\qquad
    	&|F|_{B^{\rho_\nu}_{\infty,\infty}} &\geq  \delta \sqrt{h} |\log \tfrac{e}{h}|^{-\frac{1}{\nu}},\\
    	|F|_{B^{1/2}_{p,\infty}} &\geq \delta \sqrt{h} (mh)^{\frac{1}{p}},
    	&|F|_{B^{\rho_\nu}_{\Psi_p,\infty}} 
    	&\geq 	
    	\delta \sqrt{h} |\log\tfrac{e}{h}|^{-\frac{1}{\nu}} |\log \tfrac{e}{mh}|^{-\frac{1}{p}}.
    \end{align*}
\end{proposition}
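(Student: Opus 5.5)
Throughout, $F(u)=\int_0^u f(v)\,dv$. The plan is to bound the seminorm from below by its value at the single scale $h$. By definition,
\[
|F|_{B^\rho_{\Psi,\infty}} \geq \frac{\omega_\Psi(h,F)}{\rho(h)},
\]
so it suffices to show $\omega_\Psi(h,F)\geq \delta h/\Psi^{-1}(1/(mh))$.

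\emph{Step 1: large increments on a set of positive measure.} Fix an interval $I_k=[a_k,b_k]$ with $b_k-a_k\geq 2h$. For every $x\in[a_k,b_k-h]$ the segment $[x,x+h]$ lies in $I_k$, on which $f$ has constant sign and $|f|\geq\delta$. Hence
\[
|F(x+h)-F(x)| = \Bigl|\int_x^{x+h} f\Bigr|\geq \delta h .
\]
This set of $x$ has length at least $|I_k|-h\geq h$, and it lies in $[0,1-h]$. Because the $I_k$ are disjoint, the sets $[a_k,b_k-h]$ are disjoint as well. Their union $A$ therefore has Lebesgue measure at least $mh$.

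\emph{Step 2: inversion of the Orlicz condition.} Let $K$ be admissible in the definition of $\omega_\Psi(h,F)$. Since $\Psi\geq 0$ and $\Psi$ is increasing,
\[
1\geq \int_0^{1-h}\Psi\Bigl(\tfrac{|F(x+h)-F(x)|}{K}\Bigr)dx \geq mh\,\Psi(\delta h/K).
\]
It follows that $\delta h/K\leq \Psi^{-1}(1/(mh))$, i.e.\ $K\geq \delta h/\Psi^{-1}(1/(mh))$. Taking the infimum over admissible $K$ gives the general bound.

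\emph{Step 3: the special cases.} Each follows by evaluating $\Psi^{-1}$.
\begin{itemize}
\item For $\Psi_2$: $\Psi_2^{-1}(y)=\sqrt{\log(1+y)}$ and $\log(1+1/(mh))\leq\log(e/(mh))$ when $mh\leq1$; combined with $\rho(h)=\sqrt h$ this gives $\delta\sqrt h\,|\log\frac{e}{mh}|^{-1/2}$.
\item For $\Psi_p$: $\Psi_p^{-1}(y)=\log(1+y)^{1/p}$, which gives the factor $|\log\frac{e}{mh}|^{-1/p}$. Dividing by $\rho_\nu(h)=\sqrt h\,|\log h|^{1/\nu}$ rather than $\sqrt h$ produces the factor $|\log\frac eh|^{-1/\nu}$, up to replacing $|\log h|$ by $\log(e/h)$, which is larger and so only weakens the bound.
\item For the polynomial case $\Psi(x)=x^p$: $\Psi^{-1}(y)=y^{1/p}$, giving $\delta\sqrt h\,(mh)^{1/p}$.
\item For the supremum case $\Psi=\infty\cdot\mathds 1(x>1)$: $\Psi^{-1}\equiv1$ on $(0,\infty)$, or one argues directly with the sup norm, giving $\delta h/\rho_\nu(h)$.
\end{itemize}

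The main obstacle is bookkeeping rather than substance. One must check that $mh\leq 1$, which holds because the disjoint $I_k$ have total length at least $2mh\leq1$, so the logarithms are well defined. One must also be consistent with the paper's convention $\log(e/\cdot)$ versus $|\log\cdot|$, which affects only constants.
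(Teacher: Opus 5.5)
Your proposal is correct and follows the paper's proof: bound the seminorm below by its value at the single scale $h$, note that $|F(x+h)-F(x)|\geq \delta h$ on a set of measure at least $mh$, and invert $\Psi$. The special cases then follow from $\Psi_p^{-1}(y)=\log(1+y)^{1/p}$, $\log(1+\tfrac{1}{mh})\leq \log\tfrac{e}{mh}$ and $|\log h|\leq \log\tfrac{e}{h}$. Two points of yours are slightly more careful than the paper: you treat the sup-norm and polynomial cases directly, where the paper appeals to limiting cases, and you check explicitly that $2mh\leq 1$, which the paper leaves implicit.
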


To compare the Besov-Orlicz statistic with the usual multiscale statistic, we contrast the lower bounds for the seminorms of $F$ in $\B$ and $B_{\infty,\infty}^{\rho_2}$ and find that the former is bigger if $m$ is large. 
For $m\propto 1/h$, the difference amounts to a factor $\sqrt{|\log h|}$ which enables the detectability of the sine signal in the regime $1\ll n \delta_n^2 l_n \ll \log(1/l_n)$.
We highlight that the sine signal only serves as illustration, while Proposition \ref{prop:lb-signal-norm-2} illustrates the consistency of the procedure against a broader range of alternatives. 
In particular, our test is nonparametric and does not use the specific sinusoidal shape.

To apply these ideas beyond the idealized white noise model, and rather to the original regression model \eqref{eqn:model-regression}, we consider the interpolated partial sum process $S_n(u)$ and use the Besov-Orlicz test statistic $T_n^{\textsc{bo}} = |\sqrt{n}S_n|_{\B}$. 
By virtue of Theorem \ref{thm:convergence}, $T_n^{\textsc{bo}}$ converges in distribution to the Besov-Orlicz seminorm of $W$. 
That is, our new functional central limit theorem makes the connection between the regression model \eqref{eqn:model-regression} and the idealized white noise model \eqref{eqn:GWN} precise.

We illustrate the difference of using the Besov-Orlicz versus the Hölder as statistic via a simulation.
First, we consider the power of the two test statistics in the white-noise model.
For the simulation, we choose a discretization grid of mesh size $10^{-5}$ for the discretization of the Brownian motion, the integrals for the Besov-Orlicz norm, and as evaluation points of the Hölder norm.
Moreover, to save computational time, we evaluate the supremum $h\in(0,1)$ only on a dyadic grid.
With this setup, each test statistic is simulated multiple times under the null, i.e.\ without a signal, and under the alternative, i.e.\ with a signal. 
The p-values under the alternative are computed based on the simulation of the null case, and thus corrected for discretization effects.

To show that the benefits of the Besov-Orlicz statistic are not restricted to the sine function, we consider the following signal, inspired by Proposition \ref{prop:lb-signal-norm-2}:
For a length scale $l$ and a magnitude $\delta$, we set 
\begin{align*}
    f_{\textsc{flip}}(x) = \sum_{k=0}^{\lceil 1/(2l)\rceil} \eta_k \left[\mathds{1}(x\in[2kl, (2k+1)l) -  \mathds{1}(x\in[(2k+1)l, 2(k+1)l)\right],
\end{align*}
where $\eta_k$ are iid Rademacher random variables, see
Figure \ref{fig:flipping_power}(a).
In every simulation round, the signs $\eta_k$ are sampled independently. 
For each lengthscale, the magnitude is chosen such that $n\delta^2 l = |\log_2 l|^{1/4}$.
Thus, as $l\to 0$, Proposition \ref{prop:sin} and \ref{prop:lb-signal-norm-2} suggests that the Besov-Orlicz statistic should be consistent, while the Hölder statistic is inconsistent.
This is in line with the empirical performance as shown in Figure \ref{fig:flipping_power}, which demonstrates the power of the test as a function of the significance level, i.e.\ the ROC curve.

\begin{figure}
    \centering
    \begin{subfigure}[T]{0.45\textwidth}
         \centering
         \includegraphics[width=\textwidth]{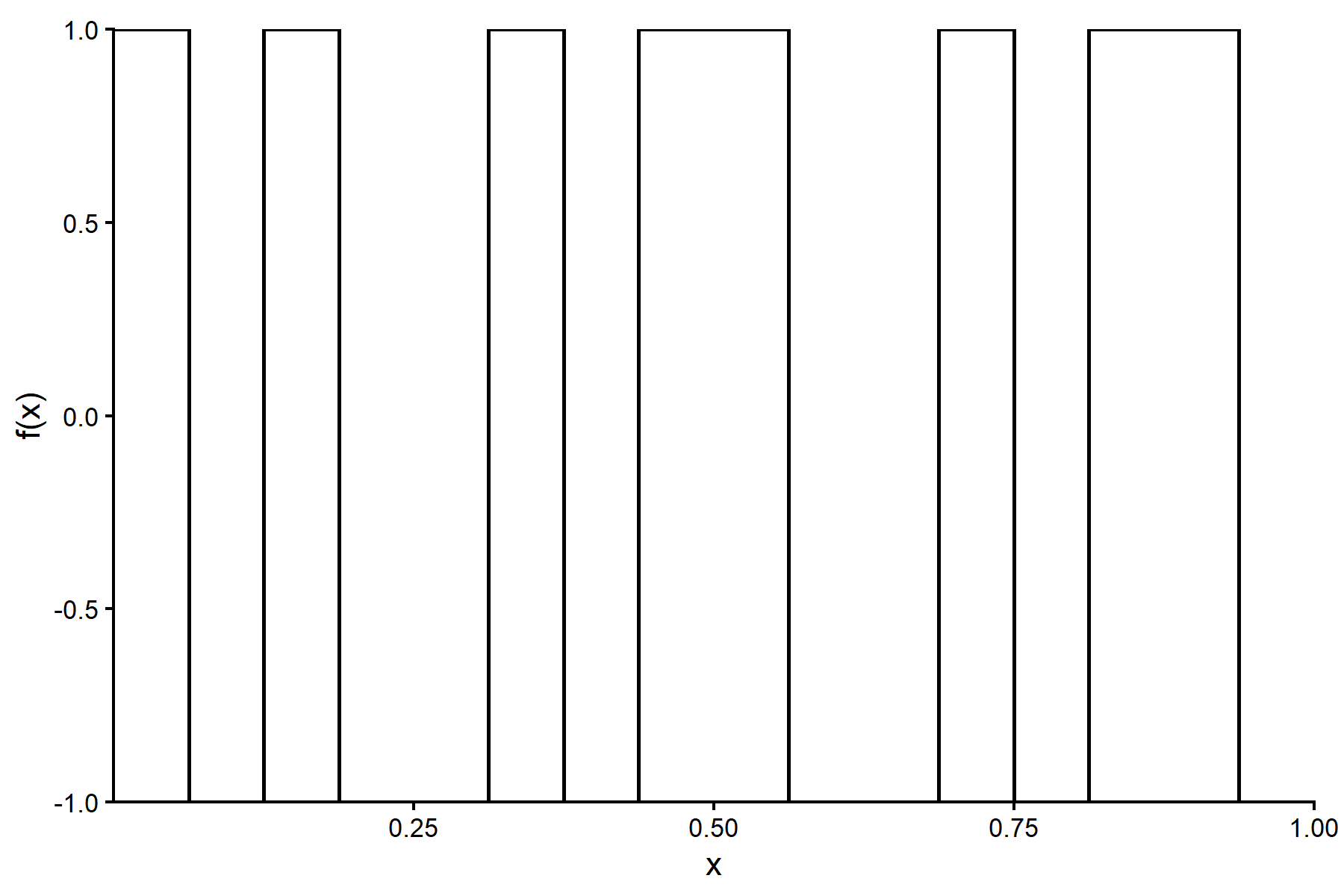}
         \caption{Signal $f_{\textsc{flip}}$ with length scale $l=2^{-4}$.}
         \label{fig:y equals x}
     \end{subfigure}
     \begin{subfigure}[T]{0.45\textwidth}
         \centering
         \includegraphics[width=\textwidth]{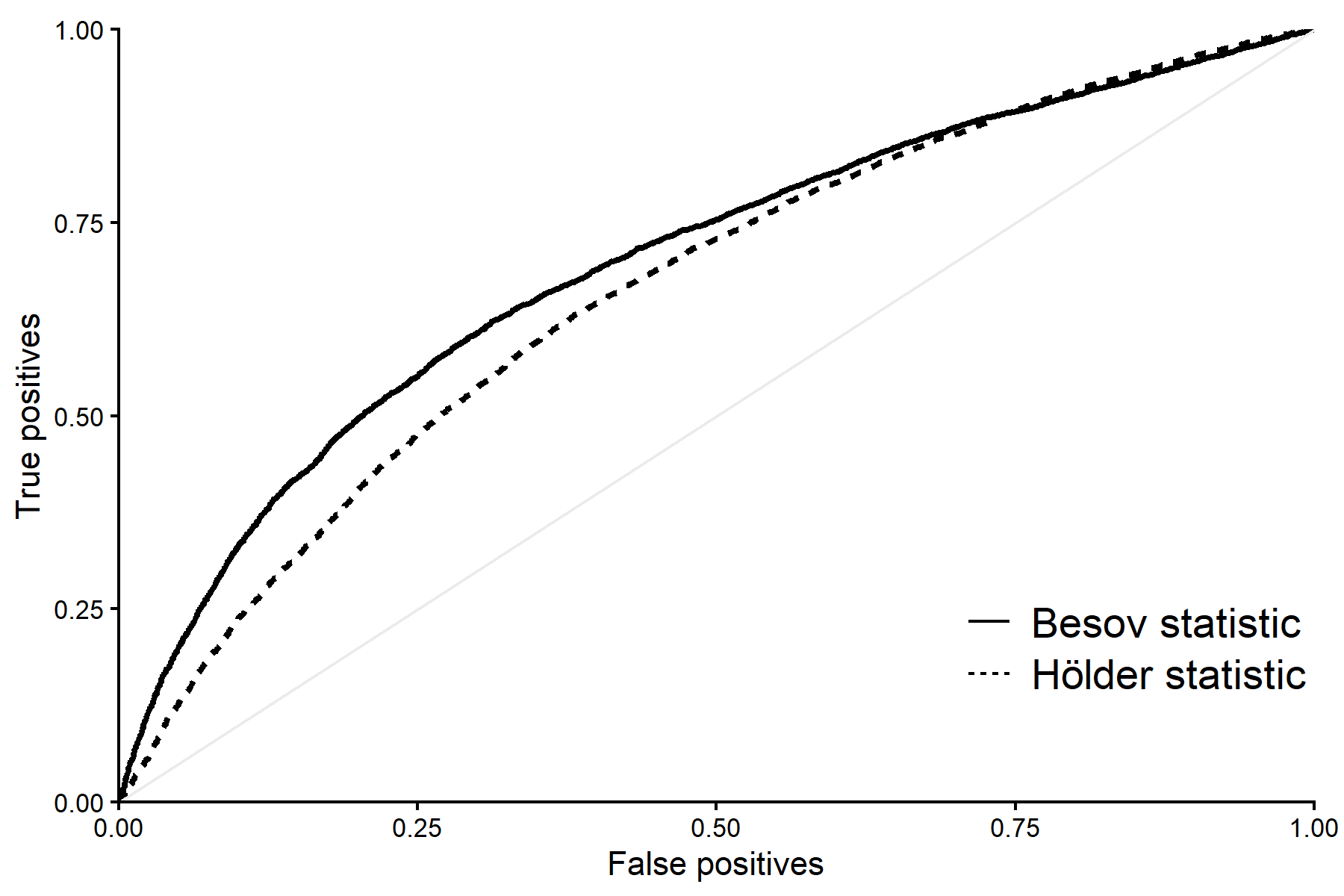}
         \caption{Power against $f_{\textsc{flip}}$ with $l=2^{-2}$.}
     \end{subfigure}
     \begin{subfigure}[T]{0.45\textwidth}
         \centering
         \includegraphics[width=\textwidth]{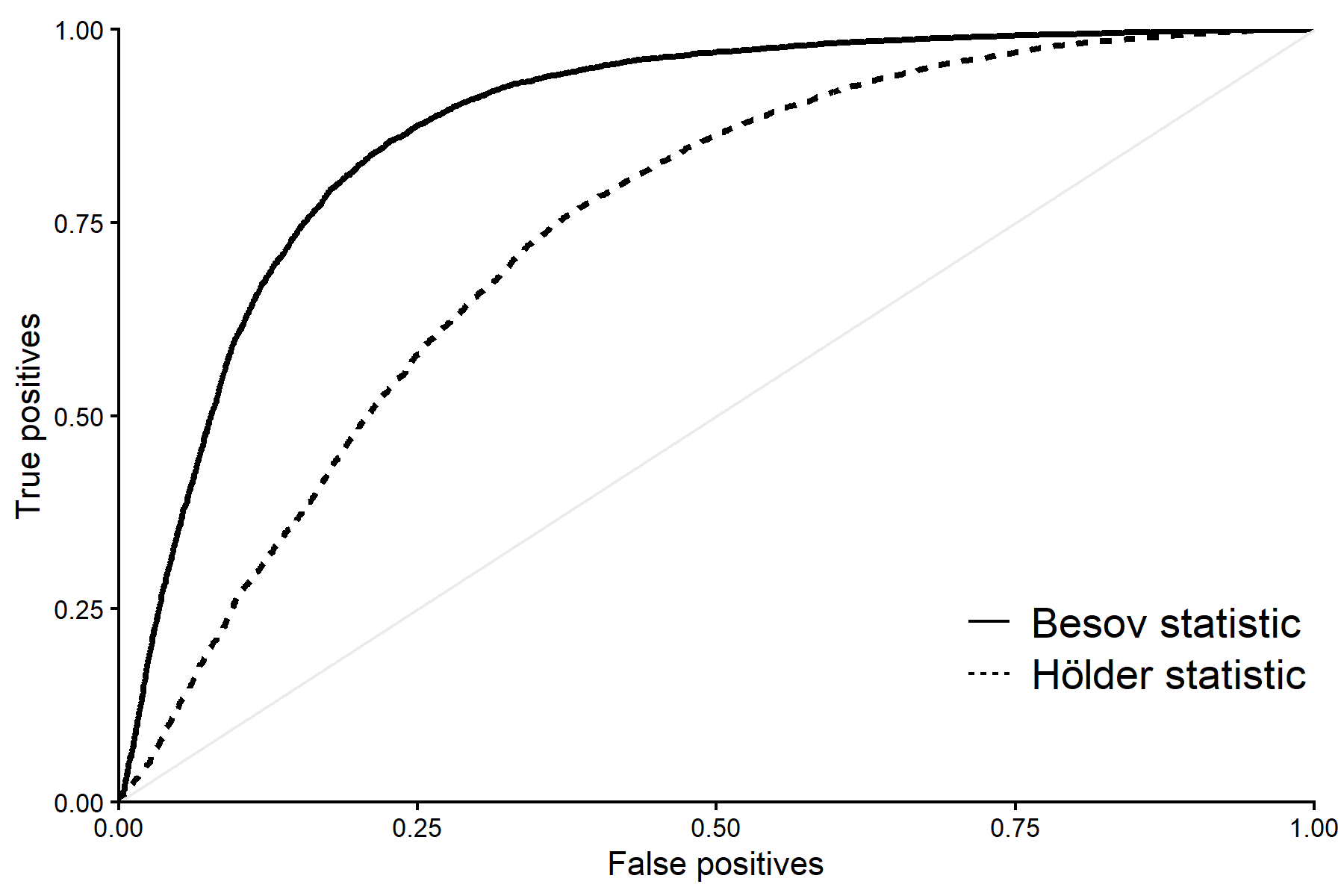}
         \caption{Power against $f_{\textsc{flip}}$ with $l=2^{-4}$.}
     \end{subfigure}
     \begin{subfigure}[T]{0.45\textwidth}
         \centering
         \includegraphics[width=\textwidth]{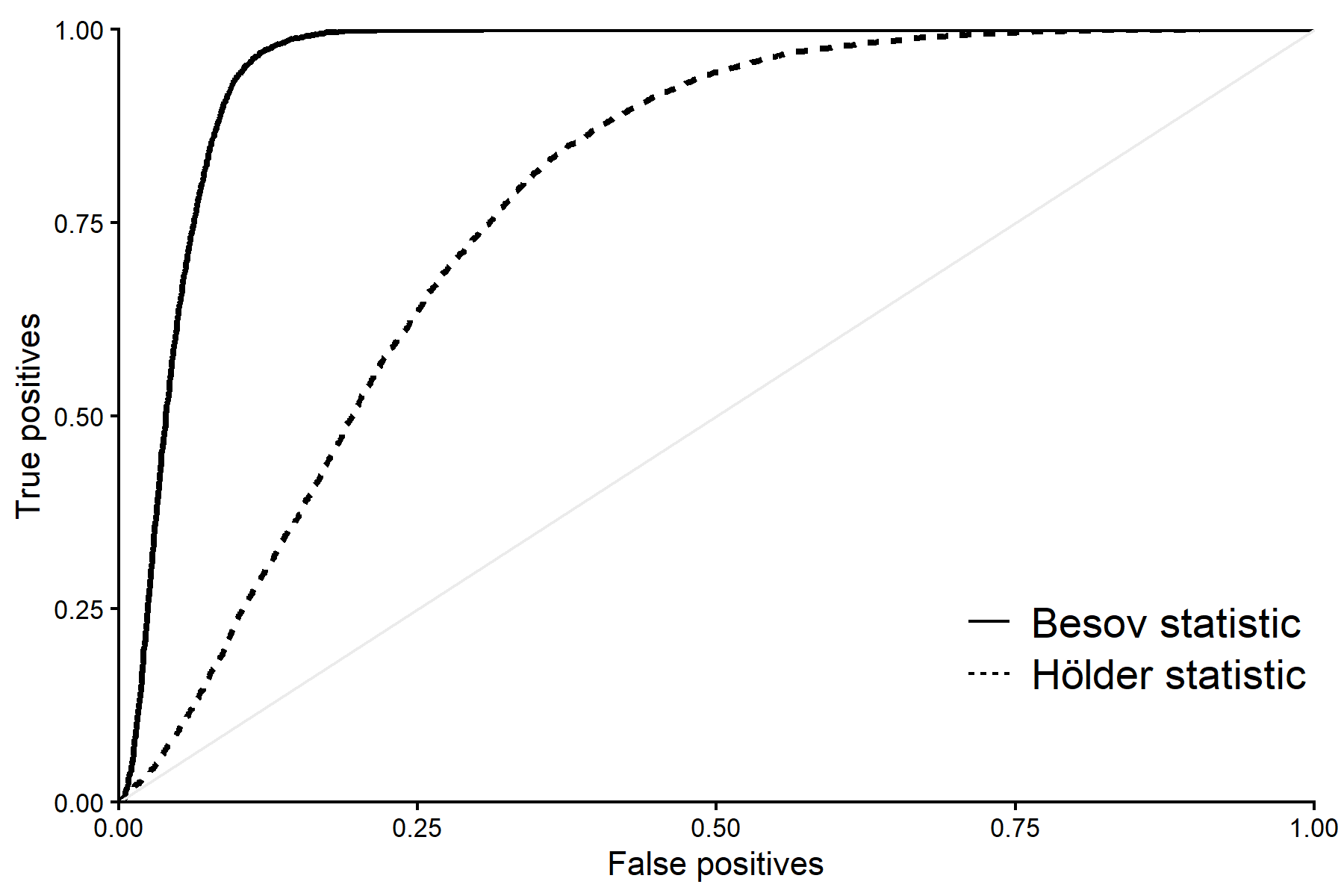}
         \caption{Power against $f_{\textsc{flip}}$ with $l=2^{-6}$.}
     \end{subfigure}
    \caption{Example test signal (a) and ROC curves (b-d) for the Besov-Orlicz statistic $\|Y_n\|_{\B}$ and the Hölder statistic $\|Y_n\|_{C^{\rho_{1/2}}}$ in the white noise model, with alternatives $f_{\textsc{flip}}$. Reported quantities are based on $10^4$ Monte Carlo simulations.}
    \label{fig:flipping_power}
\end{figure}

As a second example, we consider the Doppler function $f_{\textsc{doppler}}(x) = \delta  \sin(4/x)$ for different $\sqrt{n}\delta\in\{2, 4, 8\}$, see Figure \ref{fig:doppler_power}(a).
In this case, both statistics perform comparably.
This shows that the Besov-Orlicz statistic yields stronger tests is certain regimes, and generally comparable tests, which reflects the fact that $\B\hookrightarrow C^{\rho_{1/2}}$ is a strict, continuous embedding.

\begin{figure}
    \centering
    \begin{subfigure}[T]{0.45\textwidth}
         \centering
         \includegraphics[width=\textwidth]{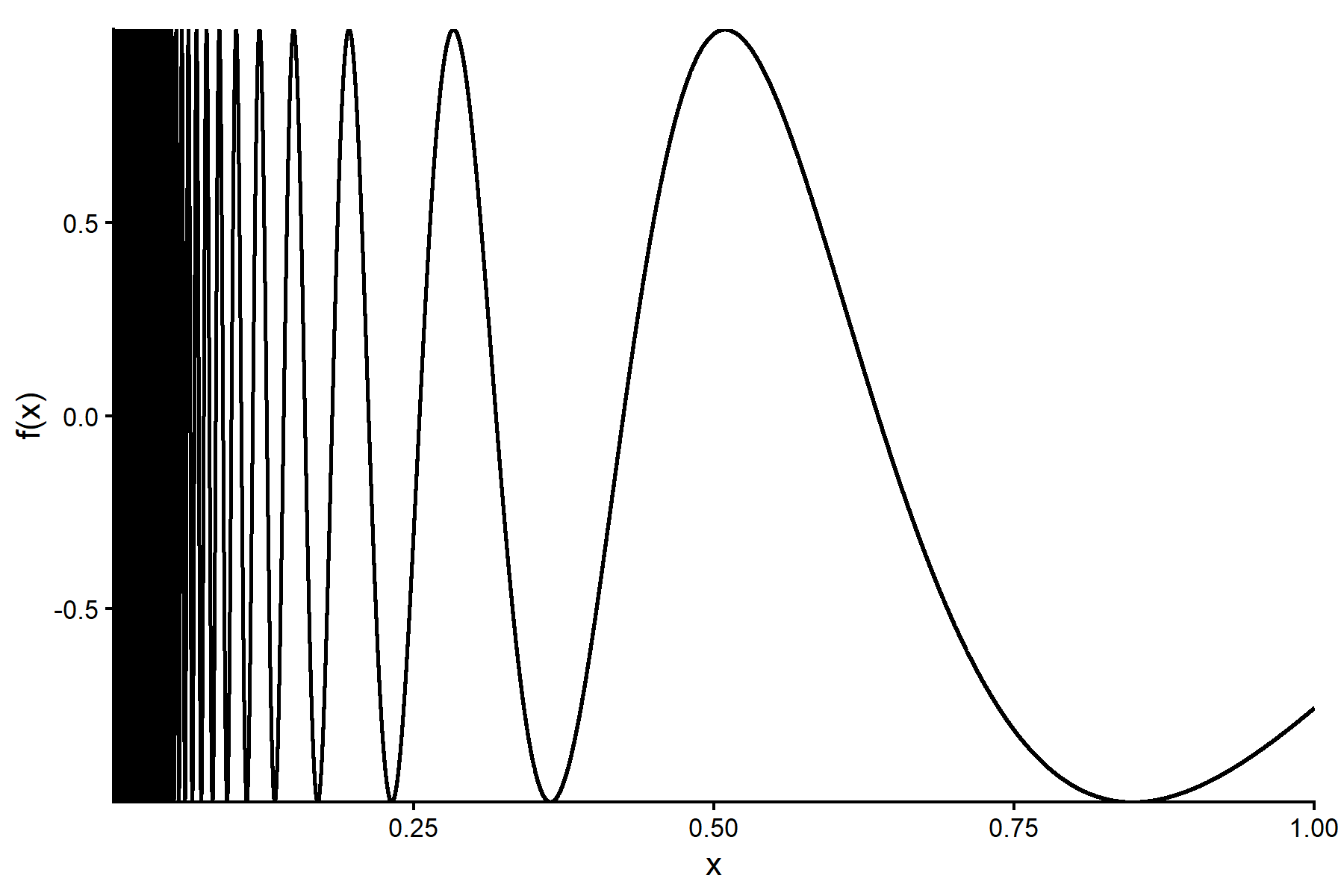}
         \caption{Signal $f_{\textsc{doppler}}$ with $\delta=1$.}
     \end{subfigure}
     \begin{subfigure}[T]{0.45\textwidth}
         \centering
         \includegraphics[width=\textwidth]{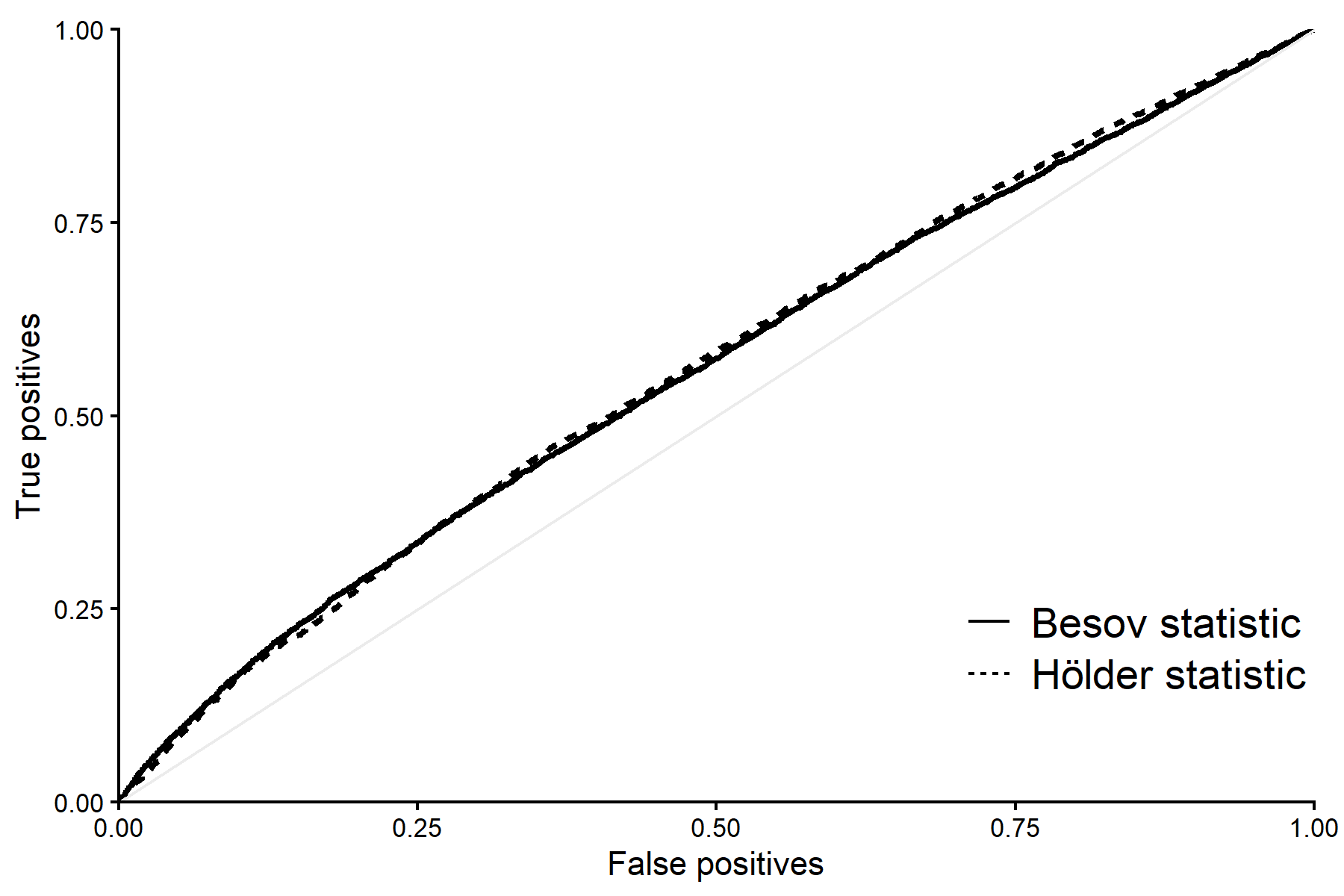}
         \caption{Power against $f_{\textsc{doppler}}$ with $\sqrt{n}\delta=2$.}
     \end{subfigure}
     \begin{subfigure}[T]{0.45\textwidth}
         \centering
         \includegraphics[width=\textwidth]{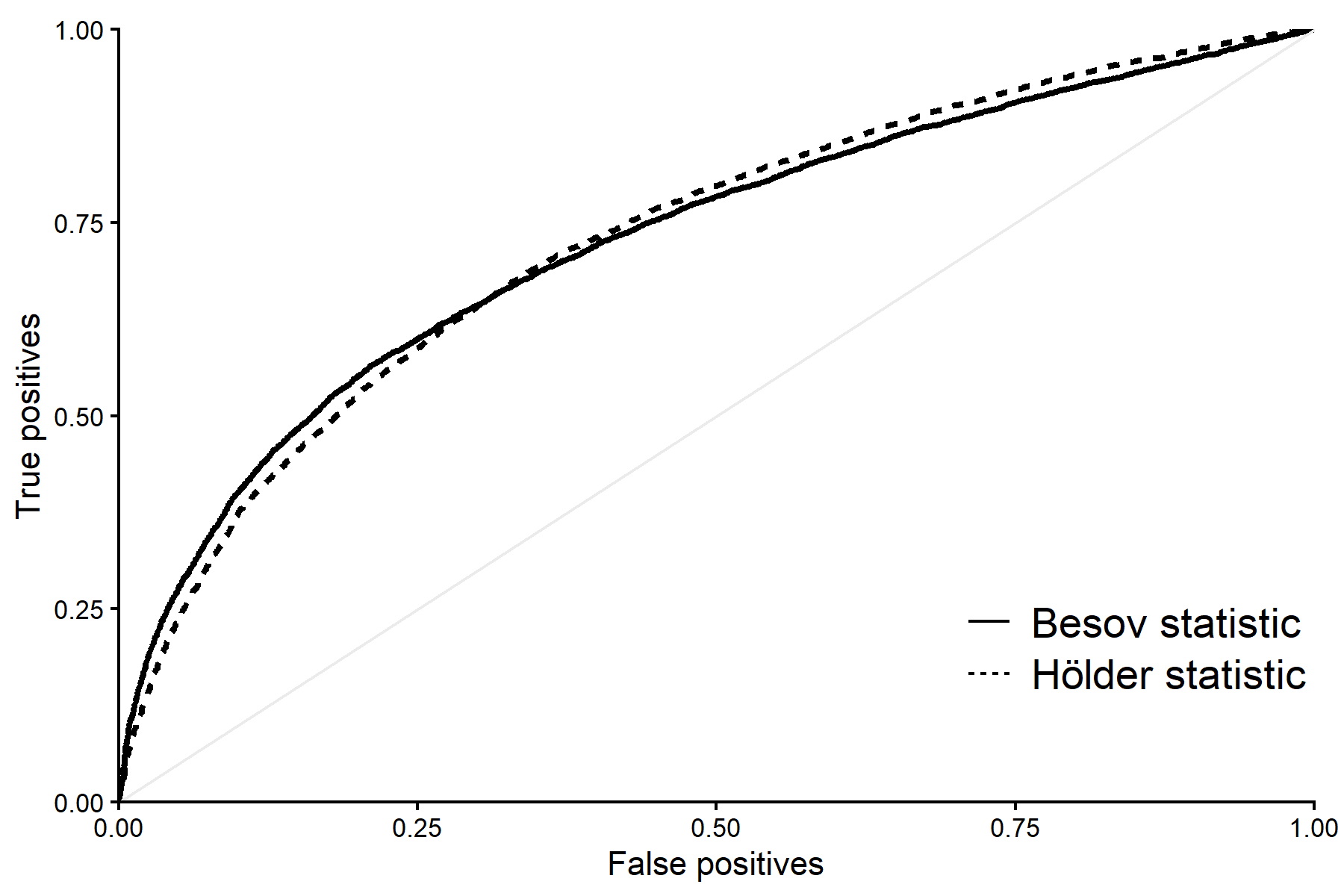}
         \caption{Power against $f_{\textsc{doppler}}$ with $\sqrt{n}\delta=4$.}
     \end{subfigure}
     \begin{subfigure}[T]{0.45\textwidth}
         \centering
         \includegraphics[width=\textwidth]{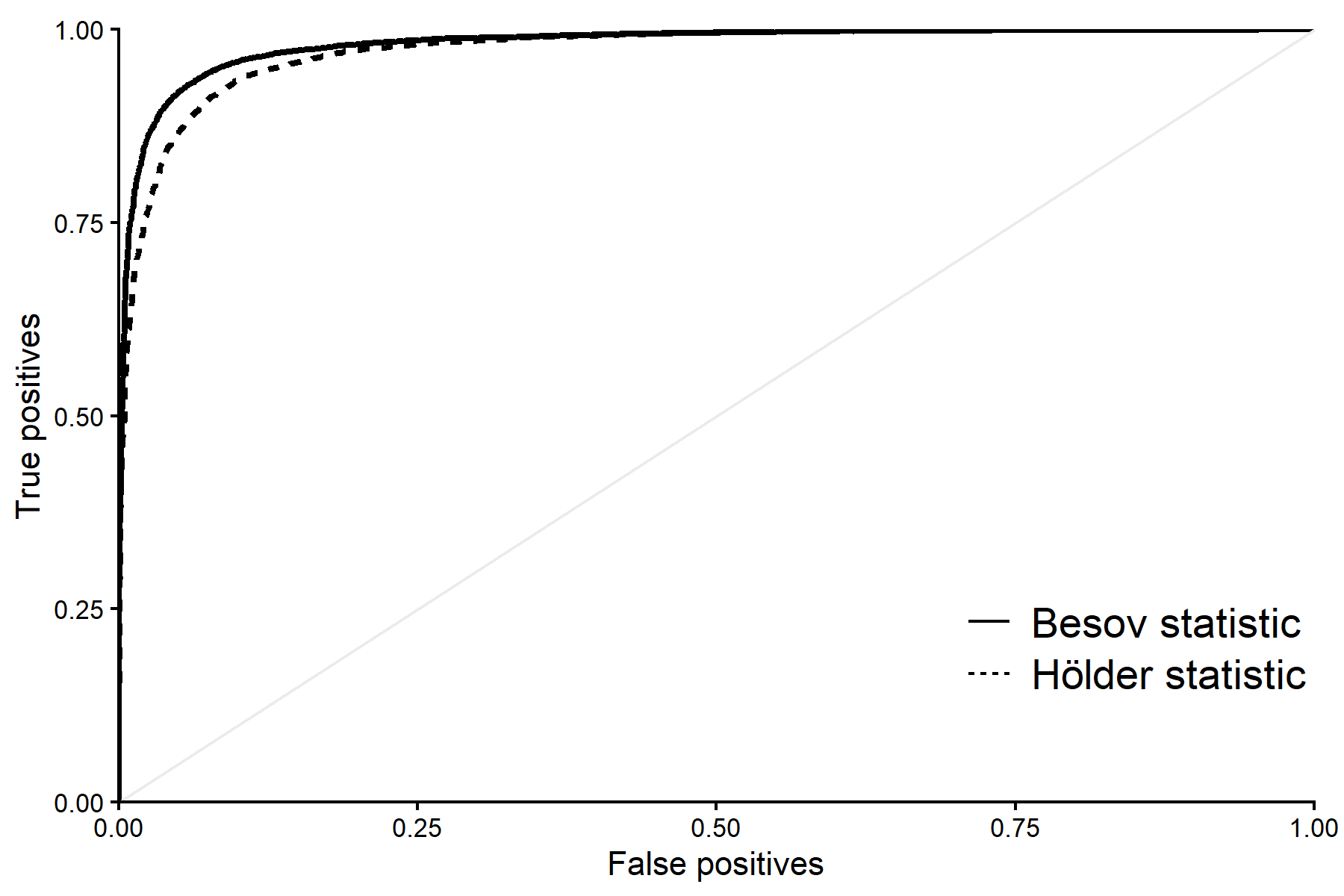}
         \caption{Power against $f_{\textsc{doppler}}$ with $\sqrt{n}\delta=8$.}
     \end{subfigure}
    \caption{Example test signal (a) and ROC curves (b-d) for the Besov-Orlicz statistic $\|Y_n\|_{\B}$ and the Hölder statistic $\|Y_n\|_{C^{\rho_{1/2}}}$ in the white noise model, with alternatives $f_{\textsc{doppler}}$. Reported quantities are based on $10^4$ Monte Carlo simulations.}
    \label{fig:doppler_power}
\end{figure}

\section{Discussion}

While the functional central limit theorem is formulated for iid random variables, an extension to independent but nonstationary data is possible because the tightness result, Theorem \ref{thm:modulus-fine}, does not require stationarity.
For example, one might study a locally stationary array $X_{t,n}\sim P_{t/n}$ of rows of independent random variables, where $P_u$ is a probability distribution on $\mathcal{X}$ for each $u\in[0,1]$ such that $\|X_{t,n}\|_{\Psi_2}$ admits an upper bound. 
Existing invariance principles for locally stationary data show that $S_n\wconv W$ in $C[0,1]$ for an inhomogeneous Brownian motion $W$, see e.g.\ \citet{mies_functional_2023}, and Theorem \ref{thm:modulus-fine} allows to conclude (thresholded) weak convergence in $B^{\rho}_{\Psi_2,\infty}$.

The independence of the random variables is only needed for condition (ii) of Theorem \ref{thm:modulus-fine}, and the partial sum process would still satisfy this condition if the $X_t$ were $m$-dependent instead of independent. 
However, a tightness result for more general temporal dependence structures would require an adaptation of the proof tailored to the specific ergodicity assumptions, such as mixing conditions \citep{bradley_basic_2005} or a decaying functional dependence measure in a nonlinear Bernoulli shift model \citep{Wu2005}.

Regarding the regularity results for the Brownian motion, it is not clear whether the rates established in Theorem \ref{thm:modulus-Brownian} are sharp, and whether the difference between the almost sure rate and the rate in probability is indeed a property of Brownian motion, or merely a limitation of our proof. 
One approach could be to derive a distributional limit of $h^{-1/4}[ \omega_{\Psi_2}(h,W)/\sqrt{h}-\sqrt{8/3} ]$ for $h\to 0$, as done by \cite{marcus_clt_2008} for the $L_p$ modulus, which provides a promising avenue for future investigations.

\appendix
\section{Proofs}\label{sec:proofs}

\subsection*{Proof of Proposition   \ref{prop:embedding}: Embeddings}
    \underline{Consider $\mathcal{X}=\R$ first.} 
    We start by showing that that the Besov-Orlicz space can be embedded in $C[0,1]$.
    Consider $\mathcal{X}=\R$, and let $r>0$ such that $\rho(h)\ll \rho_r(h):=h^{r}$ as $h\to 0$.
    Then $B^{\rho}_{\Psi_2,\infty} \subset B^{r}_{p,\infty}$ for any $p>1$, and the latter space embeds into $C[0,1]$ by standard Besov embeddings. 
    That is, for any $f:[0,1]\to \R$ such that $\|f\|_{B^{\rho}_{\Psi_2,\infty}}\leq 1$, there exists a $f^*\in C[0,1]$ with $f^*=f$ almost everywhere.
    Moreover, by the continuous embedding $B^r_{p,\infty} \hookrightarrow C^s$ for any $s<r-\frac{1}{p}$ \citep[Prop.~4.3.2]{Gine2016}, this $f^*$ is uniformly continuous with modulus of continuity $\omega(h)\leq Ch^s \|f\|_{B^{\rho}_{\Psi_2,\infty}}$, for some $C$ not depending on $f$.
    In the sequel, we identify $f$ with $f^*$.
    
    Now let $x^*\in [0,1]$ and $h\in[0,1]$ such that $|f(x^*+h)-f(x^*)|=\sup_{x\in[0,1]} \sup_{\tilde{h}\leq h} |f(x+\tilde{h})-f(x)| = \omega(f,h)$.
    Then $|f^*(x+h)-f(x)|\geq \omega(f,h)/2$ for all $|x-x^*|\leq \Delta:=(\omega(f,h)/(4C))^{1/s} $, and thus
    \begin{align*}
        &\omega_{\Psi_2}(f,h)=\|f(\cdot +h)-f(\cdot)\|_{\Psi_2} \\
        &\geq \inf\left\{K>0\,:\, \int_{x^* \pm \Delta} \Psi_2\left(\frac{|f(x+h)-f(x)|}{K}\right)\,dx \leq 1 \right\}\\
        &\geq \inf\left\{K>0\,:\, \Delta\cdot \Psi_2\left(\frac{\omega(f,h)}{2K}\right)\,dx \leq 1 \right\} \\
        & = \frac{\omega(f,h)}{2\Psi_2^{-1}(\frac{1}{\Delta})} 
        \quad = \frac{\omega(f,h)}{2 \sqrt{\log(1+\frac{1}{\Delta})}} \\
        &\geq \frac{\omega(f,h)}{C \sqrt{\log(e+\frac{1}{\omega(f,h)})}},
    \end{align*}
    for a sufficiently big universal $C$.
    Now note that the function $\varphi:[0,\omega(f,1)]\to \R, z\mapsto z/{\sqrt{\log(e+1/z)}}$ is strictly increasing, and thus 
    \begin{align*}
        \omega(f,h)\leq \varphi^{-1}(C\omega_{\Psi_2}(f,h)).
    \end{align*}
    Moreover, observe that
    \begin{align*}
        \varphi\left(z \sqrt{\log (e+1/z)}\right) = z \cdot \sqrt{\frac{\log(e+\frac{1}{z})}{\log\left(e+\frac{1}{z\sqrt{\log(e+\frac{1}{z})}}\right)}}
        \quad \geq z.
    \end{align*}
    This allows us to conclude that $\varphi^{-1}(y)\leq y \sqrt{\log(e+1/y)}$.
    Thus, for a potentially bigger $C$,
    \begin{align*}
        \omega(f,h)
        \leq \varphi^{-1}\left( C \omega_{\Psi_2}(f,h) \right)
        &\leq C \omega_{\Psi_2}(f,h) \sqrt{\log(e+1/\omega_{\Psi_2}(f,h))}\\
        &\leq C  \rho(h) \sqrt{\log(e+1/\rho(h))} \\
        &\leq C \rho(h) \sqrt{\log(e/h)} \quad = C\rho^*(h),
    \end{align*}
    because $\rho(h)\geq c h$.
    This proves the continuous embedding into $C^{\rho^*}$ for real-valued functions. 
    
    \underline{Now let $\mathcal{X}$ be a separable Banach space} with countable dense subset $\mathcal{D}$.
    For any $f\in B^{\rho}_{\Psi_2,\infty}$, and $y\in \mathcal{D}$, define the real-valued function $f_y(x)=\|x-y\|$ such that $|f_y|_{B^{\rho}_{\Psi_2,\infty}} \leq |f|_{B^{\rho}_{\Psi_2,\infty}}$ by the triangle inequality.
    As a consequence of the real-valued embedding established above, all $f_y$ admit a joint, uniform modulus of continuity $\omega(h)\leq C\rho^*(h)$. 
    Hence, there exists a $f^*_y\in C[0,1]$ and a set $\mathcal{A}_y$ with $|\mathcal{A}_y|=1$ and $f^*_y(x)=f_y(x)$ for all $x\in \mathcal{A}_y$. 
    Let $\mathcal{A}=\bigcap_{y\in \mathcal{D}}\mathcal{A}_y$ such that $|\mathcal{A}|=1$. 
    The restricted function $f:\mathcal{A}\to \mathcal{X}$ is uniformly continuous, because for any $y\in\mathcal{D}$,
    \begin{align*}
        \|f(x+h)-f(x)\| 
        &\leq \inf_{y\in \mathcal{D}} \|f(x+h)-y\| + \|f(x)-y\| \\
        &\leq \inf_{y\in \mathcal{D}} \left|\|f(x+h)-y\| - \|f(x)-y\|\right| + 2 \|f(x)-y\| \\
        &= \inf_{y\in \mathcal{D}} |f_y(x+h)-f_y(x) + 2\|f(x)-y\| \\
        &\leq \omega(h) + 2 \inf_{y\in \mathcal{D}} \|f(x)-y\| 
        \qquad = \omega(h).
    \end{align*}
    Hence, we can extend $f$ continuously from $\mathcal{A}$ to $[0,1]$ such that $\|f(x+h)-f(x)\|\leq C\rho^*(h)$. 
    This establishes the embedding into $C^{\rho^*}$ in the infinite dimensional case.

    \underline{Regarding the compactness of the embedding}, suppose that $K$ is bounded by $1$ without loss of generality. 
    Then its closure $\overline{K}$ is also bounded by one. 
    Since $K_t$ is compact and all $f\in K$ are uniformly equicontinuous as shown above, we find that $K$ is compact in $C[0,1]$ by Arzela-Ascoli.
    Thus, if $f_n\in K$ such that $f_n\to f$ in $B^{\rho'}_{\Psi_2,\infty}$, then also $f_n\to f$ uniformly, and hence $f_n(t)\to f(t)$. This yields that $\overline{K}_t=\overline{(K_t)}$, and the latter is also compact. 
    That is, the closure $\overline{K}$ satisfies the same qualitative properties as $K$, and thus to establish relative compactness, we may assume without loss of generality that $K$ is closed.

    Using again the compactness of $K$ in $C[0,1]$, let $f_n\in K$ be a sequence such that $f_n\to f$ in  uniformly, and thus also $\|f_n-f\|_{\Psi_2}\to 0$.
    We want to show that $f_n\to f$ in $B^{\rho'}_{\Psi_2,\infty}$. 
    Because $g\mapsto \omega_{\Psi_2}(g,h)$ is continuous on the domain $C[0,1]$ for each fixed $h>0$, we find that $\omega_{\Psi_2}(f,h) = \lim \omega_{\Psi_2}(f_n,h) \leq \rho(h)$.
    Hence, $\|f\|_{B^{\rho}_{\Psi_2,\infty}} \leq \sup_{g\in K}\|g\|_{B^{\rho}_{\Psi_2,\infty}} \leq 1$.
    Then, for any $\delta>0$,
    \begin{align*}
        &\limsup_{n\to\infty}\sup_{h\in(0,1)} \frac{\omega_{\Psi_2}(f_n-f, h)}{\rho'(h)} \\
        &\leq  \limsup_{n\to\infty} \sup_{h\in(\delta,1)} \frac{\omega_{\Psi_2}(f_n-f, h)}{\rho'(h)} \;+\; \limsup_{n\to\infty} \sup_{h\in(0,\delta]} \frac{\omega_{\Psi_2}(f_n-f, h)}{\rho'(h)} \\
        &\leq \limsup_{n\to\infty} \sup_{h\in(\delta,1)} \frac{\omega_{\Psi_2}(f_n-f, h)}{\rho'(h)} \;+\; \sup_{h\in(0,\delta]} \frac{2 \rho(h)}{\rho'(h)} \\
        &=\sup_{h\in(0,\delta]} \frac{2 \rho(h)}{\rho'(h)}. 
    \end{align*}
    The latter term tends to zero as $\delta\to 0$ as $\rho'\gg \rho$. 
    Since $\delta>0$ is arbitrary, we conclude that $|\sup_{h\in(0,1)} \frac{\omega(f_n-f, h)}{\rho(h)}| \to 0$ as $n\to\infty$, and thus $f_n\to f$ in $B^{\rho'}_{\Psi_2,\infty}$. 

    \underline{Returning to the special case $\mathcal{X}=\R$}, boundedness of $K$ in $B^{\rho}_{\Psi_2,\infty}$ implies that each $K_t$ is bounded and hence compact.
    Otherwise, if there exists a sequence $f_n\in K$ with $f_n(t)\to \infty$, then the equicontinuity of $K$ would imply that $\|f_n\|_{\Psi_2}\to \infty$ as well, which contradicts the boundedness of $f_n$ in $B^{\rho}_{\Psi,\infty}$.
    \qed

\subsection*{Proof of Theorem \ref{thm:subgauss-int-concentrate} and \ref{thm:modulus-fine}}
    The major part of the argument to establish Theorem \ref{thm:subgauss-int-concentrate} and \ref{thm:modulus-fine} is the same, hence we present it jointly.

    Denote $h_{i,j}=j2^{-i}$.
    Suppose we can establish that for some increasing function $H(h)$ with $H(0)>0$, and some $\zeta\in(0,1)$, 
    \begin{align}
        \omega_{\Psi_2}(h_{i,j},Z) \leq H(h_{i,j})\tau \sqrt{h_{i,j}}, \qquad i\geq |\log_2 \delta|,\quad  j=1,\ldots, 2^{\lfloor i \zeta \rfloor}.\label{eqn:chaining-raw-2}
    \end{align}
    Then for any $h\in(0,1)$, choose $m=m(h)\in \N$ such that $2^{-(m+1)(1-\zeta)} < h \leq 2^{-m(1-\zeta)}$.
    We can then find a $j(h)\leq 2^{\lfloor m\zeta\rfloor}$ such that $j(h)2^{-m}\leq h < [j(h)+1]2^{-m}$.
    Denote $[h] = j(h) 2^{-m(h)}$, so that $[h]$ is a left neighbor of $h$ on the dyadic grid $\{ k2^{-m(h)}\,:\, k=0,\ldots, 2^{m(h)} \}$.
    Thus, we may write $h$ as the series
    \begin{align*}
        h = [h] + \sum_{k=m}^\infty a_k(h) 2^{-k}, \qquad a_k(h)\in\{0,1\}.
    \end{align*}
    Hence, provided that \eqref{eqn:chaining-raw-2} holds, we find for any $h\leq \delta$ that
    \begin{align}
         \omega_{\Psi_2}(h, Z) 
        &\leq \omega_{\Psi_2}\left([h],Z\right) + \sum_{k=m(h)}^\infty \omega_{\Psi_2}\left(2^{-k},Z\right)\nonumber\\
        &\leq H([h])\tau \sqrt{[h]} + H(2^{-m})\sum_{k=m(h)}^\infty \tau  2^{-\frac{k}{2}} \nonumber\\
        &\leq H(h) \tau \sqrt{h} + \tau H(2^{-m(h)})2^{-\frac{m(h)}{2}}\frac{\sqrt{2}}{\sqrt{2}-1} \nonumber \\
        &\leq H(h) \tau \sqrt{h} + H(h^{\frac{1}{1-\zeta}}) \tau \sqrt{h}^{\frac{1}{1-\zeta}} \frac{\sqrt{2}}{\sqrt{2}-1}, \label{eqn:modulus-summed}
    \end{align}
    which implies 
    \begin{align*}
        \limsup_{h\to 0} \frac{\omega_{\Psi_2}(h,Z)}{\sqrt{h}} \leq \tau H(0).
    \end{align*}

    In view of the derivations above, it is sufficient to establish \eqref{eqn:chaining-raw-2}.
    That is, we need to control the probability
    \begin{align}
        &P\left( \omega_{\Psi_2}(h_{i,j}, Z) \geq H(h_{i,j}) \tau \sqrt{h_{i,j}} \;\text{ for some }i\geq |\log_2 \delta|,\; j\leq 2^{i\zeta} \right) \nonumber \\
        &\leq P\left( \omega_{\Psi_2}(h_{i,j}, Z) \geq H(h_{i,j}) \tau \sqrt{h_{i,j}} \;\text{ for some } |\log_2 \delta| \leq i \leq \left\lfloor\frac{|\log_2 (h_0/M)|}{1-\zeta}\right\rfloor, \; j\leq 2^{i\zeta} \right) \nonumber \\
        &\qquad + P\left(R>H(0) \tau \sqrt{M}\right) \nonumber\\
        &\leq  \sum_{i=\lceil |\log_2 \delta| \rceil }^{\left\lfloor\frac{|\log_2 (h_0/M)|}{1-\zeta}\right\rfloor} \sum_{j=1}^{2^{\lfloor i \zeta\rfloor}}  P\left( \omega_{\Psi_2}(h_{i,j}, Z) \geq H(h_{i,j}) \tau \sqrt{h_{i,j}} \right) \quad + P\left( R>H(0)\tau \sqrt{M}  \right),\label{eqn:tightB}
    \end{align}
    where we used condition (iii) in the last step, and $M\in \N$ will be specified later.
    To bound \eqref{eqn:tightB}, we proceed differently for condition (i), i.e.\ Theorem \ref{thm:subgauss-int-concentrate}, and condition (i)', i.e.\ Theorem \ref{thm:modulus-fine}.
    Specifically, we need to choose a different function $H$.

    \begin{proof}[Proof of Theorem \ref{thm:subgauss-int-concentrate}]
        Choose $H(h)=1+\epsilon$ constant, for any $\epsilon>0$.
        Via Lemma \ref{lem:modulus-concentrate}, we obtain 
        \begin{align}
         &\quad \sum_{i=\lceil |\log_2 \delta| \rceil }^{\left\lfloor\frac{|\log_2 (h_0/M)|}{1-\zeta}\right\rfloor} \sum_{j=1}^{2^{\lfloor i \zeta\rfloor}}  P\left( \omega_{\Psi_2}(h_{i,j}, Z) \geq H(h_{i,j}) \tau \sqrt{h_{i,j}} \right) \nonumber \\
         &\leq \sum_{i=\lceil |\log_2 \delta| \rceil }^{\left\lfloor\frac{|\log_2 (h_0/M)|}{1-\zeta}\right\rfloor} \sum_{j=1}^{2^{\lfloor i \zeta\rfloor}}  \frac{C}{\epsilon^2}(h_{i,j}\vee h_0)^{2\epsilon}\quad
         \leq \frac{C M^{2\epsilon}}{\epsilon^2}\sum_{i=\lceil |\log_2 \delta| \rceil }^{\infty} \sum_{j=1}^{2^{\lfloor i \zeta\rfloor}}  h_{i,j}^{2\epsilon} \nonumber\\
         &\leq \frac{C M^{2\epsilon}}{\epsilon^2}\sum_{i=\lceil |\log_2 \delta| \rceil }^{\infty} 2^{i(\zeta-2\epsilon)}.\nonumber
    \end{align}
    Upon choosing $\zeta<2\epsilon$, the latter series is summable.
    Now let $M=M(\delta)\to \infty$ sufficiently slow such that the latter term tends to zero as $\delta\to 0$, while at the same time $P\left( R>H(0) \tau \sqrt{M}  \right)\leq G(H(0) \tau \sqrt{M})\to 0$.
    We find that the probability in \eqref{eqn:tightB} vanishes as $\delta\to 0$, hence \eqref{eqn:chaining-raw-2} holds almost surely as $\delta\to 0$, and \eqref{eqn:modulus-summed} yields simultaneously for all $h\leq \delta$
    \begin{align*}
        \omega_{\Psi_2}(h,Z) 
        &\leq \tau \sqrt{h} \left(1+\epsilon\right)\left( \frac{2 \sqrt{2}}{\sqrt{2}-1}  h^{\frac{1}{2(1-\zeta)}-\frac{1}{2}}\right)
        &\leq \tau \sqrt{h} \left(1+\epsilon\right)\left( \frac{2 \sqrt{2}}{\sqrt{2}-1}  \delta^{\frac{1}{2(1-\zeta)}-\frac{1}{2}}\right).
    \end{align*}
    For $\delta$ small enough, the latter bound is smaller than $\tau \sqrt{h}(1+2\epsilon)$.
    Since $\epsilon>0$ is arbitrary, this establishes the claim of Theorem \ref{thm:subgauss-int-concentrate}.
    \end{proof}

    \begin{proof}[Proof of Theorem \ref{thm:modulus-fine}]
    Via Lemma \ref{lem:modulus-concentrate}, we obtain for any $p\in(1,\frac{4}{3})$, and for $H(h) = 1+h^s$,
    \begin{align*}
         &\quad \sum_{i=\lceil |\log_2 \delta| \rceil }^{\left\lfloor\frac{|\log_2 (h_0/M)|}{1-\zeta}\right\rfloor} \sum_{j=1}^{2^{\lfloor i \zeta\rfloor}}  P\left( \omega_{\Psi_2}(h_{i,j}, Z) \geq H(h_{i,j}) \tau \sqrt{h_{i,j}} \right) \\
         & \leq C(p,\kappa) \sum_{i=\lceil |\log_2 \delta| \rceil }^{\left\lfloor\frac{|\log_2 (h_0/M)|}{1-\zeta}\right\rfloor} \sum_{j=1}^{2^{\lfloor i \zeta\rfloor}} \left( \frac{H(h_{i,j})}{H(h_{i,j})-1} \right)^{p} (h_{i,j}\vee h_0)^{p-1} \\
         & \leq C M^{p-1} \sum_{i=\lceil |\log_2 \delta| \rceil }^{\left\lfloor\frac{|\log_2 (h_0/M)|}{1-\zeta}\right\rfloor} \sum_{j=1}^{2^{\lfloor i \zeta\rfloor}} \left( \frac{ H(h_{i,j})}{H(h_{i,j})-1} \right)^{p} h_{i,j}^{p-1} \\
         & \leq C H(\delta)^{p} M^{p-1} \sum_{i=\lceil |\log_2 \delta| \rceil }^{\infty}\sum_{j=1}^{2^{\lfloor i \zeta\rfloor}} \left( H(h_{i,j})-1 \right)^{-p} h_{i,j}^{p-1}\\
         & \leq C 2^p M^{p-1} \sum_{i=\lceil |\log_2 \delta| \rceil }^{\infty}  \sum_{j=1}^{2^{\lfloor i\zeta\rfloor}} h_{i,j}^{p(1-s)-1} 
         \quad =  2^p M^{p-1} \sum_{i=\lceil |\log_2 \delta| \rceil }^{\infty} 2^{-i(p(1-s)-1)} \sum_{j=1}^{2^{\lfloor i\zeta\rfloor}} j^{p(1-s)-1} \\
         & \leq C 2^p M^{p-1} \sum_{i=\lceil |\log_2 \delta| \rceil }^{\infty} 2^{i[1-p(1-s)(1-\zeta)]}.
    \end{align*}
    Upon choosing $p<\frac{4}{3}$ big enough, the latter series is summable for any $s<1-\frac{3}{4(1-\zeta)}$.
    Now let $M=M(\delta)\to \infty$ sufficiently slowly such that the latter term tends to zero as $\delta\to 0$, while at the same time $P\left( R>H(0) \tau \sqrt{M}  \right)\leq G(H(0) \tau \sqrt{M})\to 0$.
    We find that the probability in \eqref{eqn:tightB} vanishes as $\delta\to 0$, hence \eqref{eqn:chaining-raw-2} holds almost surely as $\delta\to 0$, and \eqref{eqn:modulus-summed} yields simultaneously for all $h\leq \delta$
    \begin{align*}
        \omega_{\Psi_2}(h,Z) 
        &\leq \tau \sqrt{h} \left(1+h^s + \frac{2 \sqrt{2}}{\sqrt{2}-1}  h^{\frac{1}{2(1-\zeta)}-\frac{1}{2}}\right).
    \end{align*}
    To balance the rates, we choose $\zeta = \frac{1}{6}$ and $s<\frac{1}{4}-\zeta = \frac{1}{10}$.
    This yields
    \begin{align*}
        \omega_{\Psi_2}(h,Z)
        \leq \tau \sqrt{h} \left(1 + \frac{4 \sqrt{2}}{\sqrt{2}-1}  h^{s}\right),
    \end{align*}
    which implies
    \begin{align*}
        \limsup_{h\to 0} h^{-s} \left[\frac{\omega_{\Psi_2}(h,Z)}{\sqrt{h}} - \tau\right] \leq \tau \frac{\sqrt{2}}{\sqrt{2}-1} \quad \text{almost surely}.
    \end{align*}
    Since $s<\frac{1}{10}$ is arbitrary, we conclude that the limit is indeed zero.
    \end{proof}

\subsection*{Proof of Lemma \ref{lem:modulus-concentrate}}

    Denote $\xi=\xi(r)=\sqrt{h}\tau(1 + r)$ for brevity, and assume $\tau=1$ without loss of generality.
    We observe that
    \begin{align*}
        &P\left(\omega_{\Psi_2}(h,Z)> \xi \right) \\
        &\leq P\left[\int_0^{1-h} \Psi_2\left( \tfrac{|Z(u+h)-Z(u)|}{\xi}\right)\, du > 1 \right]\\
        &= P\left[ \int_0^{1-h} \Psi_2\left( \tfrac{|Z_{u+h}-Z_u|}{\xi}\right) - \E\Psi_2\left( \tfrac{|Z_{u+h}-Z_u|}{\xi}\right)\, du > 1-\int_0^{1-h} \E\Psi_2\left( \tfrac{|Z_{u+h}-Z_u|}{\xi}\right) \, du \right] .
    \end{align*}
    
    Now observe that $x^2 \exp(x^2)\geq \exp(x^2)-1$ because $x^2\geq 1-\exp(-x^2)=x^2 \exp(-\tilde{x}^2)$ for some $|\tilde{x}|\leq |x|$, and thus
    \begin{align*}
        \frac{d}{dz} \E \exp\left(z^2\,|Z_{u+h}-Z_u|^2\right) 
        &= 2\E\left\{z|Z_{u+h}-Z_u|^2 \exp\left(z^2\,|Z_{u+h}-Z_u|^2\right)\right\} \\
        &= \frac{2}{z}\E\left\{z^2|Z_{u+h}-Z_u|^2 \exp\left(z^2\,|Z_{u+h}-Z_u|^2\right)\right\} \\
        &\geq \frac{2}{z} \E \left[\exp(z^2|Z_{u+h}-Z_u|^2)-1\right] 
        \;= \frac{2}{z} \E \Psi_2(z|Z_{u+h}-Z_u|).
    \end{align*}
    Hence, for some $\tilde{r}\in[0,r]$,
    \begin{align*}
        \E \exp\left( \tfrac{|Z_{u+h}-Z_u|}{\xi(r)} \right)^2 
        &= \E\exp\left( \tfrac{|Z_{u+h}-Z_u|}{\xi(0)} \right)^2 - r \frac{d}{dr} \E \exp\left( \tfrac{|Z_{u+h}-Z_u|}{\xi(r)} \right)^2 \big|_{r=\tilde{r}}  \\
        &= \E\exp\left( \tfrac{|Z_{u+h}-Z_u|}{\xi(0)} \right)^2 - r \frac{\xi'(\tilde{r})}{\xi(\tilde{r})^2}  \frac{d}{dz}\E\exp\left( z^2|Z_{u+h}-Z_u|^2\right)\Big|_{z=1/\xi(\tilde{r})} \\
        &\leq \E\exp\left( \tfrac{|Z_{u+h}-Z_u|}{\xi(0)} \right)^2 - r \frac{\xi'(\tilde{r})}{\xi(\tilde{r})^2}  2\xi(\tilde{r})\E\Psi_2\left(\frac{|Z_{u+h}-Z_u|}{\xi(\tilde{r})}\right) \\
        &\leq 2-\frac{2r}{1+r},
    \end{align*}
    and thus $\E\Psi_2\left( \tfrac{|Z_{u+h}-Z_u|}{\xi(r)}\right) \leq 1- \frac{2r}{1+r}$.
    We conclude that
    \begin{align*}
         1-\int_0^{1-h} \E\Psi_2\left( \tfrac{|Z_{u+h}-Z_u|}{\xi}\right) \, du
        &\geq h + (1-h) \frac{2r }{1+r}  \quad
        \geq \frac{r }{1+r}.
    \end{align*}
    We thus found
    \begin{align}
        P\left(\omega_{\Psi_2}(h,Z)\geq \xi(r) \right) 
        &\leq P\left[ \int_0^{1-h} \Psi_2\left( \tfrac{|Z_{u+h}-Z_u|}{\xi(r)}\right) - \E\Psi_2\left( \tfrac{|Z_{u+h}-Z_u|}{\xi(r)}\right)\, du > \frac{r }{1+r} \right]. \label{eqn:bound1}
    \end{align}

    For some $M$ such that $h>h_0/M$, i.e.\ $M>h_0/h$, we decompose
    \begin{align*}
        & \left| \int_0^{1-h} \Psi_2\left( \tfrac{|Z_{u+h}-Z_u|}{\xi}\right) - \E\Psi_2\left( \tfrac{|Z_{u+h}-Z_u|}{\xi}\right)  \, du \right| \\
        &\leq \sum_{i=0}^{M} \left| \sum_{j \text{ mod } (M+1) \,= i}\int_{(j-1)h}^{jh} \Psi_2\left( \tfrac{|Z_{u+h}-Z_u|}{\xi}\right) - \E(\ldots)  \, du  \right|. 
    \end{align*}
    The $i$-th term is the sum of $(hM)^{-1}$ independent random variables.
    Using the inequality $\|\sum_{i=1}^n X_i\|_{L_p}^p \leq 2 \sum_{i=1}^n\|X_i\|_{L_p}^p$ for any $p\in[1,2]$ and independent centered random variables $X_i$, see \cite[Thm.~2]{von_bahr_inequalities_1965}, we find
    \begin{align}
        & \left\| \int_0^{1-h} \Psi_2\left( \tfrac{|Z_{u+h}-Z_{u}|}{\xi}\right) - \E(\ldots)  \, du \right\|_{L_p}\nonumber \\
        &\leq \sum_{i=0}^{M} \left\| \sum_{j \text{ mod } (M+1) \,= i}\int_{(j-1)h}^{jh} \Psi_2\left( \tfrac{|Z_{u+h}-Z_u|}{\xi}\right) - \E(\ldots)  \, du  \right\|_{L_p} \nonumber \\
        &\leq 2^{\frac{1}{p}} M (hM)^{-\frac{1}{p}} \max_j  \int_{(j-1)h}^{jh}\left\| \Psi_2\left( \tfrac{|Z_{u+h}-Z_u|}{\xi}\right) - \E(\ldots)  \right\|_{L_p} \, du \nonumber \\
        &\leq 2 (hM)^{1-\frac{1}{p}} \max_{u\in[0,1-h]} \left\| \Psi_2\left( \tfrac{|Z_{u+h}-Z_u|}{\xi}\right) - \E(\ldots)  \right\|_{L_p} \nonumber \\
        &\leq 4 (hM)^{1-\frac{1}{p}} \max_{u\in[0,1-h]} \left[\E \Psi_2\left( \tfrac{|Z_{u+h}-Z_u|}{\xi}\right)^p\right]^{\frac{1}{p}} \nonumber \\
        &\leq 16 (h\vee h_0)^{1-\frac{1}{p}} \max_{u\in[0,1-h]} \left[\E \Psi_2\left( \tfrac{|Z_{u+h}-Z_u|}{\xi}\right)^p\right]^{\frac{1}{p}} \label{eqn:moment-tail} . 
    \end{align}
    In the last step, we specify $M=1$ if $h>h_0$, and otherwise $M=\lceil h_0/h\rceil\geq 2 h_0/h$. 
    In both cases, $hM\leq 2 (h\vee h_0)$.

    To bound \eqref{eqn:moment-tail}, we use the conditions (i) resp.\ (i)'.
    
    \underline{Under condition (i)}, Lemma \ref{lem:subgauss-shrink} below yields
    \begin{align*}
        \eqref{eqn:moment-tail}
        \leq 16 (h\vee h_0)^{1-\frac{1}{p}} \left( 2 + \frac{2}{\frac{(1+r)^2}{p}-1} \right)^{\frac{1}{p}}
    \end{align*}
    In combination with \eqref{eqn:bound1}, we obtain
    \begin{align*}
        P\left( \omega_{\Psi_2}(h,Z)>\xi(r) \right) 
        &\leq 16^2 (h\vee h_0)^{p-1} \left( 2 + \frac{2}{\frac{(1+r)^2}{p}-1} \right) \\
        &= C (h\vee h_0)^{p-1} \left( \frac{(1+r)^2}{(1+r)^2-p} \right).
    \end{align*}
    Letting $p=1+\eta r\leq 2$ for some $\eta\in(0,2)$ and $r\in[0,\frac{1}{2}]$, we find
    \begin{align*}
        P\left( \omega_{\Psi_2}(h,Z)>\xi(r) \right) 
        &= C (h\vee h_0)^{\eta r} \left( \frac{1}{r^2+(2-\eta)r} \right), \quad r\in[0,\tfrac{1}{2}].
    \end{align*}
    
    \underline{Under condition (i)'}, Lemma \ref{lem:condition-sharper} below implies that
    \begin{align*}
        \eqref{eqn:moment-tail} \quad
        \leq \quad  16 (h\vee h_0)^{1-\frac{1}{p}}\left[ 1-\frac{3}{4} \frac{p}{(1+r)^2} \right]^{-\frac{1}{2p}},
    \end{align*}
    which is finite for $p< \frac{4}{3}$
    In combination with \eqref{eqn:bound1}, we obtain
    \begin{align*}
        P\left( \omega_{\Psi_2}(h,Z)>\xi(r) \right) 
        &\leq 4^p \kappa^p \left(\frac{r}{1+r}\right)^{-p} (h\vee h_0)^{p-1} \left[ 1-\frac{3}{4} \frac{p}{(1+r)^2} \right]^{-\frac{1}{2}} \\
        &\leq 16^p (1+\kappa)^p \left(\frac{r}{1+r}\right)^{-p} (h\vee h_0)^{p-1} \left[ 1-\frac{3}{4} p \right]^{-\frac{1}{2}}.\qed
    \end{align*}

    \begin{lemma}\label{lem:condition-sharper}
    Condition (i)' implies 
    \begin{align}
        \E \exp\left( \|Z_u-Z_v\|^2/K^2 \right) \leq  \kappa \left[ 1-\frac{3}{4}\frac{\tau^2|u-v|}{K^2} \right]^{-\frac{1}{2}}. \label{eqn:exp-moment}
    \end{align}
    \end{lemma}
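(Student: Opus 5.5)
The plan is to convert the exponential-moment bound (i)' into a bound on $\E\exp(X^2/K^2)$ for $X=\|Z_u-Z_v\|$ by Gaussian linearization (the Hubbard--Stratonovich trick). Let $\sigma^2 = \frac{3}{16}\tau^2|u-v|$, so that (i)' reads $\E\exp(\lambda X)\le \kappa \exp(\sigma^2\lambda^2)$ for all $\lambda\in\R$. Let $G\sim\mathcal N(0,1)$ be independent of $Z$. For every real $x$,
\begin{align*}
\exp\left(\frac{x^2}{K^2}\right) = \E_G \exp\left(\frac{\sqrt{2}\,x\,G}{K}\right).
\end{align*}
This holds because $\E e^{aG}=e^{a^2/2}$ with $a=\sqrt2 x/K$.

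Substitute $x=X$ and apply Tonelli's theorem; everything is nonnegative, so interchanging the expectations is legitimate. This gives
\begin{align*}
\E\exp\left(\frac{X^2}{K^2}\right) = \E_G\,\E_Z \exp\left(\frac{\sqrt2 G}{K} X\right) \le \kappa\, \E_G \exp\left(\frac{2\sigma^2 G^2}{K^2}\right),
\end{align*}
where (i)' was applied conditionally on $G$ with $\lambda=\sqrt2 G/K$. Here it matters that (i)' holds for all real $\lambda$, including negative ones, since $G$ takes both signs.

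It remains to compute the Gaussian integral. For $2b<1$ we have $\E e^{bG^2}=(1-2b)^{-1/2}$. Take $b=2\sigma^2/K^2$, so that
\begin{align*}
2b = \frac{4\sigma^2}{K^2} = \frac{3}{4}\frac{\tau^2|u-v|}{K^2}.
\end{align*}
This gives exactly the bound \eqref{eqn:exp-moment}, valid whenever the bracket is positive. When the bracket is nonpositive, the right-hand side is interpreted as $+\infty$ and the claim is trivial.

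There is essentially no obstacle here. The only points needing care are the constant bookkeeping ($\frac{3}{16}$ leading to $\frac34$) and the justification of Fubini and Tonelli. If one only had (i)' for $\lambda\ge0$, one would instead use $e^{\lambda X}\le e^{\lambda X}+e^{-\lambda X}$ and symmetry of $G$, at the cost of replacing $\kappa$ by $2\kappa$. Since (i)' is stated for all $\lambda\in\R$, this modification is not needed.
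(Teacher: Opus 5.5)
Your proof is correct and matches the paper's argument: the paper also writes $\exp(\|Z\|^2/K^2)$ as the Gaussian average $\E_\eta \exp(\sqrt{2}\,\eta\|Z\|/K)$, applies (i)' with $\lambda=\sqrt{2}\eta/K$, and evaluates $\E\exp(b\eta^2)=(1-2b)^{-1/2}$. Your constants ($\sigma^2=\tfrac{3}{16}\tau^2|u-v|$, which is the paper's $C^2/2$) agree with the paper's, and your explicit remarks on Tonelli, on needing (i)' for negative $\lambda$, and on the case where the bracket is nonpositive are careful additions.
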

    \begin{proof}[Proof of Lemma \ref{lem:condition-sharper}]
        We generalize the argument presented by \cite{leskela_sharp_2026} to the extended tail condition (i)' for Banach spaces.
        Let $Z$ be a random variable taking values in $(\mathcal{X},\|\cdot\|)$ such that $\E \exp(\lambda \|Z\|)\leq \exp(C^2\lambda^2/2)$ for all $\lambda\in\R$.
        For any $K>0$, and a standard normal random variable $\eta$, independent of $Z$,
        \begin{align*}
            \exp\left( \|Z\|^2/K^2 \right) 
            = \int \exp(\eta\cdot \sqrt{2} \|Z\|/K )\, dP(\eta),
        \end{align*}
        and thus
        \begin{align*}
            \E \exp\left( \|Z\|^2/K^2 \right) 
            &= \E \exp(\eta\cdot \sqrt{2} \|Z\|/K ) \\
            &\leq \kappa \E \exp\left( C^2 \eta^2 / K^2 \right) \\
            &= \kappa \int_{-\infty}^\infty \frac{1}{\sqrt{2\pi}} e^{-x^2/2} \exp\left( C^2 x^2/K^2 \right)\, dx 
            \qquad = \frac{\kappa}{\sqrt{1-2C^2/K^2}},
        \end{align*}
        for any $K>\sqrt{2}C$.
        With $C^2=\frac{3}{8}\tau^2|u-v|$, we obtain \eqref{eqn:exp-moment}.
        In particular, for $K=C \sqrt{8/3}$, we find that (i)' implies (ii).
    \end{proof}

\begin{lemma}\label{lem:subgauss-shrink}
    For a sub-Gaussian random variable $X$ taking values in a normed space $(\mathcal{X},\|\cdot\|)$, and $1\leq p < K^2$, 
    \begin{align*}
        \E \Psi_2\left(\frac{\|X\|}{K \|X\|_{\Psi_2}}\right)^p &\leq \int_0^\infty \frac{2}{(1+v^{1/p})^{K^2}}\, dv \\
        &\leq \begin{cases}
            2 + \frac{2}{\frac{K^2}{p}-1}, & p>1, \\
            \frac{2}{K^2-1}, & p=1.
        \end{cases} 
    \end{align*}
\end{lemma}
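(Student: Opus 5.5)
By homogeneity, the plan is to normalize to $\|X\|_{\Psi_2}=1$; the case $\|X\|_{\Psi_2}=0$ is trivial. Put $Y=\|X\|$, so that $\E e^{Y^2}\le 2$. Markov's inequality then gives the tail bound $P(Y>t)\le 2e^{-t^2}$. We must bound $\E \Psi_2(Y/K)^p$, where $\Psi_2(Y/K)=e^{Y^2/K^2}-1$. We write this expectation through the layer-cake formula,
\begin{align*}
\E \Psi_2(Y/K)^p=\int_0^\infty P\big(\Psi_2(Y/K)^p>v\big)\,dv=\int_0^\infty P\big(Y^2>K^2\log(1+v^{1/p})\big)\,dv .
\end{align*}
Applying the tail bound at $t^2=K^2\log(1+v^{1/p})$ gives $P(\cdot)\le 2\exp(-K^2\log(1+v^{1/p}))=2(1+v^{1/p})^{-K^2}$. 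This is exactly the first inequality.

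For the second inequality we evaluate or bound the integral $I=\int_0^\infty 2(1+v^{1/p})^{-K^2}dv$.

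\emph{Case $p=1$.} The integral is explicit: $\int_0^\infty 2(1+v)^{-K^2}dv=2/(K^2-1)$, which is finite since $K^2>1$.

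\emph{Case $p>1$.} We split the integral at $v=1$. On $[0,1]$ the integrand is at most $2$, so this piece contributes at most $2$. On $[1,\infty)$ we use $1+v^{1/p}\ge v^{1/p}$, so the integrand is at most $2v^{-K^2/p}$. Because $K^2/p>1$, this is integrable, with $\int_1^\infty 2v^{-K^2/p}dv=2/(K^2/p-1)$. Adding the two pieces gives $2+2/(K^2/p-1)$.

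The only step needing care is the layer-cake rewriting, namely inverting $v<(e^{y^2/K^2}-1)^p$ as $y^2>K^2\log(1+v^{1/p})$. This inversion is valid because the map is monotone in $y\ge0$. Everything else is elementary calculus, so I expect no real obstacle.
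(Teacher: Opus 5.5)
Your proof is correct and is essentially the paper's argument: normalize to $\|X\|_{\Psi_2}=1$, write the $p$-th moment via the layer-cake formula, bound the tail by Markov's inequality applied to $e^{\|X\|^2}$ to get $2(1+v^{1/p})^{-K^2}$, then split at $v=1$ for $p>1$ and integrate explicitly for $p=1$. One point both you and the paper leave implicit is that the infimum defining $\|X\|_{\Psi_2}$ is attained, so that $\E e^{\|X\|^2}\le 2$ holds after normalization; this follows from monotone convergence.
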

\begin{proof}[Proof of Lemma \ref{lem:subgauss-shrink}]
    By rescaling, let $\|X\|_{\Psi_2}=1$ without loss of generality.
    Then $\E \Psi_2(\|X\|)\leq 1$, i.e.\ $\E \exp(\|X\|^2)\leq 2$, and thus 
    \begin{align*}
        \E \Psi_2(\|X\|/K)^p
        &= \int_0^\infty P\left( \exp(\|X\|^2/K^2) > 1+v^{1/p} \right) \, dv \\
        &= \int_0^\infty P\left( \exp(\|X\|^2)>(1+v^{1/p})^{K^2} \right)\, dv \\
        &\leq \int_0^\infty \frac{2}{(1+v^{1/p})^{K^2}}\, dv \\
        &\leq 2 + 2 \int_1^\infty v^{-K^2/p}\, dv = 2 + \frac{2}{\frac{K^2}{p}-1}.
    \end{align*}
    Note that this bound is very loose.
    For $p=1$, the integral can be computed explicitly by shifting $v\mapsto v+1$.
\end{proof}

\subsection*{Proof of Theorem \ref{thm:modulus-Brownian}: Regularity of Brownian motion}

First, note that $\|W(1)\|_{\Psi_2}<\infty$ by virtue of Fernique's Theorem, which states that $\E\exp(\eta |W(1)|^2)<\infty$ for some $\eta>0$; see e.g.\ \cite[Thm.~2.7]{da_prato_stochastic_2014}.
    
    The upper bound $\limsup_{h\to 0} \omega_{\Psi_2}(h,W)/\sqrt{h}\leq \|W(1)\|_{\Psi_2}$ is a consequence of Theorem \ref{thm:subgauss-int-concentrate} with $h_0=0$ and $\tau= \|W(1)\|_{\Psi_2}$, and the Brownian self-similarity $W(u+h)-W(u) \deq \sqrt{h}W(1)$.
    
    For the respective lower bound, choose any $K<\|W(1)\|_{\Psi_2}$ and write
    \begin{align*}
        \int_0^{1-\frac{1}{n}} \Psi_2\left(\frac{|W(u+\frac{1}{n})-W(u)|}{K \sqrt{\frac{1}{n}}}\right) 
        &= \sum_{i=0}^{n-1} \int_{\frac{i}{n}}^{\frac{i+1}{n}} \Psi_2\left(\frac{|W(u+\frac{1}{n})-W(u)|}{K \sqrt{\frac{1}{n}}}\right) \, du \\
        &\deq \frac{1}{n}\sum_{i=0}^{n-1} \int_{0}^{1} \Psi_2\left(\frac{|W(i+v+1)-W(i+v)|}{K}\right) \, dv,
    \end{align*}
    using the self-similarity of the Brownian motion. 
    Since the processes $[W(i+v+1)-W(i+v)]_{v\in[0,1]}$ and $[W(j+v+1)-W(j+v)]_{v\in[0,1]}$ are standard Brownian motions, and independent for $|i-j|\geq 2$, the standard law of large numbers yields
    \begin{align*}
        &\frac{1}{n}\sum_{i=0}^{n-1} \int_{0}^{1} \Psi_2\left(\frac{|W(i+v+1)-W(i+v)|}{K}\right) \, dv \\ 
        &\asconv \E \left[ \int_{0}^{1} \Psi_2\left(\frac{|W(v+1)-W(v)|}{K}\right) \, dv\right] 
        \qquad =\E \Psi_2(|W(1)|/K).
    \end{align*}
    If $K<\|W(1)\|_{\Psi_2}$, then this limit is larger than $1$. 
    Hence, for any $K<\|W(1)\|_{\Psi_2}$, almost surely there is a (random) $N$ such that $\omega_{\Psi_2}(\frac{1}{n},W) \sqrt{n}\geq K$ for all $n\geq N$. 
    Hence, almost surely, $\limsup_{h\to 0} \omega_{\Psi_2}(h,W)/\sqrt{h}\geq \|W(1)\|_{\Psi_2}$.
    \qed

\subsection*{Proof of Theorem \ref{thm:FCLT} and Theorem \ref{thm:convergence}: Functional weak convergence}

\begin{lemma}\label{lem:subgauss-Banach}
    Let $X_1,\ldots, X_n$ be independent centered random variables taking values in a type-2 Banach space $(\mathcal{X},\|\cdot\|)$.
    Then there exists a constant $K=K(\mathcal{X})$ such that
    \begin{align*}
        \left\| \sum_{i=1}^n X_i \right\|_{\Psi_2} \;\leq\; K \sqrt{ \sum_{i=1}^n \|X_i\|^2_{\Psi_2} }.
    \end{align*}
\end{lemma}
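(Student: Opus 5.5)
The plan is to split $\|\sum_i X_i\|$ into its mean and its deviation from the mean, bound the mean by type 2, and bound the deviation by a Gaussian concentration inequality.

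\emph{Step 1: symmetrize.} Let $\epsilon_i$ be iid Rademacher signs independent of the $X_i$. The standard symmetrization inequality for Orlicz norms of sums of independent centered vectors gives
\begin{align*}
\Big\|\sum_i X_i\Big\|_{\Psi_2}\le 2\,\Big\|\sum_i \epsilon_i X_i\Big\|_{\Psi_2}.
\end{align*}
This uses Jensen with an independent copy, $\Psi_2$ being convex and increasing. It therefore suffices to treat the symmetric sum $S=\sum_i \epsilon_i X_i$.

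\emph{Step 2: bound the mean.} By the type 2 property, applied conditionally on the $X_i$, and then Jensen,
\begin{align*}
\E\|S\|\le \big(\E\|S\|^2\big)^{1/2}\le \sqrt{K_{\mathcal{X}}\textstyle\sum_i \E\|X_i\|^2}.
\end{align*}
Since $\E\|X_i\|^2\le \|X_i\|_{\Psi_2}^2$ (use $x^2\le e^{x^2}-1$), this is at most $\sqrt{K_{\mathcal{X}}}\,\sigma$, where $\sigma^2:=\sum_i\|X_i\|_{\Psi_2}^2$. Because a constant $c$ has $\|c\|_{\Psi_2}=c/\sqrt{\log 2}$, the mean contributes at most $C\sigma$ to $\|S\|_{\Psi_2}$.

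\emph{Step 3: bound the deviation $\|S\|-\E\|S\|$ (main obstacle).} I need sub-Gaussian tails of order $\sigma$ for this deviation, without boundedness of the $X_i$ and without assuming anything beyond type 2.

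My first choice is to replace the Rademacher signs by Gaussians $g_i$. By the contraction principle, $\|\sum\epsilon_iX_i\|_{\Psi_2}\le C\|\sum g_iX_i\|_{\Psi_2}$, since convex functionals of Rademacher sums are dominated by those of Gaussian sums up to the factor $\sqrt{\pi/2}$.

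Conditionally on the $X_i$, the map $g\mapsto\|\sum g_iX_i\|$ is Lipschitz with constant
\begin{align*}
L=\sup_{\|x^*\|\le1}\Big(\sum_i x^*(X_i)^2\Big)^{1/2}\le\Big(\sum_i\|X_i\|^2\Big)^{1/2}.
\end{align*}
Gaussian concentration then gives $P\big(|\,\|\sum g_iX_i\|-\E_g\|\sum g_iX_i\|\,|>t\mid X\big)\le 2e^{-t^2/(2L^2)}$.

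It remains to control $\sum_i\|X_i\|^2$ in $\Psi_1$. Since $\|\,\|X_i\|^2\|_{\Psi_1}=\|X_i\|_{\Psi_2}^2$, the triangle inequality in $\Psi_1$ gives $\|\sum_i\|X_i\|^2\|_{\Psi_1}\le\sigma^2$, hence $\|L\|_{\Psi_2}\le\sigma$.

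To combine, I would integrate the conditional concentration against the law of $L$. Conditional sub-Gaussianity with a random scale $L$ yields $\|\,\|\sum g_iX_i\|-\E_g\|\sum g_iX_i\|\,\|_{\Psi_2}\lesssim\|L\|_{\Psi_\infty}$ in general, so this step is where care is needed. Concretely, I would bound $\E\exp(\lambda^2 Y^2)$ for the deviation $Y$ via $\E\,(1-2\lambda^2L^2)_+^{-1/2}$-type expressions, and use $\|L\|_{\Psi_2}\le\sigma$ to keep these finite for $\lambda\lesssim1/\sigma$.

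If that integration loses too much, the fallback is a moment-based route. I would bound $\|\sum X_i\|_{L_q}\le C(\sqrt q\,\sigma)$ for all $q\ge1$ via the Hoffmann-J{\o}rgensen/Talagrand inequality in type 2 spaces, whose bound has the form
\begin{align*}
\|S\|_{L_q}\lesssim \E\|S\|+\sqrt q\,\sup_{x^*}\Big(\sum\|x^*X_i\|_{L_2}^2\Big)^{1/2}+\big\|\max_i\|X_i\|\big\|_{L_q}.
\end{align*}
Here $\|\max_i\|X_i\|\|_{L_q}\lesssim\sqrt q\,\sigma$ follows from $\|\max_i\|X_i\|\|_{\Psi_2}\le(\sum\|X_i\|_{\Psi_2}^2)^{1/2}$, which holds via $\max\le(\sum\|X_i\|^2)^{1/2}$ and the $\Psi_1$ bound above. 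Finally, the characterization $\|Y\|_{\Psi_2}\asymp\sup_q\|Y\|_{L_q}/\sqrt q$ converts these moment bounds back into the claimed $\Psi_2$ bound with $K=K(\mathcal{X})$.
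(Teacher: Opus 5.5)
\textbf{There is a genuine gap in Step 3.} Your Steps 1 and 2 are correct and essentially match the paper's handling of the mean term: symmetrization, then type 2 together with $\E\|X_i\|^2\le\|X_i\|_{\Psi_2}^2$. But neither of your two routes for the fluctuation yields sub-Gaussian control.

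\emph{The Gaussian route.} Replacing $\epsilon_i$ by $g_i$ already destroys what you want to prove. For $n=1$ and $X_1\sim\mathcal{N}(0,1)$, one has $\|\epsilon_1X_1\|_{\Psi_2}<\infty$ but $\|g_1X_1\|_{\Psi_2}=\infty$, because a product of two independent Gaussians has only exponential tails. So the comparison $\|\sum_i\epsilon_iX_i\|_{\Psi_2}\le C\|\sum_i g_iX_i\|_{\Psi_2}$ is true but vacuous. The same obstruction appears in your integration step. The expressions $\E(1-2\lambda^2L^2)_+^{-1/2}$ are infinite for every $\lambda>0$ as soon as $L$ is unbounded. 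Mixing Gaussian tails over a sub-Gaussian random scale gives only $\Psi_1$ tails.

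\emph{The fallback route.} It rests on a misquoted inequality. With a constant in front of $\|\max_i\|X_i\|\|_{L_q}$, your displayed moment bound is false. Take real symmetric $X_i\in\{-1,0,1\}$ with $P(X_i\neq0)=1/n$. Then $\E|S|\asymp1$, the weak variance is $1$ and $\|\max_i|X_i|\|_{L_q}\le1$. Yet $S$ is asymptotically a difference of two independent Poisson$(1/2)$ variables, so $\|S\|_{L_q}\asymp q/\log q$. The valid Hoffmann-J{\o}rgensen/Talagrand moment inequalities carry a factor of order $q/\log q$ (or $q$) in front of the max term. Combined with $\|\max_i\|X_i\|\|_{L_q}\lesssim\sqrt q\,\sigma$, this gives growth $q^{3/2}$ rather than $\sqrt q$, so no $\Psi_2$ bound follows.

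\emph{What is missing.} You need an estimate at the $\Psi_2$ level in which the big-jump term is $(\sum_i\|X_i\|_{\Psi_2}^2)^{1/2}$ instead of a norm of $\max_i\|X_i\|$. This is exactly what the paper uses. Ledoux--Talagrand, Theorem 6.21 (case $\alpha=2$, conjugate exponent $2$), gives $\|\sum_iX_i\|_{\Psi_2}\le K(\E\|\sum_iX_i\|+(\sum_i\|X_i\|_{\Psi_2}^2)^{1/2})$. After that, your Steps 1--2 finish the proof.

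An elementary substitute is the martingale decomposition $\|S\|-\E\|S\|=\sum_i d_i$, with $d_i=\E[\|S\|\mid\mathcal{F}_i]-\E[\|S\|\mid\mathcal{F}_{i-1}]$ and $\mathcal{F}_i=\sigma(X_1,\dots,X_i)$. Since $|d_i|\le\|X_i\|+\E\|X_i\|$ and $X_i$ is independent of $\mathcal{F}_{i-1}$, each $d_i$ is conditionally centered with conditional $\Psi_2$ norm $\lesssim\|X_i\|_{\Psi_2}$. The Azuma--Hoeffding argument then gives $\|\,\|S\|-\E\|S\|\,\|_{\Psi_2}\lesssim\sigma$.
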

\begin{proof}[Proof of Lemma \ref{lem:subgauss-Banach}]
    For iid Rademacher random variables $\epsilon_i$, i.e.\ $P(\epsilon_i=1)=P(\epsilon_i=-1)=1/2$, we may use the symmetrization bound \cite[Lemma 6.3]{ledoux_probability_2011} and a sub-Gaussian concentration inequality \cite[Theorem 6.21]{ledoux_probability_2011} to obtain
    \begin{align*}
        \left\|\sum_{t=1}^n X_t\right\|_{\Psi_2} 
        &\leq K\left\|\sum_{t=1}^n X_t\right\|_{L_1} + K\sqrt{\sum_{t=1}^n \|X_t\|_{\Psi_2}^2}\\
        &\leq 2K\left\|\sum_{t=1}^n \epsilon_t X_t\right\|_{L_1} + K\sqrt{\sum_{t=1}^n \|X_t\|_{\Psi_2}^2}.
    \end{align*}
    Since $\mathcal{X}$ is of type 2, 
    \begin{align*}
        \left\|\sum_{t=1}^n \epsilon_t X_t\right\|_{L_1} 
        \leq \left\|\sum_{t=1}^n \epsilon_t X_t\right\|_{L_2} 
        &\leq K(\mathcal{X})\sqrt{\sum_{t=1}^n\left\| \epsilon_t X_t\right\|_{L_2}^2} 
        \leq K_{\mathcal{X}}\sqrt{\sum_{t=1}^n\left\| \epsilon_t X_t\right\|_{\Psi_2}^2}.
    \end{align*}
\end{proof}

\begin{lemma}\label{lem:Sn-conditions-ii-iii}
    Let $X_i$ be iid centered, sub-Gaussian random variables such that $\|X_i\|_{\Psi_2}<\infty$.
    Then the interpolated partial sum process $S_n$ satisfies conditions (ii) and (iii) of Theorem \ref{thm:subgauss-int-concentrate} with $h_0=\frac{2}{n}$, and $G(r)=2/\left(\frac{r^2}{32\tau^2}-1\right)$. 
\end{lemma}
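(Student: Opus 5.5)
Condition (ii) is essentially immediate from the construction of $S_n$ and the independence of the $X_t$; condition (iii) follows by an explicit computation that exploits the piecewise linearity of $S_n$ on the grid $\{k/n\}$. I would take $\tau$ as the constant in condition (i) for $S_n$, namely $\tau = K\|X_1\|_{\Psi_2}$ from Lemma \ref{lem:subgauss-Banach}, possibly enlarged by a universal factor.

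For (ii), take $u_1<v_1$ and $u_2<v_2$ with $v_1 + 2/n \le u_2$. The increment $S_n(v_1)-S_n(u_1)$ is a linear combination of $X_t$ with indices $t \le \lceil v_1 n\rceil$. The increment $S_n(v_2)-S_n(u_2)$ only involves $X_t$ with $t \ge \lfloor u_2 n\rfloor + 1$. We have
\begin{align*}
\lfloor u_2 n\rfloor + 1 > u_2 n \ge v_1 n + 2 > \lceil v_1 n\rceil,
\end{align*}
so the two index sets are disjoint. Since the $X_t$ are iid, the increments are independent, and (ii) holds with $h_0=2/n$.

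For (iii), let $h\le h_0 = 2/n$. On each interval $[(k-1)/n, k/n]$, $S_n$ is linear with slope $\sqrt{n}X_k$. Hence for $u\in[0,1-h]$ the increment $S_n(u+h)-S_n(u)$ spans at most two consecutive linear pieces (three if $h=2/n$ exactly, which is harmless). This gives
\begin{align*}
|S_n(u+h)-S_n(u)| \le h\sqrt{n}\,\bigl(|X_{k}|+|X_{k+1}|+|X_{k+2}|\bigr), \qquad k=\lceil un\rceil.
\end{align*}
Writing $h\sqrt n = \sqrt h\sqrt{h/h_0}\,\sqrt{2}$, the task reduces to showing that
\begin{align*}
R' := \inf\Bigl\{K : \int_0^{1} \Psi_2\Bigl(\tfrac{\sqrt2\,(|X_k|+|X_{k+1}|+|X_{k+2}|)}{K}\Bigr)\,du \le 1\Bigr\}
\end{align*}
is a random variable with a tail bound that is uniform in $n$. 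Then $\omega_{\Psi_2}(h,S_n)\le \sqrt h\sqrt{h/h_0}\,R$ with $R=R'$. The integral equals an average over $k$, namely $\frac1n\sum_k \Psi_2(\cdot)$, up to boundary terms.

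For the tail of $R$, use Jensen/convexity to bound $(a+b+c)^2\le 3(a^2+b^2+c^2)$ and then Hölder to split the three indices. For $K=r$ it then suffices to control
\begin{align*}
P\Bigl(\tfrac1n\textstyle\sum_k \Psi_2\bigl(\sqrt{18}\,|X_k|/r\bigr) > 1\Bigr).
\end{align*}
By Markov's inequality this is at most $\E\Psi_2(\sqrt{18}|X_1|/r)$. Lemma \ref{lem:subgauss-shrink} with $p=1$ bounds this by $2/(r^2/(18\|X_1\|_{\Psi_2}^2)-1)$. Matching the constants to the stated $G(r)=2/(r^2/(32\tau^2)-1)$ only requires tracking the numerical factors, using $\|X_1\|_{\Psi_2}\le\tau$. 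This bound is uniform in $n$ and tends to $0$ as $r\to\infty$.

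The main obstacle is the bookkeeping in (iii). One has to handle $h$ close to $2/n$, where three pieces are involved, together with the boundary terms near $u=1$, and still obtain exactly the constant $32$. The Markov step itself is routine.
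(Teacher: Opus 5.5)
Your proposal is correct and follows essentially the paper's argument: bound the increment for $h\le h_0$ by $h\sqrt{n}$ times neighbouring $|X_k|$'s, control the resulting empirical Orlicz-type quantity by convexity, and finish with Markov's inequality and Lemma \ref{lem:subgauss-shrink} with $p=1$. The packaging differs slightly: the paper uses the empirical Orlicz norm $R^*$ of $X_1,\dots,X_n$ with $R=\sqrt{8}R^*$, whereas you split the three neighbours at the tail step. Your three-term bound also correctly covers all $h\in(1/n,2/n]$ (three pieces occur for every such $h$, not only $h=2/n$), a range the paper's proof, which treats only $h\le 1/n$, glosses over, and your constant $18<32$ already implies the stated $G$ without further bookkeeping.
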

\begin{proof}[Proof of Lemma \ref{lem:Sn-conditions-ii-iii}]
    Condition (ii) is obvious.    
    For condition (iii), let
    \begin{align*}
        R^* = \inf\left\{  r\,:\, \frac{1}{n} \sum_{i=1}^n \Psi_2\left(\frac{\|X_i\|}{r}\right) \leq 1\right\},
    \end{align*}
    with the convention $X_0=0$.
    For $h\leq \frac{1}{n}$, and $u\in [\frac{k}{n}, \frac{k+1}{n})$, the construction of $S_n$ yields $\|S_n(u+h)-S_n(u)\| \leq h \sqrt{n} \max(\|X_{k+2}\|,\, \|X_{k+1}\|)$.
    Thus, with the notational convention $X_{n+1}=X_n$ and $X_0=0$, and using $h_0=2/n$,
    \begin{align*}
        &\int_0^{1-h} \Psi_2\left( \frac{\|S_n(u+h)-S_n(u)\|}{\sqrt{8}\sqrt{h} \sqrt{\frac{h}{h_0}} R^*} \right)\, du \\
        &\leq \frac{1}{n} \sum_{k=0}^{n-1} \Psi_2\left( \frac{\sqrt{n} h}{\sqrt{8}\sqrt{h} \sqrt{\frac{h}{h_0}} R^*} \left(\|X_{k+2}\| \, +\, \|X_{k+1}\|\right) \right) \\
        &= \frac{1}{n} \sum_{k=0}^{n-1} \Psi_2\left( \frac{1}{2 R^*} \left(\|X_{k+2}\|\, +\, \|X_{k+1}\|\right) \right)  
        \qquad \leq 1.
    \end{align*}
    Hence, condition (iii) holds with $R=\sqrt{8} R^*$. 
    Moreover, 
    \begin{align*}
        P(R^*>r) 
        &\leq  P \left( \frac{1}{n} \sum_{i=1}^n \Psi_2\left(\frac{\|X_i\|}{r}\right) >\frac{1}{2}\right) \\
        &\leq 2\, \E \left[ \frac{1}{n} \sum_{i=1}^n \Psi_2\left(\frac{\|X_i\|}{r}\right)\right] \\
        &\leq \frac{2}{(\frac{r}{2\tau})^2-1}=:G(r) \overset{r\downarrow 0}{\longrightarrow} 0,
    \end{align*}
    by virtue of Lemma \ref{lem:subgauss-shrink}.
\end{proof}

\begin{proof}[Proof of Theorem \ref{thm:FCLT}]
    Since $\mathcal{X}$ is a Banach space of type 2, and the random variables $X_t$ have finite second moments, they satisfy the central limit theorem, see \cite{hoffmann-jorgensen_law_1976}.
    From condition (ii) of Theorem \ref{thm:FCLT} and the independence of $X_t$, we may readily conclude that $(S_n(u_i))_{i=1,\ldots, m} \wconv (W(u_i))_{i=1,\ldots, m}$ for any $u_1,\ldots, u_m\in[0,1]$.
    To establish tightness for the invariance principle, we use Theorem \ref{thm:convergence}, and conditions (ii) and (iii) therein hold by virtue of Lemma \ref{lem:Sn-conditions-ii-iii}.
    Moreover, observe that $S_n(u)-S_n(v)=\sum_{i=1}^n X_i w_{i,n}(u,v)$ for weights with $\sum_{i=1}^n w_{i,n}^2 \leq |u-v|$.
    Hence, via Lemma \ref{lem:subgauss-Banach},
    \begin{align*}
        \|S_n(u)-S_n(v)\|_{\Psi_2} \leq C \sqrt{\sum_{i=1}^n w_{i,n}(u,v)^2 \|X_i\|_{\Psi_2}^2} \leq C \tau \sqrt{|u-v|},
    \end{align*}
    for some $C=C(\mathcal{X})$.
    This establishes condition (i) of Theorem \ref{thm:convergence}.
    Hence, $\|S_n\|_{\B}$ is stochastically bounded.
    Since $\B\hookrightarrow B_{\Psi_2,\infty}^\rho$ is compact by Proposition \ref{prop:embedding}, the sequence $S_n$ is relatively compact in $B_{\Psi_2,\infty}^\rho$. 
    We obtain weak convergence by virtue of Prokhorov's Theorem.
\end{proof}

\begin{proof}[Proof of Theorem \ref{thm:convergence}]
    By Theorem \ref{thm:FCLT}, $S_n\wconv W$ in $C[0,1]$.
    For any $\delta>0$, the mapping $T_{[\delta,1)}:f\mapsto \sup_{h\in [\delta,1)} \omega_{\Psi_2}(h,f)/\sqrt{h}$ is a continuous functional on $C[0,1]$,
    and thus $T_{[\delta,1)}(S_n) \wconv T_{[\delta,1)}(W)$.
    Now define $T_{(0,\delta)}(f) = \sup_{h\in(0,\delta)} \omega_{\Psi_2}(h,f)/\sqrt{h}$, such that $
        |f|_{\B} = \max(T_{(0,\delta)}(f), T_{[\delta,1)}(f))$.
        
    In the proof Theorem \ref{thm:FCLT}, we established the sufficient conditions of Theorem \ref{thm:subgauss-int-concentrate} for some $\tau=K(\mathcal{X}) \|X\|_{\Psi_2}$. 
    Thus for any $t>\tau$, we find $\limsup_n P(T_{(0,\delta)}(S_n)>\tau+\epsilon) \to 0$ as $\delta\to 0$.
    On the other hand, $T_{[\delta,1)}(W)\to |W|_{\B}$ as $\delta\to 0$.
    Thus, for any $t>\tau$, 
    \begin{align*}
        \limsup_{n\to\infty} P\left( |S_n|_{\B} \geq t \right) 
        &\leq \limsup_{n\to\infty} P(T_{(0,\delta)}(S_n)\geq t) + \limsup_{n\to\infty}  P(T_{[\delta,1)}(S_n)\geq t) \\
        &\leq  P(T_{[\delta,1)}(W)\geq t) \overset{\delta\downarrow 0}{\longrightarrow} P(T_{(0,1)}(W)\geq t). 
    \end{align*}
    This establishes the thresholded weak convergence by the Portmanteau Lemma.

    Now consider the special case $\mathcal{X}=\R$ with the additional condition that $\E \exp(\lambda X_t)\leq \exp(\lambda^2\sigma^2/2)$. 
    It suffices to establish condition (i) of Theorem \ref{thm:convergence} with $\tau=\sigma \sqrt{8/3}$.
    With the same weights $w_{i,n}(u,v)$ as above, we find
    \begin{align*}
        \E \exp\left( \lambda [S_n(u)-S_n(v)]\right) 
        &= \E \exp\left(\lambda \sum_{i=1}^n w_{i,n}(u,v) X_i \right) 
        \;= \prod_{i=1}^n \E \exp\left(\lambda w_{i,n}(u,v)X_i\right) \\
        &\leq \exp\left( \frac{\sigma^2}{2}\sum_{i=1}^n w_{i,n}(u,v)^2\right) 
        \; \leq \exp\left(\frac{\sigma^2|u-v|}{2}\right).
    \end{align*}
    As in Remark \ref{rem:subgauss}, we conclude that $\|S_n(u)-S_n(v)\|_{\Psi_2} \leq \sigma\sqrt{8/3} \sqrt{|u-v|}$.
\end{proof}

\subsection*{Proof of Theorem \ref{eqn:CLT-discretized}: Discretized statistics}

First, note that 
\begin{align*}
    \|S_n-\widetilde{S}_n\|_{\Psi_2(dx)}
    &\leq \inf\left\{K>0\,:\,  \sum_{t=1}^n \int_{(t-1)/n}^{t/n} \Psi_2\left( \frac{\|S_n(u)-\widetilde{S}_n(u)\|}{K} \right)\, du \right\} \\
    &\leq \inf\left\{K>0\,:\, \frac{1}{n} \sum_{t=1}^n \Psi_2\left( \frac{\|X_t\|}{K\cdot \sqrt{n}} \right) \right\} \\
    &= \frac{1}{\sqrt{n}} \inf\left\{K>0\,:\, \frac{1}{n} \sum_{t=1}^n \Psi_2\left( \frac{\|X_t\|}{K} \right) \right\}.
\end{align*}
The latter infimum is an empirical Orlicz norm of iid random variables, which converges almost surely to $\|X\|_{\Psi_2(dP)}$, see \cite{mies_empirical_2025}.
Thus, 
\begin{align*}
    \limsup_{n\to\infty} \sqrt{n} \|S_n-\widetilde{S}_n\|_{\Psi_2(dx)} \leq \|X\|_{\Psi_2}\quad \text{almost surely},
\end{align*}
and hence
\begin{align}
    \limsup_{n\to\infty} \sqrt{n}\sup_{h\in(0,1)}\left|\omega_{\Psi_2}(h, S_n) -\omega_{\Psi_2}(h, \widetilde{S}_n) \right| \leq 2\|X\|_{\Psi_2}\quad \text{almost surely}. \label{eqn:interpolation-error-1}
\end{align}
As $\rho(1/n)\gg 1/\sqrt{n}$, we conclude that $D_{\rho,n}(\widetilde{S}_n) = D_{\rho,n}(S_n)+o(1)$.
Moreover, for any $\delta>0$,
\begin{align*}
    &|D_{\rho,n}(S_n)-D_{\rho}(S_n)| \\
    &\leq \sup_{h\in[\delta,1)} \left|\frac{\omega_{\Psi_2}(\frac{\lceil nh\rceil }{n}, S_n)}{\rho(\frac{\lceil nh\rceil }{n})} - \frac{\omega_{\Psi_2}(h, S_n)}{\rho(h)} \right| 
    + \sup_{h\leq \delta} \frac{\omega_{\Psi_2}(h, S_n)}{\rho(h)} \\
    &\leq  \sup_{h\in[\delta,1)} \left|\frac{\omega_{\Psi_2}(\frac{\lceil nh\rceil }{n}, S_n) - \omega_{\Psi_2}(h, S_n)}{\rho(\delta)}  \right|
    \\
    &\quad+  \sup_{h\in[\delta,1)} \frac{\omega_{\Psi_2}(h, S_n)}{\rho(h)} \left|\frac{\rho(h)-\rho(\frac{\lceil nh \rceil}{n})}{\rho(\delta)}  \right|
    + \sup_{h\leq \delta} \frac{\omega_{\Psi_2}(h, S_n)}{\rho(h)} \\
    &= \mathcal{O}_P\left(\frac{\sqrt{1/n}}{\rho(\delta)}\right) + o_P \left( \frac{1}{\rho(\delta)}\right) + \mathcal{O}_P\left(\frac{\sqrt{\delta}}{\rho(\delta)}\right) \qquad \longrightarrow 0.
\end{align*}
In the last step, we used \eqref{eqn:interpolation-error-1} for the first term, the regularity of $S_n$ for the third term, and the continuity of $\rho$ for the second term. 
Now, letting $\delta=\delta_n\to 0$ sufficiently slowly, the whole expression tends to zero.
This shows that $D_{\rho,n}(S_n)$ and $D_{\rho,n}(\widetilde{S}_n)$ have the same limit as $D_{\rho}(S_n)$, which converges to $D_\rho(W)$ by Theorem \ref{thm:FCLT}.
The same argument applies to the dyadic statistic, where $D_{\rho, \textsc{dyadic}}(S_n)$ converges as a continuous functional of $S_n$.

Now consider the critical case $\rho(h)=\sqrt{h}$. 
For any $\delta>0$, define 
\begin{align*}
    D_{\rho,n}^{(0,\delta)}(\widetilde{S_n}) 
    &= \max_{\substack{m=1,\ldots, n,\,\frac{m}{n}<\delta}} \omega_{\Psi_2}(\tfrac{m}{n}, \widetilde{S}_n)/\rho(\tfrac{m}{n}),\\
    D_{\rho,n}^{[\delta,1)}(\widetilde{S_n}) 
    &= \max_{\substack{m=1,\ldots, n,\, \frac{m}{n}\geq \delta}} \omega_{\Psi_2}(\tfrac{m}{n}, \widetilde{S}_n)/\rho(\tfrac{m}{n}) 
\end{align*}
and analogously $D_\rho^{(0,\delta)}$ and $D_{\rho}^{[\delta,1)}$.
Then $D_{\rho,n}^{[\delta,1)}(\widetilde{S}_n)\wconv D_{\rho}^{[\delta,1)}(W)$, which in turn converges to $D_\rho(W)$ as $\delta\to 0$. 
At the same time, for any $\kappa>2\|X\|_{\Psi_2}$,
\begin{align*}
    \limsup_{n\to \infty} P\left(D_{\rho,n}^{(0,\delta)}(\widetilde{S}_n)>\tau\right)
    &\leq \limsup_{n\to \infty} P\left(D_{\rho,n}^{(0,\delta)}(S_n)>\tau-\kappa\right) \\
    &\quad + \limsup_{n\to\infty} P\left(\sup_{h\in(0,1)}\left|\omega_{\Psi_2}(h,S_n)-\omega_{\Psi_2}(h, \widetilde{S}_n)\right|\big/\rho(1/n)>\kappa\right).
\end{align*}
The second term tends to zero by \eqref{eqn:interpolation-error-1}, and the first term tends to zero for any $\tau>K\|X\|_{\Psi_2}+\kappa$ as $\delta \to 0$, as shown in the proof of Theorem \ref{thm:convergence}. 
We may thus conclude the proof as in Theorem \ref{thm:convergence}.
The dyadic statistic may be handled analogously. \qed

\subsection*{Proof of Proposition \ref{prop:lb-signal-norm-2}}
	For any $K>0$, and $h\leq \min |I_k|/2$,
	\begin{align*}
		\int_{0}^{1-h} \Psi\left( \frac{|F(v+h)-F(v)|}{K \rho(h) } \right)\, dv 
		&\geq \left(\sum_{k=1}^m |I_k|/2\right) \Psi\left( \frac{\delta h}{K\rho(h)} \right) \geq mh \Psi\left( \frac{\delta h}{K\rho(h)} \right) , 
	\end{align*}
	which equals $1$ for $K=\delta h/\rho(h)/\Psi^{-1}(\frac{1}{mh})$.
	Hence,
	\begin{align*}
		|F|_{B^\rho_{\Psi,\infty}} 
		\geq \frac{\omega_{\Psi}(h, F)}{\rho(h)} 
		&\geq \frac{\delta h}{\rho(h) \Psi^{-1}(\tfrac{1}{mh})}.
	\end{align*}
	The special cases are a consequence of the identity $\Psi_p^{-1}(y) = \log(1+y)^{1/p}$, such that
	\begin{align*}
		|F|_{B_{\Psi_p,\infty}^{\rho_\nu}} 
			\geq 
		\delta \sqrt{h} \log(e/h)^{-\frac{1}{\nu}} \log(1+\tfrac{1}{mh})^{-\frac{1}{p}}  
			\geq 
		\delta \sqrt{h} |\log\tfrac{e}{h}|^{-\frac{1}{\nu}}|\log \tfrac{e}{mh}|^{-\frac{1}{p}}.
	\end{align*}
	The norms in $B^{\rho_\nu}_{\infty,\infty}$ and $B^{1/2}_{\Psi_2,\infty}$ are limiting cases as $p\to\infty$ and $\nu\to \infty$, respectively.
    \qed

\bibliography{literature.bib}
\bibliographystyle{apalike}
\end{document}